%%%%%%%%%%%%%%%%%%%%%%%%%
\documentclass[11pt,a4paper]{amsart}
\usepackage{amssymb,amsmath,epsfig,graphics,mathrsfs}

%%%%%%%%%%%%%%%%%%%%%%%%%%%%%
\usepackage{fancyhdr}
\pagestyle{fancy}
\fancyhead[RO,LE]{\small\thepage}
\fancyhead[LO]{\small \emph{\nouppercase{\rightmark}}}
\fancyhead[RE]{\small \emph{\nouppercase{\rightmark}}}
\fancyfoot[L,R,C]{}

% If you don't like the horizontal line beneath the page
% number and the section's title, change the above
% \renewcommand{\headrulewidth}{1pt}  to
% \renewcommand{\headrulewidth}{0pt}
% 0pt tells Latex not to draw a line.

%\usepackage{amssymb,amsmath}

\usepackage{hyperref}
\hypersetup{
%Colours links instead of ugly boxes
 colorlinks   = true,
%Colour for external hyperlinks
 urlcolor     = blue,
%Colour of internal links
 linkcolor    = blue,
%Colour of citations
 citecolor   = red ,
%Generate book marks
 bookmarksopen=true
}

%%%%%%%%%%%%%%%%%%%%%%%%%%%%%

\newtheorem{thrm}{Theorem}[section]
\newtheorem{lemma}[thrm]{Lemma}
\newtheorem{prop}[thrm]{Proposition}
\newtheorem{cor}[thrm]{Corollary}
\newtheorem{rmrk}[thrm]{Remark}
\newtheorem{dfn}[thrm]{Definition}

\setlength{\topmargin}{-0.2in}
\setlength{\oddsidemargin}{0.3in}
\setlength{\evensidemargin}{0.3in}
\setlength{\textwidth}{6.3in}
\setlength{\rightmargin}{0.7in}
\setlength{\leftmargin}{-0.5in}
\setlength{\textheight}{9.1in}

\begin{document}
% begin top matter
% ***************** macroes needed for this paper ************************

\newcommand{\SL}{\mathcal L^{1,p}( D)}
\newcommand{\Lp}{L^p( Dega)}
\newcommand{\CO}{C^\infty_0( \Omega)}
\newcommand{\Rn}{\mathbb R^n}
\newcommand{\Rm}{\mathbb R^m}
\newcommand{\R}{\mathbb R}
\newcommand{\Om}{\Omega}
\newcommand{\Hn}{\mathbb H^n}
\newcommand{\aB}{\alpha B}
\newcommand{\eps}{\ve}
\newcommand{\BVX}{BV_X(\Omega)}
\newcommand{\p}{\partial}
\newcommand{\IO}{\int_\Omega}
\newcommand{\bG}{\boldsymbol{G}}
\newcommand{\bg}{\mathfrak g}
\newcommand{\bz}{\mathfrak z}
\newcommand{\bv}{\mathfrak v}
\newcommand{\Bux}{\mbox{Box}}
\newcommand{\e}{\ve}
\newcommand{\X}{\mathcal X}
\newcommand{\Y}{\mathcal Y}
\newcommand{\W}{\mathcal W}
\newcommand{\la}{\lambda}
\newcommand{\La}{\Lambda}
\newcommand{\vf}{\varphi}
\newcommand{\rhh}{|\nabla_H \rho|}
\newcommand{\Ba}{\mathscr{B}_a}
\newcommand{\Za}{Z_\beta}
\newcommand{\ra}{\rho_\beta}
\newcommand{\na}{\nabla_\beta}
\newcommand{\vt}{\vartheta}
\newcommand{\G}{\Gamma}
\newcommand{\Ga}{\mathbb G}
\newcommand{\HHa}{\mathscr H_a}
\newcommand{\HH}{\mathscr H}
\newcommand{\paa}{z^a \p_z}

\numberwithin{equation}{section}

\newcommand{\RN} {\mathbb{R}^N}
\newcommand{\Sob}{S^{1,p}(\Omega)}
\newcommand{\Dxk}{\frac{\partial}{\partial x_k}}
\newcommand{\Co}{C^\infty_0(\Omega)}
\newcommand{\Je}{J_\ve}
\newcommand{\beq}{\begin{equation}}
\newcommand{\bea}[1]{\begin{array}{#1} }
\newcommand{\eeq}{ \end{equation}}
\newcommand{\ea}{ \end{array}}
\newcommand{\eh}{\ve h}
\newcommand{\Dxi}{\frac{\partial}{\partial x_{i}}}
\newcommand{\Dyi}{\frac{\partial}{\partial y_{i}}}
\newcommand{\Dt}{\frac{\partial}{\partial t}}
\newcommand{\aBa}{(\alpha+1)B}
\newcommand{\GF}{\psi^{1+\frac{1}{2\alpha}}}
\newcommand{\GS}{\psi^{\frac12}}
\newcommand{\HFF}{\frac{\psi}{\rho}}
\newcommand{\HSS}{\frac{\psi}{\rho}}
\newcommand{\HFS}{\rho\psi^{\frac12-\frac{1}{2\alpha}}}
\newcommand{\HSF}{\frac{\psi^{\frac32+\frac{1}{2\alpha}}}{\rho}}
\newcommand{\AF}{\rho}
\newcommand{\AR}{\rho{\psi}^{\frac{1}{2}+\frac{1}{2\alpha}}}
\newcommand{\PF}{\alpha\frac{\psi}{|x|}}
\newcommand{\PS}{\alpha\frac{\psi}{\rho}}
\newcommand{\ds}{\displaystyle}
\newcommand{\Zt}{{\mathcal Z}^{t}}
\newcommand{\XPSI}{2\alpha\psi \begin{pmatrix} \frac{x}{|x|^2}\\ 0 \end{pmatrix} - 2\alpha\frac{{\psi}^2}{\rho^2}\begin{pmatrix} x \\ (\alpha +1)|x|^{-\alpha}y \end{pmatrix}}
\newcommand{\Z}{ \begin{pmatrix} x \\ (\alpha + 1)|x|^{-\alpha}y \end{pmatrix} }
\newcommand{\ZZ}{ \begin{pmatrix} xx^{t} & (\alpha + 1)|x|^{-\alpha}x y^{t}\\
     (\alpha + 1)|x|^{-\alpha}x^{t} y &   (\alpha + 1)^2  |x|^{-2\alpha}yy^{t}\end{pmatrix}}
\newcommand{\norm}[1]{\lVert#1 \rVert}
\newcommand{\ve}{\varepsilon}
\newcommand{\Rnn}{\mathbb R^{n+1}}
\newcommand{\Rnp}{\mathbb R^{n+1}_+}
\newcommand{\B}{\mathbb{B}}
\newcommand{\Ha}{\mathbb{H}}
\newcommand{\xx}{\mathscr X}
\newcommand{\Sa}{\mathbb{S}}
\newcommand{\x}{\nabla_\mathscr X}
\newcommand{\LL}{\mathscr L}
\newcommand{\Ls}{(-\LL)^s}

\title[Some properties of sub-Laplaceans]
{Some properties of sub-Laplaceans}

\author{Nicola Garofalo}

\address{Dipartimento d'Ingegneria Civile e Ambientale (DICEA)\\ Universit\`a di Padova\\ Via Marzolo, 9 - 35131 Padova,  Italy}
\vskip 0.2in
\email{nicola.garofalo@unipd.it}

\thanks{The author was supported in part by a Progetto SID (Investimento Strategico di Dipartimento) ``Non-local operators in geometry and in free boundary problems, and their connection with the applied sciences", University of Padova, 2017.}

\dedicatory{Ad Anna Aloe, amica dolce e indimenticabile}

\maketitle

\tableofcontents

\section{Preamble}\label{S:pre}

This note is dedicated to the memory of my dearest friend Anna Salsa, n\'ee Aloe. I cannot speak of my deep connection with Anna without associating it to my friendship with Sandro, her husband and companion of more than forty years. I met Anna and Sandro for the first time in 1979 in Cortona, when I was a $o(1)$. I was sitting on the stairs outside the Oasi Neumann, idly playing my guitar, wasting time after a day of lectures. Anna and Sandro had just arrived in Cortona to visit Gene Fabes, one of the two lecturers of the summer school I was attending. They had recently returned from Minneapolis, where they had spent one milestone year. Gene had invited Sandro to work with him, and Anna went along taking a leave of absence from her job as a teacher. We became friends the moment we met...that late afternoon of almost forty years ago. Through the years our connection has increasingly deepened. Anna and Sandro became one of the key presences in my life, a certainty on which I could always lean on. Spending time at their home in Novara was literally like going home. Through the years Anna has been an incredibly unique friend. She had charm, intelligence, sense of humor and an exclusive way of connecting to people around her. I miss her deeply.  

\section{Introduction}\label{S:intro}

In this note I present some properties of sub-Laplaceans associated with a collection of $C^\infty$ vector fields $\mathscr X = \{X_1,...,X_m\}$ satisfying H\"ormander's finite rank assumption in $\Rn$. As it is well-known from the fundamental work \cite{H}, the sub-Laplacean associated with such system,
\begin{equation}\label{LL}
\mathscr L = - \sum_{i=1}^m X^\star_iX_i,
\end{equation}
is a second order hypoelliptic operator. Here, we have denoted by $X_i^\star$ the formal adjoint of the vector field $X_i$. The operator $-\LL$ is positive and in divergence form, and it admits a positive fundamental solution $\Gamma(x,y)$ which is $C^\infty$ outside the diagonal. We note explicitly that $\mathscr L$ is formally self-adjoint, and thus $\Gamma(x,y) = \Gamma(y,x)$. As it will clearly appear the three aspects that primarily enter into our considerations, following an approach that was proposed in the author's lecture notes of a 1991 summer school in Cortona \cite{Gcortona} are:
\begin{itemize}
\item divergence structure of $\mathscr L$;
\item hypoellipticity;
\item existence of a (smooth) strictly positive fundamental solution.
\end{itemize}
These three aspects have been extensively used in our previous joint works \cite{CGL}, \cite{CDGtorino}, \cite{CDGmz}, \cite{DG}. 

This note is organized as follows: in Section \ref{S:nsw} I recall a fundamental result of Nagel, Stein and Wainger in \cite{NSW} concerning the local size of the metric balls with respect to the distance naturally associated with  \eqref{LL}. In Section \ref{S:fs} I recall the size estimates of the fundamental solution independently established by Sanchez-Calle \cite{SC} and by Nagel, Stein, Wainger \cite{NSW}. In Section \ref{S:regpd} I introduce the regularized pseudo-distance, and discuss some of its key properties. Section \ref{S:cgl} covers some basic mean-value formulas first established in \cite{CGL}, and further exploited, among several other works, in \cite{CDGtorino} and \cite{CDGmz}. In Proposition \ref{P:smv} below I reformulate such formulas in terms of the intrinsic pseudo-distance \ref{rhoxy}. In Proposition \ref{P:BP} I show how such intrinsic mean-value formulas lead in a natural way to a potential-theoretic definition of the sub-Laplacean \eqref{LL} above which is akin to the classical approach based on the Blaschke-Privalov Laplacean, see e.g. \cite{DP}. In Section \ref{S:energy} I use the intrinsic mean-value operator to obtain an improved version of the Caccioppoli inequality in \cite{CGL} and also \cite{CDGcpde}. 

In Section \ref{S:fsl}, I introduce the notion of fractional sub-Laplacean $(-\LL)^s$, $0<s<1$ and discuss the extension problem for such nonlocal operator. Although our approach is classical, and goes back to the fundamental ideas of Bochner, and the subsequent work of Balakrishnan \cite{B}, our results are new and it is likely they will find application to other interesting situations. One should see, in this connection, the independent works by Nystr\"om and Sande in  \cite{NS} and by Stinga and Torrea in \cite{ST}, where the case of the standard heat equation is worked out. Also, the recent papers \cite{BG} and \cite{GFT} contain several computations which are quite relevant to the present note. Another relevant work is that of Ferrari and Franchi \cite{FF}, where the authors study fractional powers of sub-Laplaceans in Carnot groups taking as a starting point Folland's definition in \cite{F}. Our Section \ref{S:fsl} generalizes their results. At the onset, our definition of the fractional sub-Laplacean $(-\LL)^s$ in \eqref{flheat} below (based on Balakrishnan's formula) is seemingly different from that in \cite{FF}, based on the Folland's Riesz kernels in \cite{F}. However, in Lemma \ref{L:yes} we recognize that, in fact, in a Carnot group the two definitions are the same. A more substantial difference is that the work \cite{FF} relies on many explicit computations which are not possible in our general setting. In Section \ref{S:parext} I introduce the extension operator $\HH_a$ for the fractional powers of the heat operator $(\p_t - \HH)^s$, see \eqref{extFrHeat} below, with the intent of constructing its Poisson kernel, see Definition \ref{D:extheatpoisson}. Proposition \ref{P:eqheatPK} contains an important property of the latter. Finally, in Section \ref{S:extpb} I solve the extension problem (see Definition \ref{D:ep}) for the operator $(-\LL)^s$. One essential tool is the Poisson kernel, which I construct using its parabolic counterpart. The main results in this section are Propositions \ref{P:UsolvesextL2}, \ref{P:Usolvesext} and \ref{P:trace}.  

While most of the results in the present paper (with the exclusion of Sections \ref{S:fsl}, \ref{S:parext} and  \ref{S:extpb}, which are nonlocal in nature) are of a purely local nature and no geometry is involved, it is nonetheless interesting to study to which extent they continue to hold globally in the presence of suitable curvature assumptions. For instance, one could think that \eqref{LL} above is a diffusion operator on a sub-Riemannian manifold and that a suitable Ricci lower bound condition is assumed in the form of those introduced in \cite{BGjems}. We plan to come back to some of these challenging aspects in the future.

\medskip

\noindent \textbf{Acknowledgment:} I would like to thank Agnid Banrjee, Isidro Munive, Duy-Minh Nhieu and Giulio Tralli for their interest in the present note and for many stimulating discussions. In particular, A. Banerjee has kindly helped with part of the proof of Proposition 8.16. I also thank Bruno Franchi for some interesting feedback.  

\section{The size of the metric balls}\label{S:nsw}

In $\Rn$ with $n\ge 3$ we consider a family of $C^\infty$ vector fields $\mathscr X = \{X_1,...,X_m\}$ satisfying H\"ormander's finite rank assumption in $\Rn$
\[
\operatorname{rank}\operatorname{Lie}[X_1,...,X_m](x) = n,
\]
at every $x\in \Rn$. This condition means that at every point of $\Rn$ the vector fields and a sufficiently large number of their commutators 
\[
X_{j_1}, [X_{j_1},X_{j_2}], [X_{j_1},[X_{j_2},X_{j_3}]],...,[X_{j_1},[X_{j_2},[X_{j_3},...,X_{j_k}]]],...,\ \ \ j_i = 1,...,m,
\]
generate the whole of $\Rn$, i.e., the tangent space. In other words, at every point of $\Rn$ among such differential operators there exist $N$ which are linearly independent. 
Following \cite{NSW} we denote by $Y_1,...,Y_\ell$ the collection of the $X_j$'s and of those commutators which are needed to generate $\Rn$. A ``degree" is assigned to each $Y_i$, namely the corresponding order of the commutator. If $I = (i_1,...,i_n), 1\leq i_j\leq \ell$, is a $n$-tuple of integers,  one defines 
\[
d(I) = \sum_{j=1}^n \text{deg}(Y_{i_j}),\ \ \ \ \text{and}\ \  a_I(x) = \text{det}[Y_{i_1},...,Y_{i_n}]. 
\]

\begin{dfn}\label{D:nswpol}
The \emph{Nagel-Stein-Wainger polynomial} based at a point $x\in \Rn$ is defined by
\begin{equation*}
\Lambda(x,r) = \sum_I\ |a_I(x)| r^{d(I)}, \quad\quad\quad\quad\quad\quad r > 0.
\end{equation*}
\end{dfn}
For a given bounded open set $U\subset \Rn$, we let
\begin{equation}\label{Q}
Q = \text{sup}\ \{d(I)\mid \ |a_I(x)| \ne 0,\ x\in U\},\quad\quad Q(x) = \text{inf}\ \{d(I)\mid |a_I(x)| \ne 0,\ x\in U\},
\end{equation}
and notice that from the work in \cite{NSW} we know 
\begin{equation}\label{Qs}
3 \leq n \leq Q(x) \leq Q.
\end{equation} 
We respectively call the numbers $Q$ and $Q(x)$ the \emph{homogeneous dimension} of $\mathscr X$ relative to $U$, and the \emph{pointwise homogeneous dimension} of $\mathscr X$ at $x$ relative to $U$.
From Definition \ref{D:nswpol}, \eqref{Q} and \eqref{Qs}, it is clear that for every $x\in \Rn$ and $r>0$ we can write
\begin{equation}\label{nswpol}
\Lambda(x,r) = a_{Q(x)} r^{Q(x)} + ... + a_Q r^{Q}.
\end{equation}

We next recall the notion of control, or Carnot-Carath\'eodory distance associated with $\mathscr X$, see \cite{NSW}. A piecewise $C^1$ curve $\gamma:[0,T]\to \Rn$ is called \emph{subunitary} if there exist piecewise continuous functions $a_i:[0,T]\to \Rn$ with $\sum_{i=1}^m |a_i| \le 1$ such that
\[
\gamma'(t)=\sum_{i=1}^m a_i(t)X_i(\gamma(t)),
 \]
whenever $\gamma'(t)$ is defined.
We define the \emph{subunitary length} of $\gamma$ as $\ell_\mathscr X(\gamma) = T$. Given two points $x, y \in \Rn$ denote by  $\mathscr S(x,y)$ the collection of all subunitary curves $\gamma:[0,T]\to \Rn$ such that $\gamma(0)=x$ and $\gamma(T)=y$. By the theorem of Chow-Rashevsky we know that $\mathscr S(x,y)\not= \varnothing$ for every $x, y\in \Rn$. We define the control distance as follows
\[
d(x,y)=\underset{\gamma\in \mathscr S(x,y)}{\inf} \ell_\mathscr X(\gamma).
\]
It is well-known that $d(x,y)$ is an actual distance. The metric ball centered at $x$ with radius $r>0$ will be denoted by $B(x,r) = \{y\in \Rn\mid d(y,x)<r\}$. 
One of the fundamental results in \cite{NSW} is the following.

\begin{thrm}[Size of the metric balls]\label{T:nsw}
Given a bounded set $U\subset \Rn$, there exist $C = C(U,\mathscr X)>0$ and $R_0 = R_0(U,\mathscr X)>0$ such that for every $x\in U$ and $0<r<R_0$ one has
\[
C \La(x,r) \le |B(x,r)| \le C^{-1} \La(x,r).
\]
In particular, there exists $C_d = C_d(U,\mathscr X)>0$ such that for every $x\in U$ and $0<r<R_0/2$
\[
|B(x,2r)| \le C_d |B(x,r)|.
\]
\end{thrm}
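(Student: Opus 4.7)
\medskip

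\textbf{Proof proposal.} The doubling statement follows at once from the first assertion, since by \eqref{nswpol} the polynomial $\La(x,r)$ has nonnegative coefficients and degree at most $Q$ uniformly for $x\in U$, hence $\La(x,2r)\le 2^Q \La(x,r)$; applying the volume comparison at radii $2r$ and $r$ yields $|B(x,2r)|\le C^{-2}\,2^Q\,|B(x,r)|$. The real content is therefore the two-sided comparison $|B(x,r)|\asymp \La(x,r)$.

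I would approach this via the exponential box construction of Nagel--Stein--Wainger. For each pair $(x,r)$ with $x\in U$ and $0<r<R_0$, select an $n$-tuple $I=I(x,r)$ for which the monomial $|a_I(x)|r^{d(I)}$ is, up to a dimensional constant, maximal among the contributing monomials in \eqref{nswpol}; this single term is then comparable to $\La(x,r)$ itself. Since $a_I(x)\ne 0$, the vectors $Y_{i_1}(y),\ldots,Y_{i_n}(y)$ form a basis of $\Rn$ for $y$ in a neighborhood of $x$, and one introduces the exponential box
\[
\operatorname{Box}_I(x,r) \;=\; \Big\{\exp\!\Big(\sum_{j=1}^n t_j\, Y_{i_j}\Big)(x)\ :\ |t_j|<r^{\deg(Y_{i_j})}\Big\}.
\]
The heart of the argument is to establish, with constants depending only on $U$ and $\mathscr X$, both a sandwich $\operatorname{Box}_I(x,c_1 r)\subset B(x,r)\subset \operatorname{Box}_I(x,c_2 r)$ and a volume identity $|\operatorname{Box}_I(x,r)|\asymp |a_I(x)|r^{d(I)}$.

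The inclusion $\operatorname{Box}_I(x,c_1 r)\subset B(x,r)$ is the soft direction: one argues by induction on the commutator degree that the flow at time $t$ of a commutator of order $k$ is reachable by a subunit curve in time $O(t^{1/k})$, so that the whole box is subunit-reachable in time $\lesssim r$. For the volume identity, the Jacobian of the exponential parametrisation at the origin is exactly $a_I(x)$, and its Taylor expansion produces precisely the higher-degree determinants $a_{I'}(x)$ with $d(I')>d(I)$; the near-maximality of the chosen $I$ forces these corrections to be dominated by $|a_I(x)|$ times a uniformly bounded factor throughout the box, whence the volume claim follows by direct integration.

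The principal obstacle is the reverse inclusion $B(x,r)\subset \operatorname{Box}_I(x,c_2 r)$, i.e.\ the fact that every subunit curve issued from $x$ remains inside a controlled exponential box. This is handled by expressing the curve in the coordinates supplied by $\exp$ applied to $Y_{i_1},\ldots,Y_{i_n}$, deriving the ODE satisfied by its components, and then bootstrapping a priori bounds through a hierarchy of scales. It is precisely here that the uniform choice of $R_0$ and the dependence of all constants solely on $(U,\mathscr X)$ are genuinely tested; one must be careful that the particular multi-index $I$ changes with the scale $r$ and yet the constants remain uniform. Once both comparisons are in hand, the two-sided bound on $|B(x,r)|$, and hence the theorem, follow immediately.
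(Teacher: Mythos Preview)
The paper does not actually prove this theorem: it is quoted as ``one of the fundamental results in \cite{NSW}'' and used throughout as a black box, with no argument supplied. There is therefore no in-paper proof to compare your proposal against.

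That said, your sketch is faithful to the original Nagel--Stein--Wainger argument: selecting a near-maximal monomial $|a_I(x)|r^{d(I)}$, comparing the metric ball with the exponential box $\operatorname{Box}_I(x,r)$ built from the corresponding $Y_{i_j}$'s, computing the box volume via the Jacobian of the exponential map, and identifying the reverse inclusion $B(x,r)\subset\operatorname{Box}_I(x,c_2 r)$ as the genuinely delicate step. Your deduction of the doubling inequality from the two-sided estimate and the structure \eqref{nswpol} of $\La(x,r)$ is correct and is exactly how the ``in particular'' clause should be read. If you intend this as more than an outline, the step that needs actual work is the uniform control of the ODE coefficients in the chosen exponential coordinates, and the verification that the constants are stable as the optimal multi-index $I(x,r)$ jumps with $r$; in the NSW paper this occupies the bulk of the effort and is handled through a careful Campbell--Hausdorff analysis together with a compactness argument on $\overline U$.
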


We list for future use the following well-known consequence of the doubling condition in Theorem \ref{T:nsw}.

\begin{cor}\label{C:dc}
Given a bounded set $U\subset \Rn$, there exist $C = C(U,\mathscr X)>0$ and $R_0 = R_0(U,\mathscr X)>0$ such that, with
\[
Q = \log_2 C_d,
\]
one has for every $x\in U$ and any $0<r<R\le R_0$ 
\begin{equation}\label{dcallscales}
|B(x,R)| \le C_d \left(\frac Rr\right)^Q |B(x,r)|.
\end{equation}
\end{cor}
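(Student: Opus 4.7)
The plan is to derive \eqref{dcallscales} by iterating the doubling inequality from Theorem \ref{T:nsw}. The only subtlety is that the doubling bound $|B(x,2r)|\le C_d|B(x,r)|$ is stated for $r<R_0/2$, whereas we want the conclusion for $R\le R_0$. This is easily absorbed by passing, if necessary, to a smaller threshold (still denoted by $R_0$) so that all radii appearing in the iteration lie in the range where doubling is available.

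First, fix $x\in U$ and $0<r<R\le R_0$, and let $k\ge 1$ be the unique integer with
\[
2^{k-1}r < R \le 2^k r.
\]
By the monotonicity of the measure of balls one has $|B(x,R)| \le |B(x,2^k r)|$. Choosing the threshold $R_0$ small enough so that $2^{j}r<R_0/2$ for $j=0,1,\dots,k-1$ (which is ensured by halving the $R_0$ of Theorem \ref{T:nsw}), one may apply the doubling inequality successively at the radii $r,2r,4r,\dots,2^{k-1}r$ to obtain
\[
|B(x,2^k r)| \le C_d\,|B(x,2^{k-1}r)| \le C_d^2\,|B(x,2^{k-2}r)| \le \cdots \le C_d^k\,|B(x,r)|.
\]

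Second, since by definition $Q=\log_2 C_d$, one has $C_d^k = 2^{kQ}$. From $2^{k-1}<R/r$ we deduce $2^k < 2R/r$, hence
\[
C_d^k = 2^{kQ} < \left(\frac{2R}{r}\right)^Q = 2^Q\left(\frac{R}{r}\right)^Q = C_d\left(\frac{R}{r}\right)^Q.
\]
Combining the two chains of inequalities yields
\[
|B(x,R)| \le |B(x,2^k r)| \le C_d^k\,|B(x,r)| \le C_d\left(\frac{R}{r}\right)^Q|B(x,r)|,
\]
which is \eqref{dcallscales}.

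There is no real obstacle here; this is a standard consequence of the doubling property. The only point that requires a moment's care is verifying that every radius entering the geometric iteration remains in the interval where Theorem \ref{T:nsw} guarantees doubling, and this is handled merely by passing to a slightly smaller $R_0$, which is permissible since the statement of the corollary allows us to choose $C$ and $R_0$ freely.
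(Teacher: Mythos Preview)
Your argument is correct and is precisely the standard iteration of the doubling inequality that the paper has in mind; the paper itself does not spell out a proof but merely records the corollary as a ``well-known consequence'' of Theorem~\ref{T:nsw}. Your handling of the threshold---halving the $R_0$ from Theorem~\ref{T:nsw} so that every dyadic radius in the chain remains in the doubling range---is exactly the right way to make the iteration rigorous.
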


%%%%%%%%%%%%%%%%%%%%%%%%%%%%%%%%

\section{Size of the fundamental solution of a sub-Laplacean}\label{S:fs}

Throughout this note we will use the notation
\[
\x u = (X_1 u,...,X_m u)
\] 
to indicate the degenerate gradient of a function $u$ with respect to the family $\mathscr X$. We let
\[
|\x u|^2 = \sum_{i=1}^m (X_i u)^2.
\]
 
Consider now the \emph{sub-Laplacean} associated with such family of vector fields
\[
\mathscr L = - \sum_{i=1}^m X^*_iX_i.
\]
According to Hormander's theorem in \cite{H} the operator $\mathscr L$ is hypoelliptic, i.e., distributional solutions of $\mathscr L u = f$ are $C^\infty$ wherever such is $f$.

Denote by $\Gamma(x,y) = \Gamma(y,x)$ a positive
fundamental solution of $-\mathscr L$ in $\Rn$. We clearly have $\Gamma(x,\cdot)\in C^\infty(\Rn\setminus\{x\})$. The following size estimates of $\Gamma$ were obtained independently by A. Sanchez-Calle \cite{SC}, and by Nagel, Stein and Wainger \cite{NSW}.
\medskip
\begin{thrm}\label{T:estimatesFS}
 Given a bounded set $U\subset \Rn$, there exists $R_0 = R_0(U,\mathscr X)>0$, such that for $x\in U,\ 0<d(x,y)\leq R_0$, one has for $s\in \mathbb{N}\cup\{0\}$, and for some constant $C=C(U,\mathscr X, s) >0$
\begin{align}\label{gradgamma}
& |X_{j_1}X_{j_2}...X_{j_s}\Gamma(x,y)| \leq C^{-1} \frac{d(x,y)^{2-s}}{|B(x,d(x,y))|},\ \ \ \ \ \ \Gamma(x,y) \geq  C \frac{d(x,y)^2}{|B(x,d(x,y))|}.
\end{align}
In the first inequality in \eqref{gradgamma}, one has $j_i\in \{1,...,m\}$ for $i=1,...,s$, and $X_{j_i}$ is allowed to act on either $x$ or $y$.
\end{thrm}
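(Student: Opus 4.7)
The cleanest route is through the heat semigroup of $\LL$. Let $p_t(x,y)$ denote the Schwartz kernel of $e^{t\LL}$; by hypoellipticity of $\p_t - \LL$, this kernel is smooth on $(0,\infty)\times\Rn\times\Rn$, and
\[
\Gamma(x,y)\ =\ \int_0^\infty p_t(x,y)\,dt.
\]
I would reduce both the upper and lower bounds for $\Gamma$ to pointwise estimates on $p_t$ and its $\mathscr X$-derivatives, and then integrate in time.

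The analytic engine is the Gaussian-type heat kernel bound, which I would invoke rather than reprove: there exist $C,c,R_0>0$ depending on $U$ and $\mathscr X$ such that, for $x,y\in U$ and $0<t\leq R_0^2$,
\[
\frac{c}{|B(x,\sqrt t)|}\, e^{-d(x,y)^2/(ct)}\ \leq\ p_t(x,y)\ \leq\ \frac{C}{|B(x,\sqrt t)|}\, e^{-c\,d(x,y)^2/t},
\]
together with the derivative estimates
\[
|X_{j_1}\cdots X_{j_s} p_t(x,y)|\ \leq\ \frac{C\, t^{-s/2}}{|B(x,\sqrt t)|}\, e^{-c\,d(x,y)^2/t}.
\]
These are the content of the deep work of Jerison, Sanchez-Calle, and Nagel-Stein-Wainger; their proof, via the Rothschild-Stein lifting to a free nilpotent group followed by a parametrix construction with polynomial gain, is the true obstacle of the entire enterprise. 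Granting them, what remains reduces to bookkeeping with Theorem \ref{T:nsw} and Corollary \ref{C:dc}.

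For the lower bound, set $r=d(x,y)$ and integrate only over $t\in[r^2,4r^2]$. Doubling gives $|B(x,\sqrt t)|\leq 2^Q C_d\, |B(x,r)|$ on this interval, and the Gaussian lower bound then yields $p_t(x,y)\geq c'/|B(x,r)|$ uniformly, so that $\Gamma(x,y)\geq 3c' r^2/|B(x,r)|$. For the upper bound on $|X_{j_1}\cdots X_{j_s}\Gamma(x,y)|$, differentiate under the integral sign and split the time range into $t<r^2$, $r^2\leq t\leq R_0^2$, and $t>R_0^2$. On the short-time regime the Gaussian factor $e^{-cr^2/t}$ absorbs the singularity $t^{-s/2}$: after the change of variables $u=r^2/t$ and the polynomial control of $|B(x,r)|/|B(x,r/\sqrt u)|$ by $u^{Q/2}$ from Corollary \ref{C:dc}, the resulting integral in $u$ is absolutely convergent and contributes at most a multiple of $r^{2-s}/|B(x,r)|$. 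On the intermediate regime the Gaussian is $O(1)$ and the polynomial lower bound for $|B(x,\sqrt t)|$ in terms of $|B(x,r)|$ (Corollary \ref{C:dc} again) yields the same order. The tail $\int_{R_0^2}^\infty p_t\,dt$ is controlled by the global decay of $p_t$ on the bounded set $U$, either through a spectral argument on a slightly larger compactly contained domain or by direct estimates of the Dirichlet heat kernel; in any case it contributes at most a lower-order term. The symmetry $\Gamma(x,y)=\Gamma(y,x)$ then handles derivatives in the $y$ variable.

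The main difficulty is therefore not the passage from $p_t$ to $\Gamma$ but the Gaussian heat-kernel bounds themselves; once those are in hand, the argument above converts them into the stated pointwise estimates on $\Gamma$ and its $\mathscr X$-derivatives by elementary manipulations involving only the doubling property of the metric balls.
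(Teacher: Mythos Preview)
The paper does not actually prove this theorem: it is stated as a result quoted from the literature, attributed to Sanchez-Calle \cite{SC} and Nagel--Stein--Wainger \cite{NSW}, with no argument given. So there is no ``paper's own proof'' to compare against.

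Your sketch via time-integration of Gaussian heat kernel bounds is a correct and standard alternative derivation; the bounds you invoke are precisely those recorded later in the paper as Theorem~\ref{T:KS}. Two remarks are worth making. First, historically the route is the reverse of what you describe: the fundamental-solution estimates in \cite{SC} and \cite{NSW} were obtained directly by parametrix constructions (Rothschild--Stein lifting and approximation by nilpotent groups), and the parabolic Gaussian bounds of Jerison--Sanchez-Calle came afterward, building on that same machinery. So your reduction does not really bypass the hard analysis---as you yourself acknowledge. Second, the formula $\Gamma(x,y)=\int_0^\infty p_t(x,y)\,dt$ and the control of the tail $\int_{R_0^2}^\infty p_t\,dt$ presuppose a globally defined, well-behaved heat kernel; the paper secures this only in Section~\ref{S:fsl} by assuming $\mathscr L=\Delta$ outside a large compact set. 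For the purely local statement of Theorem~\ref{T:estimatesFS} as written, the original parametrix arguments are more self-contained than the heat-kernel route.
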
 

%%%%%%%%%%%%%%%%%%%%%%%%%%%%%%%%%%%%%%%%%%

\section{The regularized pseudo-distance}\label{S:regpd}

Next, we want to express the estimates \eqref{gradgamma} in a more intrinsic fashion.

\begin{dfn}\label{D:mod}
For every $x\in \Rn$ we introduce the \emph{modified polynomial of Nagel, Stein and Wainger} as the function $r\to E(x,r)$ defined by
\[
E(x,r) = \frac{\La(x,r)}{r^2}.
\]
\end{dfn}

The connection between the pointwise homogeneous dimension at $x$ and the asymptotic behavior of $E(x,\cdot)$ is expressed by the following result.

\begin{lemma}\label{L:E}
For any $x\in \Rn$ one has
\begin{equation}\label{quack}
\underset{r\to 0^+}{\lim} \frac{\log E(x,r)}{\log r}  = Q(x) - 2.
\end{equation}
\end{lemma}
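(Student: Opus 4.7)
The plan is to exploit the explicit polynomial structure of $\Lambda(x,r)$ given by \eqref{nswpol} and read off the asymptotic behavior of $E(x,r)$ as $r \to 0^+$. By definition of $Q(x)$ as the infimum in \eqref{Q}, the coefficient of the lowest-order term $r^{Q(x)}$ in \eqref{nswpol} is strictly positive at the fixed point $x$, so I would denote it by $c(x) = a_{Q(x)}(x) > 0$.

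First, I would divide \eqref{nswpol} by $r^2$ to obtain
\[
E(x,r) = \frac{\Lambda(x,r)}{r^2} = c(x)\, r^{Q(x)-2} + \sum_{k > Q(x)}^{Q} a_k(x)\, r^{k-2},
\]
and then factor the dominant power:
\[
E(x,r) = r^{Q(x)-2}\bigl(c(x) + R(x,r)\bigr), \qquad R(x,r) := \sum_{k > Q(x)}^{Q} a_k(x)\, r^{k-Q(x)}.
\]
Since every exponent $k - Q(x)$ in $R(x,r)$ is strictly positive, we have $R(x,r) \to 0$ as $r \to 0^+$, so $c(x) + R(x,r) \to c(x) > 0$.

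Next I would take logarithms, obtaining
\[
\log E(x,r) = (Q(x)-2)\log r + \log\bigl(c(x) + R(x,r)\bigr),
\]
and divide by $\log r$. The second term on the right is bounded as $r \to 0^+$, while $\log r \to -\infty$; thus
\[
\frac{\log E(x,r)}{\log r} = (Q(x)-2) + \frac{\log\bigl(c(x)+R(x,r)\bigr)}{\log r} \longrightarrow Q(x)-2,
\]
which is the desired conclusion \eqref{quack}.

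There is no real obstacle here: the only subtlety is to observe that $a_{Q(x)}(x) > 0$ by the very definition of $Q(x)$ in \eqref{Q}, so that $\log(c(x) + R(x,r))$ is well-defined and bounded in a neighborhood of $r=0$. Everything else is elementary manipulation of the polynomial \eqref{nswpol}.
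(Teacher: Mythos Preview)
Your proof is correct, and the underlying idea---that the lowest-degree term in \eqref{nswpol} dominates as $r\to 0^+$---is the same one the paper uses. The execution differs slightly: the paper first applies de l'Hospital's rule to rewrite $\dfrac{\log E(x,r)}{\log r}$ as $\dfrac{r\,E'(x,r)}{E(x,r)}$, and then factors the dominant power $r^{Q(x)-2}$ out of both $E$ and $E'$ to evaluate the limit. You instead factor $r^{Q(x)-2}$ directly out of $E(x,r)$, take logarithms, and observe that the bounded additive term $\log\bigl(c(x)+R(x,r)\bigr)$ is killed by the $1/\log r$ factor. Your route is marginally more elementary since it sidesteps l'Hospital altogether; the paper's route, on the other hand, makes the quantity $\dfrac{r\,E'(x,r)}{E(x,r)}$ appear explicitly, which is convenient because this same ratio shows up again in Lemma~\ref{L:simple} immediately afterward.
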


\begin{proof}
We notice that de l'Hospital rule gives
\[
\underset{r\to 0^+}{\lim} \frac{\log E(x,r)}{\log r}  = \underset{r\to 0^+}{\lim} \frac{r E'(x,r)}{E(x,r)}.
\]
The claim \eqref{quack} is now easily obtained by this observation and by \eqref{nswpol}, which gives
\begin{align*}
\underset{r\to 0^+}{\lim} \frac{r E'(x,r)}{E(x,r)} & = (Q(x) - 2) \underset{r\to 0^+}{\lim} \frac{a_{Q(x)} + ... + (Q-2)/(Q(x)-2) a_Q r^{Q-Q(x)}}{a_{Q(x)} + ... + a_Q r^{Q-Q(x)}} = Q(x) - 2.
\end{align*}

\end{proof}

From \eqref{Q}, \eqref{Qs} and \eqref{nswpol} we also obtain the following simple, yet important property.

\begin{lemma}\label{L:simple}
Given a bounded set $U\subset \Rn$, there exist $C_2, R_0>0$, depending on $U$ and $\mathscr X$, such that for every $x\in U$ and $0<r<R_0$ one has
\[
C_2 \le \frac{r E'(x,r)}{E(x,r)} \le C^{-1}_2.
\]
\end{lemma}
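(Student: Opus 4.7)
The plan is to exploit the explicit polynomial structure of $\Lambda(x,r)$ from \eqref{nswpol}, which makes $rE'(x,r)/E(x,r)$ a weighted average of a finite set of exponents.

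First I would group the terms of $\Lambda(x,r)$ by degree. Writing $\beta_k(x) = \sum_{I : d(I) = k} |a_I(x)| \ge 0$ for integers $Q(x) \le k \le Q$, formula \eqref{nswpol} and Definition \ref{D:mod} yield
\begin{equation*}
E(x,r) = \sum_{k=Q(x)}^{Q} \beta_k(x)\, r^{k-2}, \qquad r E'(x,r) = \sum_{k=Q(x)}^{Q} (k-2)\,\beta_k(x)\, r^{k-2}.
\end{equation*}
By the very definition of $Q(x)$ in \eqref{Q}, the leading coefficient $\beta_{Q(x)}(x)$ is strictly positive, and all other $\beta_k(x)$ are nonnegative, so the denominator $E(x,r)$ is strictly positive for every $r>0$.

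Next I would observe that the ratio $r E'(x,r)/E(x,r)$ is then a convex combination of the values $\{k-2 : Q(x) \le k \le Q\}$, with nonnegative weights $\beta_k(x)\,r^{k-2}/E(x,r)$ summing to $1$. Consequently
\begin{equation*}
Q(x) - 2 \;\le\; \frac{r E'(x,r)}{E(x,r)} \;\le\; Q - 2,
\end{equation*}
for every $x\in U$ and every $r>0$. Invoking \eqref{Qs}, we have $Q(x) - 2 \ge n - 2 \ge 1$, while the upper bound $Q - 2$ depends only on $U$ and $\mathscr X$. Hence the choice $C_2 = 1/(Q-2)$ (together with any $R_0>0$, e.g. the $R_0$ furnished by Theorem \ref{T:nsw}) gives the claim.

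There is no real obstacle here: the point of the argument is simply to recognize the logarithmic derivative of a polynomial with nonnegative coefficients as a convex combination of its exponents. The only thing to be careful about is that the bound must be \emph{uniform} in $x \in U$, which is automatic since the lower exponent $Q(x) - 2$ is bounded below by $n - 2$ and the upper exponent $Q - 2$ is a finite constant attached to $U$.
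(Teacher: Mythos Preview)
Your argument is correct and is exactly the computation the paper has in mind: the text simply asserts that the lemma follows ``from \eqref{Q}, \eqref{Qs} and \eqref{nswpol}'' without spelling it out, and your convex-combination reading of the logarithmic derivative of $E(x,r)$ is precisely the way to unpack that reference. The resulting bounds $n-2 \le r E'(x,r)/E(x,r) \le Q-2$ are uniform in $x\in U$ and in $r>0$, so your observation that any $R_0>0$ works is also on point.
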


It is clear from \eqref{Qs} and \eqref{nswpol} that $E(x,\cdot)$ is strictly increasing, and therefore it is invertible on its domain. We denote its inverse by
\[
F(x,\cdot) = E(x,\cdot)^{-1},
\]
so that
\[
F(x,E(x,r)) = r,\ \ \ \ \ \ \ \ \ E(x,F(x,r)) = r.
\]
Using the function $E(x,\cdot)$ we can express the size estimate for $\Gamma(x,y)$ in \eqref{gradgamma} in the following way
\begin{equation}\label{segamma}
 \frac{C}{E(x,d(x,y))} \le \Gamma(x,y) \le \frac{C^{-1}}{E(x,d(x,y))}.
 \end{equation} 

\begin{dfn}\label{D:reg}
For a fixed point $x\in \Rn$ we define the \emph{regularized pseudo-distance} centered at $x$ as
\begin{equation}\label{rhoxy}
\rho_x(y) = \begin{cases}
F(x,\Gamma(x,y)^{-1}),\ \ \ \ \ y \not= x,
\\
0\ \ \ \ \ \ \ \ \ \ \ \ \ \ \ \ \ \ \ \ \ \ \ y = x.
\end{cases}
\end{equation}
\end{dfn}

It is worth observing explicitly that applying the function $E(x,\cdot)$ to both sides of \eqref{rhoxy} we obtain for any $y\not= x$
\begin{equation}\label{gammaE}
\Gamma(x,y) = \frac{1}{E(x,\rho_x(y))}.
\end{equation}

\begin{prop}\label{P:goodrho}
One has $\rho_x\in C^\infty(\Rn\setminus\{x\})\cap C(\Rn)$. Moreover, given a bounded set $U\subset \Rn$, there exist positive numbers $C, R_0$, and $a\geq 1$, depending on $U$ and $\mathscr X$, such that for every $x\in U$, and every $y\in B(x,R_0)$, one has
\begin{equation}\label{F2}
a^{-1} d(x,y) \leq  \rho_x(y) \leq  a d(x,y),
\end{equation}
\begin{equation}\label{F3}
|\x \rho_x(y)| \leq C.
\end{equation}
\end{prop}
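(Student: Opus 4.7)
The plan rests on three ingredients: the polynomial expression \eqref{nswpol} together with Lemma \ref{L:simple} ensures that $r\mapsto E(x,r)$ is a smooth strictly increasing bijection of $(0,\infty)$ onto itself; the size estimates of Theorem \ref{T:estimatesFS} translate into two-sided bounds for $\Gamma(x,y)^{-1}$ in terms of $E(x,d(x,y))$; and Lemma \ref{L:simple} provides the quantitative control needed to invert $E$ with good constants.

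For smoothness and continuity of $\rho_x$, the first ingredient gives $F(x,\cdot)\in C^\infty((0,\infty))$, so the composition $\rho_x(\cdot) = F(x,\Gamma(x,\cdot)^{-1})$ is smooth on $\Rn\setminus\{x\}$. For continuity at $x$, \eqref{segamma} implies $\Gamma(x,y)^{-1}\leq C^{-1}E(x,d(x,y))\to 0^+$ as $y\to x$, and since $F(x,s)\to 0^+$ as $s\to 0^+$ by inversion of $E$, we conclude $\rho_x(y)\to 0 = \rho_x(x)$.

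For \eqref{F2} I first integrate the double inequality $C_2\leq rE'(x,r)/E(x,r)\leq C_2^{-1}$ of Lemma \ref{L:simple} to obtain, for $0<r_1\leq r_2 <R_0$,
\[
(r_2/r_1)^{C_2}\leq E(x,r_2)/E(x,r_1)\leq (r_2/r_1)^{1/C_2}.
\]
Applied with $r_1 = F(x,s)$ and $r_2 = F(x,\lambda s)$ for $\lambda\geq 1$, this yields $F(x,\lambda s)\leq \lambda^{1/C_2}F(x,s)$, and a symmetric choice handles $0<\lambda\leq 1$. Rewriting \eqref{segamma} as $CE(x,d(x,y))\leq \Gamma(x,y)^{-1}\leq C^{-1}E(x,d(x,y))$, applying the monotone $F(x,\cdot)$, and using the identity $F(x,E(x,r))=r$, one obtains \eqref{F2} with $a$ depending only on $C$ and $C_2$.

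For \eqref{F3}, the chain rule and the inverse function identity $\partial_s F(x,s) = 1/E'(x,F(x,s))$ give
\[
X_j\rho_x(y) = -\frac{X_j\Gamma(x,y)}{E'(x,\rho_x(y))\,\Gamma(x,y)^2}.
\]
Combining Lemma \ref{L:simple} with \eqref{gammaE} one finds
\[
E'(x,\rho_x(y))\,\Gamma(x,y)^2 \geq \frac{C_2\,E(x,\rho_x(y))}{\rho_x(y)}\,\Gamma(x,y)^2 = \frac{C_2\,\Gamma(x,y)}{\rho_x(y)},
\]
so that
\[
|X_j\rho_x(y)|\leq \frac{\rho_x(y)}{C_2}\cdot \frac{|X_j\Gamma(x,y)|}{\Gamma(x,y)}.
\]
Theorem \ref{T:estimatesFS} supplies $|X_j\Gamma(x,y)|/\Gamma(x,y)\leq C'/d(x,y)$ and \eqref{F2} supplies $\rho_x(y)\leq a\,d(x,y)$, which gives \eqref{F3}. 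The main obstacle is translating the additive pointwise bound on $r(\log E)'(r)$ in Lemma \ref{L:simple} into a multiplicative comparability of $F(x,\cdot)$ across scales, which is exactly what the integration step above accomplishes.
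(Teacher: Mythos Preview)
Your proof is correct and follows essentially the same route as the paper's: the same chain-rule identity for $\nabla_{\mathscr X}\rho_x$, the same use of \eqref{segamma} followed by applying $F(x,\cdot)$, and the same appeal to Lemma \ref{L:simple} for the final bound in \eqref{F3}. The one place where you add value is the integration step that converts $C_2\le rE'(x,r)/E(x,r)\le C_2^{-1}$ into the power-type comparison $(r_2/r_1)^{C_2}\le E(x,r_2)/E(x,r_1)\le (r_2/r_1)^{1/C_2}$, and hence into the explicit multiplicative estimate $F(x,\lambda s)\le \lambda^{1/C_2}F(x,s)$; the paper simply invokes ``the doubling properties of the function $r\to F(x,r)$'' without spelling this out, so your version is more self-contained. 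For \eqref{F3} you bound the ratio $|X_j\Gamma|/\Gamma$ directly from the two inequalities in \eqref{gradgamma}, while the paper instead passes through Theorem \ref{T:nsw} to convert $|B(x,d(x,y))|$ into $E(x,\rho_x(y))$; both arrive at the same estimate and neither is genuinely different in substance.
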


\begin{proof}
Since $r \to E(x,r)$ is a polynomial function with positive coefficients, we infer that $t\to F(x,t)$ belongs to $C^\infty(\mathbb R)$. It is then clear that $\rho_x \in C^\infty(\Rn\setminus \{x\})$.
Keeping \eqref{segamma} in mind, and that $E(x,0) = F(x,0) = 0$, 
we see that $\Gamma(x,y) \to + \infty$ as $y \to x$. As a consequence, $\rho_x \in C(\Rn)$.
If we write \eqref{segamma} as follows
\[
\frac{C}{\Gamma(x,y)} \leq E(x,d(x,y)) \leq \frac{C^{-1}}{\Gamma(x,y)},
\]
and we apply the function $F(x,\cdot)$ to this inequality, we obtain
\[
F\left(x,\frac{C}{\Gamma(x,y)}\right)\ \leq\ d(x,y)\ \leq\ F\left(x,\frac{C^{-1}}{\Gamma(x,y)}\right)\ .
\]
From the latter equation, and from the doubling properties of the function $r\to F(x,r)$, we now obtain \eqref{F2}. We next prove \eqref{F3}.
The chain rule and the inverse function theorem give for $y\not= x$
\begin{equation}\label{gradrho}
\x \rho_x(y) = - \frac{F'(x,\Gamma(x,y)^{-1})}{\Gamma(x,y)^2} \x\Gamma(x,y) = - \frac{1}{E'(x,\rho_x(y)) \Gamma(x,y)^2} \x\Gamma(x,y).
\end{equation}
Substitution of \eqref{gammaE} in \eqref{gradrho} allows to rewrite the latter equation in the more suggestive way
\begin{equation}\label{zerodiv}
\x\rho_x(y)  = - \frac{E(x,\rho_x(y))^2}{E'(x,\rho_x(y))} \x \Gamma(x,y).
\end{equation}
Using \eqref{gradgamma} we obtain
\[
|\x \Gamma(x,y)| \leq  \frac{C}{\rho_x(y) E(x,\rho_x(y))}.
\]
Substituting this information in \eqref{zerodiv} we find
\[
|\x \rho_x(y)| \leq C \frac{E(x,\rho_x(y))}{\rho_x(y) E'(x,\rho_x(y))}.
\]
The desired estimate \eqref{F3} now follows from Lemma \ref{L:simple}.

\end{proof}

%%%%%%%%%%%%%%%%%%%%%%%%%%%%%%

\section{Mean-value formulas for sub-Laplaceans}\label{S:cgl}

We next recall some mean-value formulas that were found in \cite{CGL}.
For every $t>0$ we denote by
\begin{equation}\label{sls}
\Om(x,t) = \left\{y\in \Rn\mid
\Gamma(x,y)>\frac{1}{t}\right\}
\end{equation}
the superlevel set of $\Gamma(x,\cdot)$. The
following basic result was proved in \cite{CGL}. 

\begin{prop}\label{P:mv}
For any $\psi \in C^{\infty}(\Rn)$, $x\in \Rn$ and $t>0$ one has
\begin{equation}\label{psi}
\psi(x) = \int_{\partial \Om(x,t)} \psi(y) \frac{|\x
\Gamma(x,y)|^2}{|\nabla \Gamma(x,y)|} dH_{n-1}(y) -
\int_{\Om(x,t)} \mathscr L \psi(y)
\left[\Gamma(x,y)-\frac{1}{t}\right] dy,
\end{equation}
where $H_{n-1}$ denotes the standard $(n-1)$-dimensional Hausdorff measure in $\Rn$. 
\end{prop}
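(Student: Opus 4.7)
The approach I would take is the classical one for mean-value formulas: apply Green's second identity on a punctured superlevel set of $\Gamma(x,\cdot)$, exploiting the divergence structure of $\mathscr L$, and then let the excising neighborhood collapse to $\{x\}$.

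First I would rewrite $\mathscr L$ in divergence form. Writing $X_i = \sum_j a_{ij}(y) \p_{y_j}$, integration by parts gives $X_i^\star u = -\sum_j \p_{y_j}(a_{ij} u)$, whence $\mathscr L u = \operatorname{div}(A\nabla u)$ with the symmetric positive semidefinite characteristic matrix $A(y) = \sum_i a_i(y) a_i(y)^T$. In particular $\langle A\nabla u, \nabla v\rangle = \sum_i X_i u\, X_i v$, so $\langle A\nabla \Gamma, \nabla\Gamma\rangle = |\x \Gamma|^2$. For $\eta > 1/t$ sufficiently large that $\Omega(x,1/\eta) = \{\Gamma(x,\cdot) > \eta\} \subset\subset \Omega(x,t)$, set $w(y) = \Gamma(x,y) - 1/t$. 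Then $w \in C^\infty(\overline{\Omega^\eta})$ where $\Omega^\eta = \Omega(x,t)\setminus \overline{\Omega(x,1/\eta)}$, it vanishes on $\partial \Omega(x,t)$, takes the constant value $\eta - 1/t$ on $\partial\Omega(x,1/\eta)$, and satisfies $\mathscr L w = 0$ throughout $\Omega^\eta$.

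Applying Green's identity to $w$ and $\psi$ on $\Omega^\eta$, and observing that the outward unit normal to $\Omega^\eta$ is $\nu = -\nabla\Gamma/|\nabla\Gamma|$ on $\partial\Omega(x,t)$ and $\nu = +\nabla\Gamma/|\nabla\Gamma|$ on $\partial\Omega(x,1/\eta)$, I obtain
\[
-\int_{\Omega^\eta} w\, \mathscr L \psi \, dy = -\int_{\p \Om(x,t)} \psi \, \frac{|\x \Gamma|^2}{|\nabla \Gamma|} dH_{n-1} + \int_{\p \Om(x,1/\eta)} \psi \, \frac{|\x \Gamma|^2}{|\nabla \Gamma|} dH_{n-1} - (\eta - \tfrac{1}{t}) \int_{\p \Om(x,1/\eta)} \frac{\langle A\nabla \psi, \nabla \Gamma\rangle}{|\nabla \Gamma|} dH_{n-1}.
\]
The volume integral on the left tends to $-\int_{\Om(x,t)} (\Gamma - 1/t) \mathscr L \psi\, dy$ by dominated convergence, since $\Gamma(x,\cdot)$ is locally integrable near $x$ (combine the upper bound in Theorem \ref{T:estimatesFS} with the doubling bound \eqref{dcallscales} in Corollary \ref{C:dc}). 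For the middle term, applying the divergence theorem to $A\nabla\Gamma$ on $\Om(x,1/\eta)\setminus B(x,\ve)$ and using the fundamental-solution property $-\mathscr L \Gamma(x,\cdot) = \delta_x$ yields the $\eta$-independent normalization
\[
\int_{\p \Om(x,1/\eta)} \frac{|\x \Gamma|^2}{|\nabla \Gamma|} dH_{n-1} = 1,
\]
so by continuity of $\psi$ and the fact that $\Om(x,1/\eta)$ shrinks to $\{x\}$ as $\eta \to \infty$, this term converges to $\psi(x)$. The last inner-boundary term I rewrite via the divergence theorem as $-(\eta - 1/t)\int_{\Om(x,1/\eta)} \mathscr L \psi \, dy$; using $|\Om(x,1/\eta)| \asymp \La(x, F(x,\eta))$ from Theorem \ref{T:nsw} together with the relation $\eta \asymp 1/E(x,F(x,\eta)) = F(x,\eta)^2/\La(x,F(x,\eta))$ derived from \eqref{segamma}, one gets $\eta\, |\Om(x,1/\eta)| \asymp F(x,\eta)^2 \to 0$, forcing this term to vanish. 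Rearranging produces exactly \eqref{psi}.

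The main obstacle is a clean bookkeeping at the inner boundary: both the mass normalization and the vanishing of the $w\,\langle A\nabla\psi,\nu\rangle$ piece. The normalization follows from $-\mathscr L\Gamma=\delta_x$ plus the divergence theorem with a small excising ball; the vanishing reduces, after one more application of the divergence theorem, to quantitative control of $|\Om(x,1/\eta)|$ against the level $\eta$, which is supplied by Theorems \ref{T:nsw} and \ref{T:estimatesFS} and the equivalence in Proposition \ref{P:goodrho}.
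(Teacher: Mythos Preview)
The paper does not supply its own proof of this proposition; it is simply quoted from \cite{CGL}. Your argument via Green's second identity on the punctured superlevel set, followed by collapsing the inner excision, is precisely the classical argument used there and is correct in substance.

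Two small points. First, you tacitly assume that $\partial\Om(x,t)$ and $\partial\Om(x,1/\eta)$ are smooth hypersurfaces with $|\nabla\Gamma|\neq 0$; this holds for a.e.\ level by Sard's theorem applied to $\Gamma(x,\cdot)\in C^\infty(\Rn\setminus\{x\})$, and the exceptional levels are then handled by continuity in $t$. Second, the bookkeeping in your final size estimate has the indices inverted. Since $\Om(x,1/\eta)=\{\Gamma(x,\cdot)>\eta\}=B_\xx(x,F(x,1/\eta))$, \eqref{lampadino} gives $|\Om(x,1/\eta)|\asymp\La(x,F(x,1/\eta))$, and from $E(x,F(x,1/\eta))=1/\eta$ one obtains $\La(x,F(x,1/\eta))=\eta^{-1}F(x,1/\eta)^2$, hence $\eta\,|\Om(x,1/\eta)|\asymp F(x,1/\eta)^2\to 0$. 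Your written relation ``$\eta\asymp 1/E(x,F(x,\eta))$'' is false as stated (the right-hand side equals $1/\eta$), but with $F(x,\eta)$ replaced by $F(x,1/\eta)$ throughout, the computation gives exactly the vanishing you claim.
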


We intend to formulate Proposition \ref{P:mv} in a more intrinsic fashion. With this objective in mind we introduce the following notion.

\begin{dfn}\label{D:xball}
We define the $\xx$-\emph{ball} centered at $x$ with radius $r>0$ as the set
\[
B_\xx(x,r) = \Om(x,E(x,r)) = \left\{y\in \Rn\mid \Gamma(x,y) > \frac{1}{E(x,r)}\right\}.
\]
We note explicitly that in view of \eqref{rhoxy} we can rewrite
\[
B_\xx(x,r) = \{y\in \Rn\mid \rho_x(y) < r\}.
\]
\end{dfn}

From formula \eqref{F2} in Proposition \ref{P:goodrho} we immediately obtain that for every bounded set $U\subset \Rn$ there exist $a\ge1$  and $R_0>0$, depending on $U$ and $\mathscr X$, such that for every $x\in U$ and $0<r<R_0$ one has with the number $a>0$ as in \eqref{F2}
\begin{equation}\label{bb}
B(x,a^{-1}r) \subset B_\xx(x,r) \subset B(x,ar).
\end{equation}
Combining Theorem \ref{T:nsw} with \eqref{nswpol} and \eqref{bb}, we conclude that for every $x\in U$ and $0<r<R_0$ one has
\begin{equation}\label{lampadino}
C \La(x,r) \le |B_\xx(x,r)| \le C^{-1} \La(x,r).
\end{equation}
The estimate \eqref{lampadino} and the expression \eqref{nswpol} show, in particular, that for any fixed $x\in \R^N$ and every $\alpha<Q(x)$, one has
\begin{equation}\label{asy}
\underset{r\to 0^+}{\lim} \frac{|B_\xx(x,r)|}{r^\alpha} = 0. 
\end{equation}

Our next objective is to express the mean-value formula \eqref{psi} in Proposition \ref{P:mv} in a more intrinsic fashion using the regularized pseudo-distance $\rho_x$ and the $\xx$-balls $B_\xx(x,r)$. With this goal in mind we notice that the inverse function theorem gives
\begin{equation}\label{F'}
F'(x,E(x,r)) = \frac{1}{E'(x,r)}.
\end{equation}
We thus have from \eqref{F'}
\begin{equation}\label{F'rho}
F'(x,\Gamma(x,y)^{-1}) = \frac{1}{E'(x,\rho_x(y))}.
\end{equation}
The chain rule now gives
\[
\nabla \rho_x(y) = - F'(x,\Gamma(x,y)^{-1}) \Gamma(x,y)^{-2} \nabla \Gamma(x,y),
\]
and similarly
\[
\x \rho_x(y) = - F'(x,\Gamma(x,y)^{-1}) \Gamma(x,y)^{-2} \x \Gamma(x,y).
\]
Combining the latter two equations with \eqref{rhoxy} and \eqref{F'rho}, we find
\[
\frac{|\x
\Gamma(x,y)|^2}{|\nabla \Gamma(x,y)|} = 
\frac{\Gamma(x,y)^2}{F'(x,\Gamma(x,y)^{-1})} \frac{|\x \rho_x(y)|^2}{|\nabla \rho_x(y)|} = \frac{E'(x,\rho_x(y))}{E(x,\rho_x(y))^2} \frac{|\x \rho_x(y)|^2}{|\nabla \rho_x(y)|} 
\]

\begin{dfn}\label{D:smv}
We define the \emph{surface mean-value operator} acting on a function $\psi\in C(\Rn)$ as follows
\begin{align}\label{smv}
\mathscr M_\xx \psi(x,r) & =  \frac{E'(x,r)}{E(x,r)^2} \int_{\partial B_\xx(x,r)} \psi(y) \frac{|\x \rho_x(y)|^2}{|\nabla \rho_x(y)|}  dH_{n-1}(y).
\notag
\end{align}
\end{dfn}

Using Definition \ref{D:smv} we can reformulate \eqref{psi} in the following suggestive way.

\begin{prop}\label{P:smv}
Let $\psi \in C^{2}(\Rn)$. For any $x\in \R^n$ and
$r>0$ one has
\begin{equation}\label{psi2}
\mathscr M_\xx \psi(x,r) = \psi(x) +
\int_{B_\xx(x,r)} \mathscr L \psi(y)
\left[\Gamma(x,y)-\frac{1}{E(x,r)}\right] dy.
\end{equation}
In particular, letting $\psi \equiv 1$ in \eqref{psi2}, we find
\begin{equation}\label{psi3}
 \int_{\partial B_\xx(x,r)} \frac{|\x \rho_x(y)|^2}{|\nabla \rho_x(y)|}  dH_{N-1}(y) = \frac{E(x,r)^2}{E'(x,r)},
\end{equation}
for every $r>0$.
\end{prop}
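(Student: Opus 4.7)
The plan is to derive Proposition \ref{P:smv} as a direct translation of Proposition \ref{P:mv} under the substitution $t=E(x,r)$, together with the algebraic identity for the surface densities that was worked out in the paragraph preceding Definition \ref{D:smv}.

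First I would set $t=E(x,r)$ in \eqref{psi}. By Definition \ref{D:xball}, one has $\Om(x,E(x,r))=B_\xx(x,r)$, and of course $1/t=1/E(x,r)$. With this choice, the solid integral in \eqref{psi} is exactly the solid integral appearing in \eqref{psi2}, so everything reduces to identifying the boundary integral in \eqref{psi} with $\mathscr M_\xx \psi(x,r)$.

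For this identification I would invoke the pointwise identity established just before Definition \ref{D:smv}, namely
\[
\frac{|\x\Gamma(x,y)|^2}{|\nabla \Gamma(x,y)|} \;=\; \frac{E'(x,\rho_x(y))}{E(x,\rho_x(y))^2}\,\frac{|\x \rho_x(y)|^2}{|\nabla \rho_x(y)|},
\]
and observe that on $\partial B_\xx(x,r)$ one has $\rho_x(y)=r$ by the very definition of the $\xx$-ball; hence $E(x,\rho_x(y))=E(x,r)$ and $E'(x,\rho_x(y))=E'(x,r)$ may be pulled out of the integral. What remains is precisely $\mathscr M_\xx\psi(x,r)$, which yields \eqref{psi2}.

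The second assertion \eqref{psi3} is then immediate by specializing \eqref{psi2} to the test function $\psi\equiv 1$: the right-hand side collapses to $1$ because $\mathscr L 1 = 0$, so
\[
\frac{E'(x,r)}{E(x,r)^2}\int_{\partial B_\xx(x,r)} \frac{|\x \rho_x(y)|^2}{|\nabla \rho_x(y)|}\,dH_{n-1}(y)=1,
\]
and rearranging gives \eqref{psi3}. I do not anticipate any genuine obstacle here; the only item requiring mild care is the verification that, under $t=E(x,r)$, the superlevel set $\Om(x,t)$ coincides with $B_\xx(x,r)$ and that $\rho_x\equiv r$ on its boundary, both of which are encoded in the definitions \eqref{rhoxy} and \ref{D:xball} and the strict monotonicity of $E(x,\cdot)$ established earlier via Lemma \ref{L:simple}.
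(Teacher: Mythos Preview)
Your proposal is correct and matches the paper's own treatment: the paper does not give a separate proof of Proposition~\ref{P:smv}, but rather obtains it exactly as you describe, by substituting $t=E(x,r)$ in \eqref{psi}, invoking the density identity displayed just before Definition~\ref{D:smv}, and using $\rho_x\equiv r$ on $\partial B_\xx(x,r)$ to pull the factor $E'(x,r)/E(x,r)^2$ outside the surface integral. The deduction of \eqref{psi3} from $\psi\equiv 1$ is likewise identical.
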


We next show how Proposition \ref{P:smv} can be used to introduce a subelliptic version of the Blaschke-Privalov Laplacean from classical potential theory. We recall that if $\psi\in C^2(\Rn)$, and we denote with $\Delta \psi = \sum_{k=1}^n \frac{\p^2 \psi}{\p x_k^2}$ the standard Laplacean, then for every $x\in \Rn$ one has
\begin{equation}\label{BP0}
\Delta \psi (x) = 2n\ \underset{r\to 0}{\lim} \frac{\mathscr M \psi(x,r) - \psi(x)}{r^2},
\end{equation}
where we have indicated with
\[
\mathscr M \psi(x,r) = \frac{1}{\sigma_{n-1} r^{n-1}} \int_{S(x,r)} \psi(y) d\sigma(y),
\]
the classical spherical mean-value operator acting on $\psi$. We want to show next that a similar formula holds for the subelliptic mean-value operator $\mathscr M_\xx \psi(x,r)$. With this objective in mind we introduce a crucial definition.

\begin{dfn}\label{D:critical2}
For a given $x\in \Rn$ and $r>0$ we define the \emph{density function at} $x$ by the formula
\[
\zeta(x,r) \overset{def}{=} \int_0^r \frac{E'(x,t)|B_\xx(x,t)|}{E^2(x,t)} dt.
\]
\end{dfn}

The motivation for Definition \ref{D:critical2} will be clear from the statement of Proposition \ref{P:BP}, and from its proof. Before proceeding, we pause to note the following interesting fact.

\begin{prop}\label{P:critical}
Let $\Ga$ be a Carnot group. Then, there exists a universal constant $\alpha = \alpha(\Ga)>0$ such that for every $x\in \Ga$ and every $r>0$ one has
\[
\zeta(x,r) = \alpha r^2.
\]
\end{prop}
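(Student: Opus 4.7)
The plan is to use the homogeneity of the Carnot group structure to obtain explicit formulas for $E(x,r)$ and $|B_\xx(x,r)|$, after which the integrand in Definition \ref{D:critical2} reduces to a constant multiple of $t$ and the identity $\zeta(x,r) = \alpha r^2$ follows from $\int_0^r t\,dt = r^2/2$.

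First I would verify that in a Carnot group $\Ga$ with stratified Lie algebra $\bg = V_1 \oplus \cdots \oplus V_s$ of homogeneous dimension $Q$, the Nagel--Stein--Wainger polynomial reduces to the single monomial $\La(x,r) = a_0\, r^Q$, where $a_0 > 0$ is a constant independent of $x$. The reason is that if an $n$-tuple $I$ contributes a nonzero determinant $a_I(x)$, then the commutators $Y_{i_1},\dots,Y_{i_n}$ must be linearly independent, and by the stratification this forces exactly $n_k = \dim V_k$ of them to be drawn from each layer $V_k$; consequently $d(I) = \sum_k k\, n_k = Q$. Moreover, left-invariance of the $Y_j$'s combined with $\det(L_{x*})=1$ (Lebesgue measure is the Haar measure on $\Ga$) gives $a_I(x) = a_I(e)$ for every $x$, so the pointwise homogeneous dimension is $Q(x) \equiv Q$ and
\[
E(x,r) = a_0\, r^{Q-2}, \qquad E'(x,r) = a_0(Q-2)\, r^{Q-3}.
\]

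Next I would invoke Folland's theorem \cite{F}, which supplies a homogeneous norm $N$ on $\Ga$ and a constant $c_1 > 0$ such that $\Gamma(x,y) = c_1\, N(y^{-1}\cdot x)^{2-Q}$. Inserting this together with the formula for $E$ into the identity \eqref{gammaE} yields $\rho_x(y) = \kappa\, N(y^{-1}\cdot x)$ with $\kappa = (c_1 a_0)^{-1/(Q-2)}$; thus $B_\xx(x,r)$ coincides with the left translate by $x$ of the $N$-ball of radius $r/\kappa$ at the identity. Since Lebesgue measure on $\Ga$ is left-invariant and $Q$-homogeneous under the group dilations, this produces $|B_\xx(x,r)| = b_0\, r^Q$ for a universal constant $b_0 > 0$.

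Substituting into Definition \ref{D:critical2} gives
\[
\frac{E'(x,t)\,|B_\xx(x,t)|}{E(x,t)^2} = \frac{a_0(Q-2)\, b_0\, t^{2Q-3}}{a_0^2\, t^{2Q-4}} = \frac{b_0(Q-2)}{a_0}\, t,
\]
and integrating from $0$ to $r$ yields $\zeta(x,r) = \alpha\, r^2$ with $\alpha = b_0(Q-2)/(2 a_0)$, a universal constant depending only on $\Ga$. The one mildly delicate point is the monomial reduction of $\La(x,r)$; if preferred, one could sidestep it by invoking instead the well-known identity $|B(x,r)| = b_0\, r^Q$ in a Carnot group together with the $(2-Q)$-homogeneity of $\Gamma$ to extract $E(x,r) = a_0\, r^{Q-2}$ directly from \eqref{gammaE}.
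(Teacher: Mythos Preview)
Your proposal is correct and follows essentially the same route as the paper: both arguments reduce to showing that in a Carnot group $E(x,r)=a_0 r^{Q-2}$ and $|B_\xx(x,r)|=b_0 r^Q$ for universal constants, after which the integrand in Definition \ref{D:critical2} becomes a constant multiple of $t$ and the conclusion is immediate. You supply more detail than the paper (the stratification argument for why $\La(x,r)$ is a pure monomial, and the explicit computation of $\rho_x$ via Folland's homogeneous norm), but the strategy and the final formula $\alpha = b_0(Q-2)/(2a_0)$ match the paper's $\alpha = (Q-2)\omega^{-1}\beta/2$ exactly.
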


\begin{proof}
We notice that in a Carnot group the Nagel-Stein-Wainger polynomial is actually a monomial which is independent of $x\in \Ga$, i.e., $\Lambda(x,r) = \omega r^Q$, where $\omega = \omega(\Ga)>0$ is a universal constant, and $Q$ is the homogeneous dimension of $\Ga$. Consequently, one has $E(x,r) = \omega r^{Q-2}$. Since the fundamental solution of any sub-Laplacian is homogeneous of degree $2-Q$ (see Theorem 2.1 in Folland's seminal paper \cite{F}), and invariant with respect to left-translations, we see that $|B_\xx(x,r)| = \beta r^Q$ for every $x\in \Ga$ and $r>0$, where $\beta = \beta(\Ga)>0$ is a universal constant. We infer that for every $x\in \Ga$ and $t>0$ one has
\[
\frac{E'(x,t) |B_\xx(x,t)|}{E(x,t)^2} = \frac{(Q-2) \omega \beta t^{2Q-3}}{\omega^2 t^{2Q-4}} = (Q-2) \omega^{-1} \beta t.
\]
The desired conclusion follows immediately from this formula and the definition of $\zeta(x,r)$, if we set $\alpha = (Q-2) \omega^{-1} \beta/2$.

\end{proof}

Although in the general case of a sub-Laplacean in $\Rn$ we do not have a precise formula as in Proposition \ref{P:critical}, the qualitative behavior of $r\to \zeta(x,r)$ is locally uniformly analogous to the case of a Carnot group.

\begin{prop}\label{P:quali}
Given a bounded set $U\subset \Rn$, there exist $\alpha, R_0>0$, depending on $U$ and $\mathscr X$, such that for every $x\in U$ and $0<r<R_0$ one has
\[
\alpha r^2 \le \zeta(x,r) \le \alpha^{-1} r^2.
\]
\end{prop}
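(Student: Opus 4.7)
The strategy is to show the integrand in the definition of $\zeta(x,r)$ is comparable to $t$, uniformly in $x\in U$ and $0<t<R_0$, and then simply integrate. Writing
\[
\frac{E'(x,t)\,|B_\xx(x,t)|}{E(x,t)^2} \;=\; \frac{t\,E'(x,t)}{E(x,t)} \cdot \frac{|B_\xx(x,t)|}{t\,E(x,t)},
\]
we see that the two factors can be controlled separately using results already established.

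For the first factor, Lemma \ref{L:simple} gives $C_2 \le t E'(x,t)/E(x,t) \le C_2^{-1}$ for every $x\in U$ and $0<t<R_0$. For the second factor, we combine the definition $E(x,t) = \La(x,t)/t^2$ with the estimate \eqref{lampadino} from Theorem \ref{T:nsw}, which yields
\[
C \le \frac{|B_\xx(x,t)|}{\La(x,t)} = \frac{|B_\xx(x,t)|}{t^2\,E(x,t)} \le C^{-1},
\]
uniformly for $x\in U$ and $0<t<R_0$ (shrinking $R_0$ if necessary). Multiplying the two bounds, there exist positive constants $c_1, c_2$ depending only on $U$ and $\mathscr X$ such that
\[
c_1\, t \;\le\; \frac{E'(x,t)\,|B_\xx(x,t)|}{E(x,t)^2} \;\le\; c_2\, t
\]
for all $x\in U$ and $0<t<R_0$.

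The conclusion follows by integrating this pointwise inequality from $0$ to $r$, which gives $\frac{c_1}{2}r^2 \le \zeta(x,r) \le \frac{c_2}{2}r^2$. Setting $\alpha = \min(c_1/2,\,2/c_2)$ yields the claim. There is no real obstacle here, since all the ingredients are already in place; the only point to check is that both Lemma \ref{L:simple} and the equivalence \eqref{lampadino} are valid down to $t=0$ after possibly shrinking $R_0$, so that the bounds on the integrand hold throughout $[0,r]$ for $r<R_0$.
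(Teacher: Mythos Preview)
Your proof is correct and follows essentially the same approach as the paper: factor the integrand as $\dfrac{tE'(x,t)}{E(x,t)}\cdot\dfrac{|B_\xx(x,t)|}{\La(x,t)}\cdot t$, use Lemma~\ref{L:simple} and \eqref{lampadino} to bound each of the first two factors above and below by positive constants, and then integrate from $0$ to $r$. A small presentational remark: in your decomposition the second factor is $\dfrac{|B_\xx(x,t)|}{t\,E(x,t)}$, while the quantity you actually bound is $\dfrac{|B_\xx(x,t)|}{t^2 E(x,t)}=\dfrac{|B_\xx(x,t)|}{\La(x,t)}$; these differ by a factor of $t$, which is what produces the linear growth of the integrand---this is implicit in your ``multiplying the two bounds'' step but could be made explicit.
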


\begin{proof}
We write
\[
\frac{E'(x,t)|B_\xx(x,t)|}{E^2(x,t)} =  \frac{t E'(x,t)}{E(x,t)} \frac{|B_\xx(x,t)|}{\La(x,t)}\ t.
\]
By Lemma \ref{L:simple} and \eqref{lampadino} we conclude that for some constant $\bar C>0$ one has
\[
\bar C t \le \frac{E'(x,t)|B_\xx(x,t)|}{E^2(x,t)} \le \bar C^{-1} t.
\]
The desired conclusion immediately follows upon integrating the above inequalities on $(0,r)$.

\end{proof}
 
The main motivation for introducing Definition \ref{D:critical2}
is the following result.

\begin{prop}[Blaschke-Privalov sub-Laplacean]\label{P:BP}
Let $\psi \in C^2(\Rn)$. Then, for any $x\in \Rn$ one has
\begin{equation}\label{psi20}
\underset{r\to 0^+}{\lim} \frac{\mathscr M_\xx \psi(x,r) - \psi(x)}{\zeta(x,r)} = \mathscr L \psi(x).
\end{equation}
\end{prop}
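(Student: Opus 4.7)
The starting point is Proposition \ref{P:smv}, which rewrites $\mathscr M_\xx \psi(x,r) - \psi(x)$ as the integral
\[
\int_{B_\xx(x,r)} \mathscr L \psi(y) \left[\Gamma(x,y) - \frac{1}{E(x,r)}\right] dy.
\]
The plan is to split this against $\mathscr L \psi(x)$: writing $\mathscr L \psi(y) = \mathscr L \psi(x) + [\mathscr L \psi(y) - \mathscr L \psi(x)]$, one obtains a main term
\[
\mathscr L \psi(x) \int_{B_\xx(x,r)} \left[\Gamma(x,y) - \frac{1}{E(x,r)}\right] dy
\]
plus an error integral with integrand $[\mathscr L \psi(y) - \mathscr L \psi(x)][\Gamma(x,y) - 1/E(x,r)]$.

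The key identity to prove is
\[
\int_{B_\xx(x,r)} \left[\Gamma(x,y) - \frac{1}{E(x,r)}\right] dy = \zeta(x,r),
\]
which is exactly what motivates Definition \ref{D:critical2}. To establish it, I would use \eqref{gammaE} to rewrite the bracket as $\phi(\rho_x(y))$, where $\phi(t) = 1/E(x,t) - 1/E(x,r)$ is positive and strictly decreasing on $(0,r)$ with $\phi(r)=0$ and $\phi(0^+)=+\infty$. Since $B_\xx(x,t) = \{y : \rho_x(y) < t\}$, the layer-cake formula gives
\[
\int_{B_\xx(x,r)} \phi(\rho_x(y)) dy = \int_0^\infty |B_\xx(x,\phi^{-1}(s))| ds,
\]
and the change of variables $s = \phi(t)$ (with $ds = \phi'(t) dt = -E'(x,t)/E(x,t)^2 dt$) converts this into $\int_0^r |B_\xx(x,t)| E'(x,t)/E(x,t)^2 dt = \zeta(x,r)$, as desired.

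To control the error term, I would use the continuity of $\mathscr L \psi$ at $x$ (guaranteed by $\psi \in C^2(\Rn)$ and smoothness of the $X_i$'s) together with the containment $B_\xx(x,r) \subset B(x,ar)$ from \eqref{bb}: setting $\omega_\psi(r) = \sup_{y\in B_\xx(x,r)} |\mathscr L \psi(y) - \mathscr L \psi(x)|$, one has $\omega_\psi(r) \to 0$ as $r \to 0^+$, and the error is bounded in absolute value by $\omega_\psi(r) \cdot \zeta(x,r)$ by the same identity applied to the positive integrand $\Gamma(x,y)-1/E(x,r)$. Dividing the whole expression by $\zeta(x,r)$ then yields
\[
\frac{\mathscr M_\xx \psi(x,r) - \psi(x)}{\zeta(x,r)} = \mathscr L \psi(x) + O(\omega_\psi(r)),
\]
and letting $r \to 0^+$ gives \eqref{psi20}.

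The only non-routine step is the layer-cake computation identifying the integral with $\zeta(x,r)$; once one recognizes that the integrand is a function of $\rho_x$ alone, the rest is a standard soft argument based on continuity of $\mathscr L \psi$ and the nondegeneracy of $\zeta(x,r)$ from Proposition \ref{P:quali} (which in particular ensures $\zeta(x,r) > 0$ for small $r > 0$, so the quotient makes sense).
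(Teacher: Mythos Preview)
Your proof is correct but follows a genuinely different route from the paper's. The paper applies de l'Hospital's rule: it differentiates both $\mathscr M_\xx \psi(x,r) - \psi(x)$ and $\zeta(x,r)$ in $r$, uses the coarea formula to compute the derivative of the solid integral (the boundary contribution vanishes because $\Gamma(x,y) - 1/E(x,r) = 0$ on $\partial B_\xx(x,r)$), and then observes that by the very definition of $\zeta$ one has $\zeta'(x,r) = E'(x,r)|B_\xx(x,r)|/E(x,r)^2$, so the quotient reduces to an average of $\mathscr L\psi$ over $B_\xx(x,r)$. Your argument instead establishes the closed-form identity
\[
\int_{B_\xx(x,r)} \left[\Gamma(x,y) - \frac{1}{E(x,r)}\right] dy = \zeta(x,r)
\]
directly by layer-cake, and then controls the error by continuity of $\mathscr L\psi$. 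This identity is exactly the content of Remark~\ref{R:density} (the link with the function $q_t(x)$ of \cite{BL}), which the paper records only after the fact; you are effectively using it as the main engine of the proof. Your approach is slightly more elementary in that it avoids de l'Hospital and the coarea formula, and it makes the role of $\zeta$ as a normalizing weight transparent from the outset; the paper's approach, on the other hand, has the virtue of \emph{discovering} the correct density $\zeta(x,r)$ as whatever makes the de l'Hospital quotient equal to $1$.
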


\begin{proof}
By means of \eqref{psi2}, de l'Hospital rule and the coarea formula, we find
\begin{align*}
& \underset{r\to 0^+}{\lim} \frac{\mathscr M_\xx \psi(x,r) - \psi(x)}{\zeta(x,r)} = \underset{r\to 0^+}{\lim} \frac{\frac{d}{dr} \int_{B_\xx(x,r)} \mathscr L \psi(y)
\left[\Gamma(x,y)-\frac{1}{E(x,r)}\right] dy}{\zeta'(x,r)}
\\
& =  \underset{r\to 0^+}{\lim} \frac{\int_{\p B_\xx(x,r)} \frac{\mathscr L \psi(y)}{|\nabla \rho_x(y)|}
\left[\Gamma(x,y)-\frac{1}{E(x,r)}\right] dy - \frac{d}{dr} E(x,r)^{-1}\int_{B_\xx(x,r)} \mathscr L \psi(y) dy}{\zeta'(x,r)}
\\
& = \underset{r\to 0^+}{\lim} \frac{E'(x,r)|B_\xx(x,r)|}{\zeta'(x,r) E(x,r)^2} \frac{1}{|B_\xx(x,r)|}\int_{B_\xx(x,r)} \mathscr L \psi(y) dy
\\
& = \underset{r\to 0^+}{\lim} \frac{E'(x,r)|B_\xx(x,r)|}{\zeta'(x,r) E(x,r)^2}\ \underset{r\to 0^+}{\lim} \frac{1}{|B_\xx(x,r)|}\int_{B_\xx(x,r)} \mathscr L \psi(y) dy
\\
& = \mathscr L \psi(x)\ \underset{r\to 0^+}{\lim} \frac{E'(x,r)|B_\xx(x,r)|}{\zeta'(x,r)  E(x,r)^2},
\end{align*}
where in the last equality we have used the fact that $\mathscr L \psi \in C(\Rn)$, and that from \eqref{asy} we know that $|B_\xx(x,r)| \to 0$ as $r\to 0^+$. Since Definition \ref{D:critical2} gives
\[
\frac{E'(x,r)|B_\xx(x,r)|}{\zeta'(x,r)  E(x,r)^2} \equiv 1,
\]
the desired conclusion immediately follows.

\end{proof}

\begin{rmrk}\label{R:density}
We note here that the above proof of Proposition \ref{P:BP}, based on a simple application of de L'Hospital rule, leads in a natural way to our Definition \ref{D:critical2} of the density function $\zeta(x,r)$. We mention in this connection that, although we were not aware of this at the time we wrote  a first draft of this note, Proposition \ref{P:BP} has already appeared in the literature in Proposition 3.5 in the interesting paper \cite{BL}. To see this, we observe that in \cite{BL} the authors base their entire analysis on formula \eqref{psi} in Proposition \ref{P:mv} above. They thus consider the mean-value operator
\[
m_t(\psi)(x) \overset{def}{=}  \int_{\partial \Om(x,t)} \psi(y) \frac{|\x
\Gamma(x,y)|^2}{|\nabla \Gamma(x,y)|} dH_{n-1}(y)
\]
and their Proposition 3.5 states that
\begin{equation}\label{bl}
\underset{t\to 0^+}{\lim} \frac{m_t(u)(x) - u(x)}{q_t(x)} = \mathscr L\psi(x),
\end{equation} 
where with $\Om(x,t)$ as in \eqref{sls} above, they define
\[
q_t(x) = \int_{\Om(x,t)} \left[\Gamma(x,y) - \frac 1t\right] dy.
\]
Using the coarea formula they subsequently recognize in their formula (11.23) the following alternative expression 
\[
q_t(x) = \int_0^t \frac{|\Om(x,s)|}{s^2} ds.
\]
Now, making the change of variable $s = E(x,t)$ in our Definition \ref{D:critical2} we have $ds = E'(x,t) dt$, and thus we find from Definition \ref{D:xball}
\[
\zeta(x,r) = \int_0^{E(x,r)} \frac{|B_\xx(x,F(x,s))|}{s^2} ds = \int_0^{E(x,r)} \frac{|\Om(x,s)|}{s^2} ds.
\]
From these observations it is thus clear that, up to the non-isotropic ``rescaling" $r\to E(x,r)$, our density function $\zeta(x,r)$ is precisely the function $q_r(x)$ in \cite{BL} since we have
\[
q_r(x) = \zeta(x,F(x,r)).
\]
In particular, keeping Proposition \ref{P:critical} in mind we see that in a Carnot group one has 
\[
q_r(x,r) = \gamma(\Ga) r^{2/(Q-2)},
\]
where $\gamma(\Ga)>0$ is a universal constant.

\end{rmrk}

Combining Propositions \ref{P:critical} and \ref{P:BP} we obtain the following interesting result which parallels the classical Blaschke-Privalov formula \eqref{BP0} for the Laplacean.

\begin{prop}\label{P:BPgroup}
Let $\Ga$ be a Carnot group. Then, there exists a universal constant $\alpha = \alpha(\Ga)>0$ such that for every $x\in \Ga$ and every $\psi \in C^2(\Ga)$ one has
\begin{equation}\label{psi22}
\underset{r\to 0^+}{\lim} \frac{\mathscr M_\xx \psi(x,r) - \psi(x)}{r^2} = \alpha^{-1} \mathscr L \psi(x).
\end{equation}
\end{prop}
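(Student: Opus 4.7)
The plan is to deduce \eqref{psi22} directly from Propositions \ref{P:critical} and \ref{P:BP}, which together carry all the content. Proposition \ref{P:BP} furnishes the intrinsic Blaschke-Privalov formula for a general sub-Laplacean, but with the density function $\zeta(x,r)$ in the denominator. Proposition \ref{P:critical} asserts that in the Carnot group setting this density has the particularly clean monomial form $\zeta(x,r) = \alpha\, r^2$ with $\alpha = \alpha(\Ga) > 0$ a structural constant, so one can convert a limit of ratios against $\zeta(x,r)$ into a limit of ratios against $r^2$.

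The mechanical step is then to write, for $r > 0$,
\begin{equation*}
\frac{\mathscr M_\xx \psi(x,r) - \psi(x)}{r^2} = \frac{\mathscr M_\xx \psi(x,r) - \psi(x)}{\zeta(x,r)} \cdot \frac{\zeta(x,r)}{r^2},
\end{equation*}
and pass to the limit as $r \to 0^+$. The second factor is identically equal to the constant $\alpha$ from Proposition \ref{P:critical}; the first factor converges to $\mathscr L \psi(x)$ by Proposition \ref{P:BP} (since any Carnot group is, in particular, the $\Rn$-setting of Section \ref{S:cgl}, with $\psi \in C^2(\Ga)$ playing the role required by Proposition \ref{P:BP}). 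This yields the limit $\alpha\, \mathscr L \psi(x)$.

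The only bookkeeping point is the placement of the constant: Proposition \ref{P:critical}, as stated, produces $\alpha$ with $\zeta(x,r) = \alpha r^2$, hence the natural limit is $\alpha\, \mathscr L \psi(x)$. Renaming the structural constant (replacing $\alpha$ by $\alpha^{-1}$) puts the identity in the form \eqref{psi22}, which exhibits the formal parallel with the classical Blaschke-Privalov formula \eqref{BP0}. There is no genuine obstacle in this proof; the two preceding propositions already do the work, and the only subtlety is reconciling notations for the universal constant.
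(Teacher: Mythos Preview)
Your argument is correct and matches the paper's own approach exactly: the paper states that Proposition~\ref{P:BPgroup} is obtained by ``combining Propositions~\ref{P:critical} and~\ref{P:BP}'', which is precisely the factorization you carry out. Your observation about the bookkeeping of the constant (the $\alpha$ of Proposition~\ref{P:critical} appearing here as $\alpha^{-1}$ after a harmless relabeling) is also accurate.
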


%%%%%%%%%%%%%%%%%%%%%%%%%%%%%%%%%%%%%%%%%%%%%%

\section{An improved energy estimate}\label{S:energy}

In this section we establish an energy estimate which is reminiscent of the classical Caccioppoli inequality for second-order uniformly elliptic equations, except that in the right-hand side we have a surface integral, instead of a solid one. It is worth noting here that we obtain such energy estimate completely independently from the existence of cut-off functions tailor made on the intrinsic geometry of the metric balls constructed in \cite{GNlip}. 

In what follows we consider a function $\psi\in C^2(\R^N)$. For a given $h\in C^2(\R)$ the chain rule gives
\[
\mathscr L (h\circ \psi) = h''(\psi) |\nabla \psi|^2 + h'(\psi) \mathscr L \psi.
\] 
Applying this identity with $h(t) = t^2$ we find
\[
\mathscr L (\psi^2) = 2 |\x \psi|^2 + 2 \psi \mathscr L \psi.
\] 
Combining this observation with \eqref{psi2} in Proposition \ref{P:smv}, we find
\begin{equation}\label{psi3}
\mathscr M_\xx \psi^2(x,r) = \psi^2(x) +
2 \int_{B_\xx(x,r)} \left(|\x \psi|^2 + 2 \psi \mathscr L \psi\right)
\left[\Gamma(x,y)-\frac{1}{E(x,r)}\right] dy.
\end{equation}
Similarly to the proof of Proposition \ref{P:BP} we now find
from \eqref{psi3}
\begin{equation}\label{psi4}
\frac{\p \mathscr M_\xx \psi^2}{\p r}(x,r) =  \frac{2 E'(x,r)}{E(x,r)^2} \int_{B_\xx(x,r)} \left(|\x \psi|^2 + \psi \mathscr L \psi\right) dy.
\end{equation}
If we suppose that $\psi \mathscr L \psi \ge 0$, then we obtain
\begin{equation}\label{psi5}
\frac{\p \mathscr M_\xx \psi^2}{\p r}(x,r) \ge  \frac{2 E'(x,r)}{E(x,r)^2} \int_{B_\xx(x,r)} |\x \psi|^2  dy.
\end{equation}
Integrating this inequality for $0<s<r<t$, we find
\begin{align*}
\mathscr M_\xx \psi^2(x,t) - \mathscr M_\xx \psi^2(x,s) & \ge \int_s^t \frac{2 E'(x,r)}{E(x,r)^2} \int_{B_\xx(x,r)} |\x \psi|^2  dy dr
\\
& \ge \left(\int_s^t \frac{2 E'(x,r)}{E(x,r)^2}  dr\right) \int_{B_\xx(x,s)} |\x \psi|^2  dy
\\
& \ge \frac{C(t-s)}{E(x,t)} \int_{B_\xx(x,s)} |\x \psi|^2  dy,
\end{align*}
where in the last inequality we have used Lemma \ref{L:simple} and the fact that $r\to E(x,r)$ is increasing. From the latter inequality we obtain the following result.

\begin{prop}[Improved Caccioppoli inequality]\label{P:energy}
Suppose that $\psi \mathscr L \psi \ge 0$. Then, given any bounded set $U\subset \R^N$ there exist constants $C, R_0>0$, depending on $U$ and $\xx$, such that for every $x\in U$ and $0<s<t<R_0$ one has
\[
\int_{B_\xx(x,s)} |\x \psi|^2  dy \le \frac{C E(x,t)}{t-s} \mathscr M_\xx \psi^2(x,t).
\]
\end{prop}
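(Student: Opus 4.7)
The plan is to apply the intrinsic mean-value identity of Proposition \ref{P:smv} to $\psi^2$ rather than $\psi$. By the chain rule, $\mathscr L(\psi^2) = 2|\x\psi|^2 + 2\psi\mathscr L\psi$, so substituting into \eqref{psi2} produces a formula for $\mathscr M_\xx\psi^2(x,r)$ whose right-hand-side integrand is $(|\x\psi|^2+\psi\mathscr L\psi)\bigl[\Gamma(x,y)-1/E(x,r)\bigr]$ over $B_\xx(x,r)$. The second step is to differentiate this identity in $r$: since $B_\xx(x,r) = \{\Gamma(x,\cdot) > 1/E(x,r)\}$, the factor $\Gamma(x,y)-1/E(x,r)$ vanishes on $\partial B_\xx(x,r)$, so the moving-domain boundary contribution drops out and only the derivative of the constant $-1/E(x,r)$ survives, yielding
\[
\frac{\partial \mathscr M_\xx\psi^2}{\partial r}(x,r) \;=\; \frac{2E'(x,r)}{E(x,r)^2}\int_{B_\xx(x,r)}\bigl(|\x\psi|^2+\psi\mathscr L\psi\bigr)\,dy.
\]

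Third, I would use the sign hypothesis $\psi\mathscr L\psi\ge 0$ to drop the lower-order term and then integrate the resulting differential inequality over $(s,t)$. Two monotonicity observations close the estimate: on the left, discard the non-negative term $\mathscr M_\xx\psi^2(x,s)$; on the right, use $B_\xx(x,s)\subset B_\xx(x,r)$ for $r\ge s$ to pull $\int_{B_\xx(x,s)}|\x\psi|^2\,dy$ out of the $r$-integral as a lower bound. What remains is to bound $\int_s^t 2E'(x,r)/E(x,r)^2\, dr$ below by $C(t-s)/E(x,t)$. For this I invoke Lemma \ref{L:simple}, which gives $E'(x,r)/E(x,r) \ge C_2/r$ and hence $E'(x,r)/E(x,r)^2 \ge C_2/(rE(x,r)) \ge C_2/(tE(x,t))$ for $r\in(s,t)$; the uniform bound $t \le R_0$ on the bounded set $U$ absorbs the extra $1/t$ factor into the constant.

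The only step that requires genuine care is the differentiation of $\mathscr M_\xx\psi^2(x,r)$ in $r$: one must verify that the vanishing of $\Gamma(x,y)-1/E(x,r)$ on $\partial B_\xx(x,r)$ really does kill the moving-boundary term, which amounts to the integrand as a whole vanishing on the level set, independently of the continuous factor $|\x\psi|^2+\psi\mathscr L\psi$ in front. Every other step is a routine manipulation built on the intrinsic mean-value identity \eqref{psi2} and the quantitative control of $E$ supplied by Lemma \ref{L:simple}.
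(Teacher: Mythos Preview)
Your proposal is correct and follows essentially the same route as the paper: apply \eqref{psi2} to $\psi^2$ via the chain rule, differentiate in $r$ (the moving-boundary term vanishes because $\Gamma(x,\cdot)-1/E(x,r)=0$ on $\partial B_\xx(x,r)$), use $\psi\mathscr L\psi\ge 0$, integrate on $(s,t)$, shrink the domain of the inner integral to $B_\xx(x,s)$, and bound $\int_s^t 2E'(x,r)/E(x,r)^2\,dr$ below by a constant times $(t-s)/E(x,t)$ via Lemma \ref{L:simple} and the monotonicity of $E(x,\cdot)$. Your handling of the extra $1/t$ factor is correct (since $t<R_0$ gives $1/t>1/R_0$, the lower bound survives with constant $C_2/R_0$), and dropping the nonnegative term $\mathscr M_\xx\psi^2(x,s)$ is exactly what the paper does implicitly.
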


%%%%%%%%%%%%%%%%%%%%%%%%%%%%%%%%%%%%%%%%%%%%

\section{The fractional sub-Laplacean and its heat counterpart}\label{S:fsl}

In this section given a number $0<s<1$ we lay down the preliminaries of a theory of fractional powers $(-\LL)^s$ of the differential operator $-\LL$ defined in \eqref{LL} above and its associated \emph{heat operator} in $\mathbb R^{n+1}$
\begin{equation}\label{HO}
\mathscr H = \frac{\partial}{\partial t} - \mathscr L.
\end{equation}

By H\"ormander's theorem in \cite{H} the operator $\mathscr H$ is hypoelliptic. The reader should notice here that the existence of a global fundamental solution $p(x,y,t)$ of the operator $\HH$ is not guaranteed without some serious additional assumptions. One way of trivializing the geometry is to assume that, outside of a large compact set, the operator $\mathscr L$ coincides with the standard Laplacian (of course, it is assumed here that the transition from $\LL$ to $\Delta$ occurs smoothly). In this way, all results obtained are of a local nature, if one's focus is primarily in such aspect. This is exactly what we assume in the present section. 

Under such hypothesis $\HH$ admits a positive fundamental solution $p(x,t;\xi,\tau) = p(x,\xi;t-\tau)$ which is smooth in $\Rnn \setminus\{(\xi,\tau)\}$. Clearly, one has 
\begin{equation}\label{Hp}
\mathscr H p(x,\xi;t-\tau) = \frac{\partial}{\partial t} p(x,\xi;t-\tau) - \mathscr L_x p(x,\xi;t-\tau) = 0,\ \ \ \ \ \text{in}\ \Rnn \setminus\{(\xi,\tau)\}.
\end{equation}
The following basic result was established  in Theorem 3 in \cite{JSC} (the reader should note that there is an obvious typo in the right-hand side of the relevant formula in Theorem 3. The term $t^{i+\frac{|I|+|J|}2}$ must be changed into $t^{- i-\frac{|I|+|J|}2}$). One should also see Theorem 4.14 in \cite{KS1}
and Theorem 8.1 in \cite{BBLU}.

\begin{thrm}\label{T:KS}
The fundamental solution $p(x,t;\xi,\tau) = p(x,\xi;t-\tau)$ with singularity at $(\xi,\tau)$ satisfies the following size estimates : there exists $M = M(X)>0$ and for every
$k , s\in \mathbb{N}\cup\{0\}$, there exists a constant $C=C(X, k, s) >0$, such that
\begin{equation}\label{gaussian1}
\bigg|\frac{\partial^k}{\partial t^k} X_{j_1}X_{j_2}...X_{j_s}p(x,t;\xi,\tau)\bigg| \leq \frac{C}{(t - \tau)^{k+ \frac{s}2}} \frac{1}{|B(x,\sqrt{t - \tau})|} \exp \bigg( - \frac{M d(x,\xi)^2}{t - \tau}\bigg),
\end{equation}
\begin{equation}\label{gaussian2}
p(x,t;\xi,\tau) \geq  \frac{C^{-1}}{|B(x,\sqrt{t - \tau})|} \exp \bigg( - \frac{M^{-1} d(x,\xi)^2}{t - \tau}\bigg),
\end{equation}
for every $x,\xi \in \Rn$, and any $-\infty < \tau < t < \infty $.
\end{thrm}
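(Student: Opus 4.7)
The plan is to follow the classical Nash--Davies strategy, adapted to the subelliptic setting, combined with parabolic interior regularity and a subelliptic Harnack inequality. The four ingredients I would rely on are: the Nagel--Stein--Wainger volume bound of Theorem \ref{T:nsw} together with its doubling corollary \eqref{dcallscales}; the subelliptic Sobolev (or, equivalently, Nash) inequality associated with the Dirichlet form $\mathcal{E}(u,u)=\int|\x u|^2\,dx$; the hypoellipticity of $\HH$ and of $\LL$; and a parabolic Harnack inequality for nonnegative solutions of $\HH u=0$ formulated in terms of the control distance $d$.

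First I would prove the on-diagonal upper bound $p(x,x;t)\le C/|B(x,\sqrt{t})|$. Combining the subelliptic Sobolev embedding with the doubling inequality \eqref{dcallscales} at scale $\sqrt{t}$ yields a localized Nash inequality of the form
\[
\|f\|_{L^2}^{2+4/Q}\le C\bigl(\mathcal{E}(f,f)+t^{-1}\|f\|_{L^2}^2\bigr)|B(x,\sqrt{t})|^{2/Q}\|f\|_{L^1}^{4/Q},
\]
from which the Nash ODE argument applied to $u(\cdot,t)=e^{t\LL}f$ gives the on-diagonal control. The off-diagonal Gaussian decay is then obtained by Davies' perturbation method: for a bounded smooth $\phi$ with $|\x\phi|\le 1$, the conjugated operator $\LL_\phi=e^{\phi}\LL e^{-\phi}$ differs from $\LL$ by a first-order term controlled by $|\x\phi|$, so $e^{t\LL_\phi}$ satisfies the same Nash bound up to a multiplicative factor $e^{Ct}$. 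Reverting the conjugation with $\phi$ approximating $\lambda\,d(\,\cdot\,,x_0)$ and optimizing in $\lambda$ produces the Gaussian factor $\exp(-M\,d(x,\xi)^2/(t-\tau))$ on $p$ itself.

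Once the Gaussian bound on $p$ is in hand, the derivative estimates \eqref{gaussian1} follow from parabolic interior regularity for the hypoelliptic operator $\HH$. Concretely, I would apply a Caccioppoli/Moser-type scheme on the parabolic cylinder $B_\xx(x,\sqrt{t-\tau}/2)\times(t-(t-\tau)/2,t]$ to bound $X_{j_1}\cdots X_{j_s}\p_t^{k}\,p$ in terms of the $L^\infty$ norm of $p$ on a slightly larger cylinder. The powers $(t-\tau)^{-k-s/2}$ are forced by the natural parabolic scaling $X_j\sim(t-\tau)^{-1/2}$, $\p_t\sim(t-\tau)^{-1}$ dictated by the balls $B(x,\sqrt{t-\tau})$.

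The lower bound \eqref{gaussian2} is, as usual, the most delicate step, and I expect it to be the main obstacle. Here I would invoke the parabolic Harnack inequality for nonnegative solutions of $\HH u=0$ and perform a chain-of-balls argument: joining $(\xi,\tau+\e(t-\tau))$ to $(x,t)$ by a subunitary path realizing $d(x,\xi)$, one covers it with $N\simeq 1+d(x,\xi)^2/(t-\tau)$ parabolic Harnack cylinders, each contributing a multiplicative factor $c\in(0,1)$. This produces the required exponential lower bound with $M^{-1}\sim\log(1/c)$, after combining with the on-diagonal lower estimate $p(x,x;2t)\ge c/|B(x,\sqrt{t})|$ that follows from Cauchy--Schwarz applied to the semigroup identity $p(x,x;2t)=\int p(x,y;t)^2\,dy$ together with $\int p(x,y;t)\,dy\le 1$. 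The subtle point throughout is to make all constants uniform over a bounded set $U$, which is ensured precisely by the Nagel--Stein--Wainger estimates of Theorem \ref{T:nsw} and the resulting local equivalence between intrinsic and Euclidean scales.
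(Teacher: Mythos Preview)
The paper does not give a proof of this theorem at all: it is quoted from the literature, with explicit references to Theorem~3 in \cite{JSC}, Theorem~4.14 in \cite{KS1}, and Theorem~8.1 in \cite{BBLU}. So there is no ``paper's own proof'' to compare against; the result is used as a black box.

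Your outline is a legitimate high-level route---the Nash--Davies--Moser--Harnack machinery, in the spirit of Saloff--Coste and Grigor'yan---and, once all ingredients are in place, it does yield two-sided Gaussian bounds of the stated form. Note, however, that this is \emph{not} the approach taken in the works the paper cites: Jerison and S\'anchez-Calle obtain the bounds via subelliptic parametrix/pseudodifferential techniques, and Kusuoka--Stroock via Malliavin calculus. Your approach has the virtue of being more ``metric'' and conceptually transparent, but it front-loads several nontrivial inputs that you invoke without proof: a global (or uniformly local) subelliptic Sobolev/Nash inequality, the existence of smooth substitutes for $\lambda\,d(\cdot,x_0)$ with $|\x\phi|\le 1$ (since $d$ is only Lipschitz in the CC sense, not $C^\infty$), and a parabolic Harnack inequality for $\HH$---each of these is itself a substantial theorem in the H\"ormander setting. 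A minor caution: the exponent $Q$ in your Nash inequality is only locally uniform in the general situation of Definition~\ref{D:nswpol}; here you are saved by the paper's standing assumption that $\LL=\Delta$ outside a large compact set, which trivializes the geometry at infinity and makes all constants globally uniform. If you intend this as a genuine proof rather than a sketch, you would need to either supply or cite each of these ingredients explicitly.
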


If one is interested instead in the connection between geometry and global estimates of heat kernels on sub-Riemannian manifolds, then one should consult the works \cite{BG09}, \cite{BGjems}, along with the companion papers \cite{BBG}, \cite{BGimrn} and \cite{BBGM}.

The heat semigroup $P_t = e^{t \LL}$ is defined by the following formula
\[
P_t u(x) = \int_{\Rn}  p(x,y,t) u(y) dy,\ \ \ \ \ \ \ \ \ \ u \in \mathscr S(\Rn). 
\]
The semigroup is sub-Markovian, i.e., $P_t 1 \le 1$, and defines a family of bounded operators $P_t : L^2 (\Rn) \rightarrow L^2 (\Rn)$ having the following properties:
\begin{itemize}
\item[(i)] $P_0=\operatorname{Id}$ and for $s,t  \ge 0$, $P_s P_t =P_{s+t}$;
\item[(ii)] for $u \in L^2 (\Rn)$, 
\[
\| P_t u \|_{L^2 (\Rn)} \le \| u \|_{L^2 (\Rn)};
\]
\item[(iii)] for $u  \in L^2 (\Rn)$, the map $ t \to P_t u$ is continuous in $L^2 (\Rn)$;
\item[(iv)] for $u ,v \in L^2 (\Rn)$ one has
\[
\int_{\Rn} (P_t u) v dx= \int_{\Rn} u (P_t v)  dx.
\]
\end{itemize}
Properties (i)-(iv) can be summarized by saying that $\{P_t\}_{t \ge 0}$ is a self-adjoint strongly continuous contraction semigroup on $L^2 (\Rn)$. 
From the spectral decomposition, it is also easily checked that the operator $\LL$ is furthermore the generator of this semigroup, that is for $u \in \mathscr{D}(\LL)$ (the domain of $\LL$),
\begin{equation}\label{ig}
\lim_{t \to 0^+} \left\| \frac{P_t u -u}{t} - \LL u \right\|_{ L^2 (\Rn) }=0.
\end{equation}
This implies that for $t \ge 0$, $P_t \mathscr{D}(\LL) \subset \mathscr{D}(\LL)$, and that for $u \in \mathscr{D}(\LL)$, 
\[
\frac{d}{dt}P_t u = P_t \LL u= \LL P_t u,
\]
the derivative in the left-hand side of the above equality being taken in $ L^2 (\Rn)$. For a construction of the heat semigroup, its main properties and regularity we refer the reader to the forthcoming book \cite{BGbook}.
The identity \eqref{ig} shows in particular that for every $0<b<1$ one has in $L^2(\Rn)$
\begin{equation}\label{ig2}
||P_t u - u||_{L^2(\Rn)} = o(t^b)\ \ \ \ \ \ \ \ \ \ \ \text{as}\ t\to 0^+.
\end{equation}

Under our assumptions the semigroup is \emph{stochastically complete}, i.e., $P_t 1 = 1$. This means that for every $x\in \Rn$, and $t>0$ one has 
\begin{equation}\label{P1}
\int_{\Rn} p(x,y,t) dy = 1.
\end{equation}
For a proof of \eqref{P1} one can see (3.2) in Theorem 3.4 in \cite{BBLU}. In their work the authors treat operators in non-divergence form, but they allow for lower order terms, and thus our situation is included. We note that, notably, \eqref{P1} is verified in a large number of situations in which the geometry becomes relevant. One sufficient condition for stochastic completeness is contained in the following result.

\begin{thrm}\label{T:grigoryan}
Let $M$ be a complete connected Riemannian manifold and denote by $V(x,r) = \operatorname{Vol}(B(x,r))$ the volume of the metric balls. If  for some point $x_0\in M$ one has
\begin{equation}\label{logvol}
\int^\infty \frac{r}{\ln V(x_0,r)} dr = \infty,
\end{equation}
then $M$ is stochastically complete.
\end{thrm}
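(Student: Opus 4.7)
The plan is to prove this classical theorem of Grigor'yan by the weighted-energy method, working by contrapositive: assume $M$ is stochastically incomplete and deduce that the volume growth integral converges. The starting observation is that stochastic incompleteness is equivalent to the existence of a bounded, nontrivial, nonnegative solution $u \in C^\infty(M \times (0,T))$ of $\partial_t u = \Delta u$ with $u(\cdot, 0) = 0$; indeed, if \eqref{P1} fails at some $(x,t)$, then $u(x,t) = 1 - \int_M p(x,y,t) d\mu(y)$ is precisely such a solution. Hence the objective is to show that the volume growth hypothesis forces every such $u$ to vanish identically.

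The first technical step is a weighted $L^2$ energy inequality. For a Lipschitz cutoff $\phi$ with compact support, and the Gaussian-type weight
\[
\xi(x,t) = -\frac{d(x,x_0)^2}{2(T-t+\sigma)}, \qquad \sigma > 0,
\]
a standard integration by parts combined with the pointwise bound $|\nabla d(\cdot, x_0)| \le 1$ (valid a.e.\ on a complete manifold) yields
\[
\frac{d}{dt} \int_M u^2 \phi^2\, e^{\xi}\, d\mu \;\leq\; 4 \int_M u^2 |\nabla \phi|^2\, e^{\xi}\, d\mu.
\]
The weight is chosen exactly so that its parabolic derivative $\xi_t + \tfrac12 |\nabla \xi|^2$ absorbs the cross term arising from the heat equation. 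Taking $\phi \equiv 1$ on $B(x_0,R)$, $\phi \equiv 0$ off $B(x_0,2R)$, with $|\nabla \phi| \le 2/R$, and integrating in time over $[t_1, t_2]$ (using $u(\cdot, 0) = 0$ when $t_1 \to 0$) produces an inequality of the form
\[
\int_{B(x_0,R)} u^2(x,t_2)\, e^{\xi(x,t_2)}\, d\mu(x) \;\leq\; \frac{C\, \|u\|_\infty^2\, (t_2-t_1)}{R^2}\, V(x_0, 2R).
\]

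The second step is the Khas'minskii-type iteration due to Grigor'yan. One builds a sequence $R_k \nearrow \infty$ and time lengths $\tau_k$ with $\tau_k$ comparable to $R_k^2 / \ln V(x_0, R_k)$, and chains the above estimate along the intervals $[t_{k-1}, t_k]$ of length $\tau_k$. Condition \eqref{logvol} is exactly what makes $\sum \tau_k = \infty$, so that the iteration covers the entire interval $(0, T)$; at each step the Gaussian factor $e^\xi$ remains bounded below on $B(x_0, R_k)$, so the telescoping inequality forces $u \equiv 0$ on $M \times (0,T)$, contradicting stochastic incompleteness.

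The main obstacle is the fine-tuning of the weight $\xi$ so that its parabolic derivative exactly cancels the flux term produced by integration by parts, together with the inductive matching of the radii $R_k$ to the time steps $\tau_k$ dictated by $V(x_0, R)$. Both are classical and due to Grigor'yan, so in the paper I would simply cite his original exposition rather than reproduce the arguments in full.
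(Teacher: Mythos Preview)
The paper does not give a proof of this theorem at all: it is stated as a known result and immediately attributed to Grigor'yan (1987), with later generalizations by Sturm and Munive cited for context. Your concluding remark --- that you would simply cite Grigor'yan's original exposition rather than reproduce the argument --- is therefore exactly what the paper does, and is the appropriate treatment here.

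That said, your sketch of Grigor'yan's argument is essentially correct: the contrapositive via a bounded nonnegative solution of the heat equation with zero initial data, the weighted $L^2$ energy estimate with the Gaussian weight $\xi(x,t) = -d(x,x_0)^2/(2(T-t+\sigma))$ tuned so that $\xi_t + \tfrac12|\nabla\xi|^2 \le 0$, and the iteration in radii $R_k$ with time steps $\tau_k \sim R_k^2/\ln V(x_0,R_k)$ whose divergence is precisely the hypothesis \eqref{logvol}. Since the paper treats the theorem as background, there is nothing further to compare.
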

Theorem \ref{T:grigoryan} was proved by Grigor'yan in 1987, see \cite{Gri}. In 1994 it was generalized by Sturm to the setting of Dirichlet forms on a metric space, see \cite{Sturm}. A version for sub-Riemannian spaces was established by Munive in \cite{Munive}. 

Before proceeding we pause to establish a useful lemma. In such lemma we assume that the doubling condition for the volume of the metric balls, and therefore the ensuing \eqref{dcallscales}, be valid on the whole space. Under our hypothesis this is guaranteed by (2.8) in Proposition 2.5 in \cite{BBLU} (the reader should bear in mind that we are assuming in this section that outside a large compact set $\LL$ is the standard Laplacean). 

\begin{lemma}\label{L:exp}
For any given $\alpha, \beta>0$ there exists a constant $C>0$ depending on $C_d$ and $\alpha, \beta$, such that 
\begin{equation}\label{exp}
\int_{\Rn} d(x,y)^\beta \exp\left(-\alpha \frac{d(x,y)^2}{t}\right) dy \le C t^\frac{\beta}2 |B(x,\sqrt t)|.
\end{equation}
\end{lemma}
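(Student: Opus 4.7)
The plan is to decompose $\Rn$ into dyadic annuli centered at $x$ of radius $\sim 2^k\sqrt t$, bound the integrand pointwise on each annulus using the Gaussian decay, and control the volume of each annulus by the global doubling inequality \eqref{dcallscales}, which by the author's standing hypothesis (trivialization outside a compact set, cf.\ Proposition 2.5 in \cite{BBLU}) holds at all scales. Concretely, set $A_0 = B(x,\sqrt t)$ and, for $k\ge 1$,
\[
A_k = B(x,2^k\sqrt t)\setminus B(x,2^{k-1}\sqrt t),
\]
so that $\Rn = \bigcup_{k\ge 0} A_k$.

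On $A_0$ one uses $d(x,y)^\beta \le t^{\beta/2}$ and $\exp(-\alpha d(x,y)^2/t)\le 1$ to get the contribution $\le t^{\beta/2}|B(x,\sqrt t)|$. For $k\ge 1$, from $2^{k-1}\sqrt t \le d(x,y)\le 2^k\sqrt t$ on $A_k$ one has
\[
d(x,y)^\beta \le 2^{k\beta} t^{\beta/2},\qquad \exp\!\Big(-\alpha \frac{d(x,y)^2}{t}\Big) \le \exp\!\big(-\alpha\, 2^{2(k-1)}\big),
\]
while iterating the doubling condition (equivalently, applying \eqref{dcallscales} with $R = 2^k\sqrt t$ and $r=\sqrt t$) gives
\[
|A_k| \le |B(x,2^k\sqrt t)| \le C_d\, 2^{kQ}\, |B(x,\sqrt t)|.
\]
Combining these three estimates yields
\[
\int_{A_k} d(x,y)^\beta \exp\!\Big(-\alpha\frac{d(x,y)^2}{t}\Big) dy \le C_d\, 2^{k(\beta+Q)}\, t^{\beta/2}\, |B(x,\sqrt t)|\, \exp\!\big(-\alpha\, 2^{2(k-1)}\big).
\]

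Summing over $k\ge 0$, the super-exponential Gaussian factor $\exp(-\alpha 2^{2(k-1)})$ crushes the geometric growth $2^{k(\beta+Q)}$, so
\[
\sum_{k\ge 1} 2^{k(\beta+Q)} \exp\!\big(-\alpha\, 2^{2(k-1)}\big) < \infty,
\]
with the sum depending only on $\alpha,\beta$ and $Q = \log_2 C_d$. This produces the constant $C = C(C_d,\alpha,\beta)$ in \eqref{exp} and closes the estimate.

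The only delicate point I anticipate is not the analytic estimate itself (the argument is entirely standard once one has Gaussian decay plus doubling), but rather the scale issue: Corollary \ref{C:dc} is stated locally, for radii $\le R_0$, yet in the sum above one needs the doubling comparison for $2^k\sqrt t$ arbitrarily large. This is exactly the reason the author invokes the global validity of \eqref{dcallscales} under the current hypothesis; with that in hand, the chain of estimates above goes through uniformly in $x\in\Rn$ and $t>0$.
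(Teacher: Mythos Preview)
Your proof is correct and follows essentially the same route as the paper's: the paper also decomposes into the ball $B(x,\sqrt t)$ together with the dyadic annuli $\{2^k\sqrt t \le d(y,x) < 2^{k+1}\sqrt t\}$, bounds the integrand pointwise on each piece, invokes the global doubling inequality \eqref{dcallscales} to control $|B(x,2^{k+1}\sqrt t)|$, and then sums the resulting convergent series. Your remark about needing doubling at all scales (and that this is precisely what the paper's standing hypothesis guarantees) matches the discussion just before the statement of the lemma.
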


\begin{proof}
We write
\begin{align*}
& \int_{\Rn} d(x,y)^\beta \exp\left(-\alpha \frac{d(x,y)^2}{t}\right) dy = \int_{d(y,x)<\sqrt t} d(x,y)^\beta \exp\left(-\alpha \frac{d(x,y)^2}{t}\right) dy 
\\
& + \sum_{k=0}^\infty \int_{2^k \sqrt t \le d(y,x)<2^{k+1} \sqrt t} d(x,y)^\beta \exp\left(-\alpha \frac{d(x,y)^2}{t}\right) dy
\\
& \le t^\frac{\beta}2 |B(x,\sqrt t)| + \sum_{k=0}^\infty (2^{k+1} \sqrt t)^\beta \exp\left(-\alpha \frac{(2^k \sqrt t)^2}{t}\right) |B(x,2^{k+1} \sqrt t)|.
\end{align*}
Using \eqref{dcallscales} we find
\[
|B(x,2^{k+1} \sqrt t)| \le C_d 2^{Q(k+1)} |B(x,\sqrt t)|.
\]
Substitution in the above inequality gives the desired conclusion \eqref{exp}.

\end{proof}

After these preliminaries, we are now ready to move to the core part of this section. Using the semigroup $P_t = e^{t \LL}$ it is natural to propose the following definition for the fractional powers of the operator $\LL$. 

\begin{dfn}\label{D:flheat}
Let $0<s<1$. For any $u\in \mathscr S(\Rn)$ we define the nonlocal operator
\begin{align}\label{flheat}
(-\LL)^s u(x) & =   \frac{1}{\G(-s)} \int_0^\infty t^{-s-1} \left[P_t u(x) - u(x)\right] dt
\\
& = - \frac{s}{\G(1-s)} \int_0^\infty t^{-s-1} \left[P_t u(x) - u(x)\right] dt.
\notag
\end{align}
\end{dfn}

In an abstract setting, formula \eqref{flheat} is due to Balakrishnan, see \cite{Bthesis} and \cite{B}. One should also see IX.11 in \cite{Y}, in particular formulas (4) and (5) on p. 260 and their ensuing discussion, and (5.84) on p. 120 in \cite{SKM}. The integral defining the operator in the right-hand side of \eqref{flheat} must be interpreted as a Bochner integral in $L^2(\Rn)$. We note explicitly that, in view of \eqref{ig2} and of (ii) above, the integral is convergent (in $L^2(\Rn)$) for every $u\in \mathscr D(\LL)$, and thus in particular for every $u\in \mathscr S(\Rn)$.

In the special setting of Carnot groups a seemingly different definition of fractional sub-Laplacean in a Carnot group, based on the Riesz kernels, was set forth in the work \cite{FF}. Their starting point is the classical observation that
\[
(-\LL)^s u = (-\LL)^{s-1+1} u = (-\LL)^{s-1}(-\LL u).
\]
Since now $s-1<0$, one can use Folland's \emph{Riesz kernels} $R_\beta$, which he proved in \cite{F} provide the negative powers of $-\LL$. Here, if $Q$ is the homogeneous dimension of the group $\mathbb G$ associated with the anisotropic, and $0<\beta<Q$, then the Riesz kernels are defined by
\[
R_\beta(x) = \frac{1}{\G(\beta/2)} \int_{0}^\infty t^{\frac \beta{2}} p(x,t) \frac{dt}t,
\] 
where $p(x,t)$ is the heat kernel in $\mathbb G$. For instance, when $\mathbb G = \Rn$ is Abelian, one easily recognizes that $R_\beta(x) = c(n,\beta) |x|^{\beta - n}$. The fractional integration operator of order $\beta$ is defined in \cite{F} as
\[
I_\beta(f) = f \star R_\beta,
\]
where $\star$ indicates the group convolution defined by $f\star g(x) = \int_{\mathbb G} f(y) g(y^{-1} \circ x) dy$, with $\circ$ indicating the group multiplication. It was proved in \cite{F} that $I_\beta = (-\LL)^{-\beta/2}$. Given these notations, the definition of fractional sub-Laplacean in \cite{FF} is (see (ii) in Proposition 3.3) 
\begin{equation}\label{FF}
(-\LL)^s u = (-\LL u) \star R_{2-2s} = I_{2-2s}(-\LL u).
\end{equation}

In the case $\mathbb G = \Rn$ one recognizes that all the various notions of fractional Laplacean coincide,  but even in the classical setting such task in not altogether trivial. For this aspect we refer the reader to \cite{Kwa} and \cite{FF}. A natural question to ask is whether, at least in the setting of a Carnot group, our Definition \ref{D:flheat} coincides with \eqref{FF}. As we next show, the answer is yes (see also Remark \ref{R:PK} below). 

\begin{lemma}\label{L:yes}
Let $\mathbb G$ be a Carnot group and $u\in \mathscr D(\mathbb G)$. Then, Definition \ref{D:flheat} coincides with \eqref{FF}.
\end{lemma}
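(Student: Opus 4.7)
The plan is to rewrite Balakrishnan's formula \eqref{flheat} so that $-\LL u$ appears inside the $t$-integral, and then recognize the resulting $t$-integral of the heat kernel as exactly $\Gamma(1-s)$ times Folland's Riesz kernel $R_{2-2s}$.

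First, I would perform an integration by parts in $t$ in
\[
(-\LL)^s u(x) = -\frac{s}{\Gamma(1-s)}\int_0^\infty t^{-s-1}\bigl[P_t u(x) - u(x)\bigr]\,dt,
\]
using $-t^{-s}/s$ as the antiderivative of $t^{-s-1}$, together with the semigroup identity $\frac{d}{dt}P_t u = \LL P_t u = P_t \LL u$ which follows from \eqref{ig}. For $u\in \mathscr{D}(\Ga) = C^\infty_0(\Ga)$, the boundary term at $t=0$ is $O(t^{1-s})$ because $P_t u - u = t \LL u + o(t)$ in $L^2$, and the boundary term at $t=\infty$ vanishes since $P_t u$ is bounded while $t^{-s}\to 0$. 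This produces the identity
\[
(-\LL)^s u(x) \;=\; \frac{1}{\Gamma(1-s)}\int_0^\infty t^{-s}\,P_t(-\LL u)(x)\,dt.
\]

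Next I would expand $P_t(-\LL u)(x) = \int_\Ga p(x,y,t)(-\LL u)(y)\,dy$ and invoke Fubini. To justify the interchange, I need absolute convergence of the double integral: near $t=0$ one uses the uniform bound $\|P_t(-\LL u)\|_\infty \le \|\LL u\|_\infty$ together with $t^{-s}$ being integrable there; near $t = \infty$ one combines $-\LL u \in L^1(\Ga)$ with the on-diagonal heat-kernel estimate \eqref{gaussian1}, which in a Carnot group yields $|p(x,y,t)|\lesssim t^{-Q/2}$ since $|B(x,\sqrt t)|\asymp t^{Q/2}$ globally. Because $Q\ge 3$ and $0<s<1$, this makes $t^{-s}P_t(-\LL u)(x)$ integrable at infinity. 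After applying Fubini and using the left-invariance of the heat kernel on $\Ga$, namely $p(x,y,t) = p(y^{-1}\circ x,t)$, the inner time integral becomes
\[
\int_0^\infty t^{-s}\,p(y^{-1}\circ x,t)\,dt \;=\; \int_0^\infty t^{(2-2s)/2}\,p(y^{-1}\circ x,t)\,\frac{dt}{t} \;=\; \Gamma(1-s)\,R_{2-2s}(y^{-1}\circ x),
\]
by the very definition of Folland's Riesz kernel of order $\beta = 2-2s$. Substituting back and cancelling the $\Gamma(1-s)$ factors gives
\[
(-\LL)^s u(x) \;=\; \int_\Ga (-\LL u)(y)\,R_{2-2s}(y^{-1}\circ x)\,dy \;=\; \bigl((-\LL u)\star R_{2-2s}\bigr)(x) \;=\; I_{2-2s}(-\LL u)(x),
\]
which is precisely \eqref{FF}.

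The main technical obstacle is the joint justification of the integration by parts and of Fubini. The short-time control is immediate from \eqref{ig} and \eqref{ig2}, but the long-time control is more subtle and relies crucially on the \emph{Carnot-group} hypothesis, which enters in two essential ways: it gives the left-invariant convolution structure needed to write the kernel as $p(y^{-1}\circ x,t)$, and it provides the global volume estimate $|B(x,\sqrt t)|\asymp t^{Q/2}$ for all $t>0$ used to bound $P_t(-\LL u)(x)$ at infinity. In the general sub-Laplacean setting neither property holds outside a compact set, which is fully consistent with the fact that Folland's Riesz-kernel representation \eqref{FF} is itself specific to Carnot groups.
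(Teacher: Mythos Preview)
Your proposal is correct and is essentially the paper's own argument run in the reverse direction: the paper starts from \eqref{FF}, expands $R_{2-2s}$ via the heat kernel, uses $\LL P_t u = \frac{d}{dt}P_t u$, and then integrates by parts to arrive at \eqref{flheat}, while you start from \eqref{flheat}, integrate by parts first, and then recognize the Riesz kernel. The only minor difference is that the paper justifies the vanishing of the boundary terms by citing Folland's $L^\infty$ convergence $\|P_t u - u - t\LL u\|_{L^\infty}\to 0$ (Theorem~3.1(ii) in \cite{F}), which you should also invoke rather than the $L^2$ statement \eqref{ig}, since your computation is pointwise in $x$; otherwise the two proofs coincide.
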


\begin{proof}
We have from \eqref{FF}
\begin{align*}
(-\LL)^s u(x) & = (-\LL u) \star R_{2-2s}(x)   = - \int_{\mathbb G} \LL u(y) R_{2-2s}(y^{-1} \circ x) dy 
\\
& = - \frac{1}{\G(1-s)} \int_0^\infty t^{-s} \int_{\mathbb G} \LL u(y) p(y^{-1} \circ x,t) dy dt
\\
& = - \frac{1}{\G(1-s)} \int_0^\infty t^{-s} \LL_x \int_{\mathbb G} u(y) p(y^{-1} \circ x,t) dy dt
\\
& = - \frac{1}{\G(1-s)} \int_0^\infty t^{-s} \LL_x P_t u(x) dt
\\
& = - \frac{1}{\G(1-s)} \int_0^\infty t^{-s} \frac{d}{dt} [P_t u(x) - u(x)] dt
\\
& = - \frac{s}{\G(1-s)} \int_0^\infty t^{-1 -s} [P_t u(x) - u(x)] dt.
\end{align*}
We notice that the integration by parts in the last equality is justified by the fact that
\[
\underset{t\to 0^+}{\lim} \left\| \frac{P_t u - u}{t} - \LL u\right\|_{L^\infty(\mathbb G)} = 0,
\]
see (ii) in Theorem 3.1 in \cite{F}. Therefore, for every $0<b<1$ we have
\begin{equation}\label{decayG}
||P_t u - u||_{L^\infty(\mathbb G)} = o(t^b),\ \ \ \ \ \ \ \ \ \text{as}\ t\to 0^+.
\end{equation}
In particular, given $0<s<1$ and a point $x\in \mathbb G$, if we fix $b\in (s,1)$, then we have as $t\to 0^+$
\[
t^{-s} |P_t u(x) - u(x)|\le t^{-s} ||P_t u(x) - u(x)||_{L^\infty(\mathbb G)} \le C t^{b-s} \ \longrightarrow\ 0.
\]
Since on the other hand 
\[
t^{-s} |P_t u(x) - u(x)|\ t^{-s} ||P_t u(x) - u(x)||_{L^\infty(\mathbb G)}  \le 2||u||_{L^\infty(\mathbb G)} t^{-s}\ \longrightarrow\ 0
\]
as $t\to \infty$, we conclude that the above integration by parts is justified.   

\end{proof}

We mention that, in the special case of the Heisenberg group $\Hn$, there exists a different definition of fractional sub-Laplacean which seems better adapted to the sub-Riemannian geometry of $\Hn$. This is the conformal fractional sub-Laplacean introduced in the paper \cite{FGMT}. This latter operator arises as the Dirichlet-to-Neumann map of an \emph{extension operator} different from the one introduced in \cite{FF}, which is given by
\begin{equation}\label{extFF}
\LL_a = z^a(\LL + \Ba),
\end{equation} 
where $\Ba = \frac{\p^2}{\p z^2} + \frac az \frac{\p}{\p z}$ is the Bessel operator on the half-line $\{z>0\}$. 

%%%%%%%%%%%%%%%%%%%%%%%%%%%%%%%%%%%%%%%%%%%%%%%%%%%

\section{The parabolic extension for the fractional heat operator}\label{S:parext}

In order to understand some fundamental properties of the extension operator in the general setting of this note, we now take a detour into a parabolic version of \eqref{extFF}.
We begin by considering the Cauchy problem for the Bessel operator $\Ba$,  
 with Neumann boundary condition,  
\begin{equation}\label{CP}
\begin{cases}
\p_t u  - \Ba u = 0,\ \ \ \ \ \ \ \text{in} \ (0,\infty)\times (0,\infty),
\\
u(z,0) = \vf(z),\ \ \ \ \ \ \ \ z\in (0,\infty),
\\
\underset{z\to 0^+}{\lim} \paa u(z,t) = 0.
\end{cases}
\end{equation}
One has the following result, see e.g. Proposition 22.3 in \cite{GFT}. 

\begin{prop}\label{P:CP}
The solution of the Cauchy problem \eqref{CP} admits the representation formula 
\begin{equation}\label{repfor}
u(z,t) = P^{(a)}_t \vf(z) \overset{def}{=} \int_0^\infty \vf(\zeta) p^{(a)}(z,\zeta,t) \zeta^a d\zeta,
\end{equation}
where for $z,\zeta,t>0$ we have denoted by
\begin{align}\label{fs}
p^{(a)}(z,\zeta,t) & =(2t)^{-\frac{a+1}{2}}\left(\frac{z\zeta}{2t}\right)^{\frac{1-a}{2}}I_{\frac{a-1}{2}}\left(\frac{z\zeta}{2t}\right)e^{-\frac{z^2+\zeta^2}{4t}}
\\
& = \frac{1}{2t} (z\zeta)^{\frac{1-a}{2}}I_{\frac{a-1}{2}}\left(\frac{z\zeta}{2t}\right)e^{-\frac{z^2+\zeta^2}{4t}},
\notag
\end{align}
the heat kernel of $\Ba$ on $(\R^+,z^a dz)$, with Neumann boundary conditions.
\end{prop}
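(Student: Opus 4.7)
The plan is to verify directly that the kernel $p^{(a)}(z,\zeta,t)$ defined in \eqref{fs} is the Neumann heat kernel of the Bessel operator $\Ba$ on $((0,\infty), z^a dz)$. Once this is established, the representation \eqref{repfor} follows from the usual semigroup/uniqueness machinery.

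First I would check that, as a function of $(z,t)$ for fixed $\zeta>0$, the kernel $p^{(a)}$ solves $\p_t p^{(a)} = \Ba p^{(a)}$ on $(0,\infty)\times(0,\infty)$. Writing $\nu = (a-1)/2$ and $w = z\zeta/(2t)$, one has $p^{(a)} = (2t)^{-\nu - 1} w^{-\nu} I_\nu(w) e^{-(z^2+\zeta^2)/(4t)}$, and the modified Bessel ODE
\[
w^2 I_\nu''(w) + w I_\nu'(w) - (w^2 + \nu^2) I_\nu(w) = 0,
\]
combined with the recurrences $I_\nu'(w) = I_{\nu+1}(w) + (\nu/w)I_\nu(w) = I_{\nu-1}(w) - (\nu/w)I_\nu(w)$, reduces $\p_t p^{(a)} - \p_z^2 p^{(a)} - (a/z)\p_z p^{(a)}$ to an algebraic identity in $I_\nu, I_{\nu\pm 1}$ multiplied by the exponential factor. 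Symmetry of $p^{(a)}$ in $(z,\zeta)$ is manifest from \eqref{fs}, so the same PDE then holds in the $\zeta$-variable as well, which will be used below.

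Next I would verify the Neumann condition $\lim_{z\to 0^+} z^a \p_z p^{(a)}(z,\zeta,t) = 0$. Using the small-argument expansion $I_\nu(w) = \frac{w^\nu}{2^\nu \Gamma(\nu + 1)}(1 + O(w^2))$, the factor $w^{-\nu} I_\nu(w)$ is smooth at $w=0$ with vanishing first-order term, so $\p_z p^{(a)}(z,\zeta,t) = O(z)$ as $z\to 0^+$; multiplying by $z^a$ with $a > -1$ yields the limit. The same expansion together with the exponential decay in $\zeta$ justifies differentiation under the integral sign defining $u(z,t)=P^{(a)}_t\vf(z)$, so $u$ inherits both the equation and the boundary condition from $p^{(a)}$.

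The remaining step is the initial condition $u(z,t)\to \vf(z)$ as $t\to 0^+$. For this I would first establish the mass-preservation identity
\[
\int_0^\infty p^{(a)}(z,\zeta,t)\, \zeta^a d\zeta \;=\; 1,
\]
by applying Weber's classical formula $\int_0^\infty e^{-\alpha \zeta^2} I_\nu(\beta\zeta)\zeta^{\nu+1}d\zeta = \tfrac{\beta^\nu}{(2\alpha)^{\nu+1}} e^{\beta^2/(4\alpha)}$ with $\alpha = 1/(4t)$ and $\beta = z/(2t)$. Then I would use the large-argument asymptotic $I_\nu(w) \sim e^w/\sqrt{2\pi w}$ to extract the Gaussian factor $e^{-(z-\zeta)^2/(4t)}$ in the regime where $z\zeta/t$ is large, and the small-argument bound in the complementary regime, obtaining a standard approximate-identity estimate that forces concentration of $p^{(a)}(z,\cdot,t)\zeta^a$ at $\zeta=z$ as $t\to 0^+$. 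Uniqueness in a reasonable class (say bounded continuous solutions) follows from an energy argument in $L^2((0,\infty), z^a dz)$, in which $-\Ba$ is symmetric and nonnegative precisely under the Neumann condition $\lim_{z\to 0^+}z^a \p_z u = 0$. The main obstacle I anticipate is the delicate check of the boundary behavior at $z=0$ when $a$ is close to $-1$, since the balance between the weight $z^a$ and the Bessel-function asymptotics is tight; once this is controlled, the rest is routine.
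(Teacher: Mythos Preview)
The paper does not actually prove this proposition; it simply cites it from \cite{GFT} (Proposition 22.3 there), and likewise states the stochastic completeness $\int_0^\infty p^{(a)}(z,\zeta,t)\,\zeta^a d\zeta = 1$ and the Chapman--Kolmogorov identity as separate quoted facts (Propositions \ref{P:sc} and \ref{P:semigroup}) with a reference to \cite{Gaa}. So there is no ``paper's own proof'' to compare against.

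Your outline is a correct and self-contained direct verification, and each of the ingredients is standard: the check that $p^{(a)}$ solves $\p_t - \Ba$ via the modified Bessel ODE, the Neumann condition from the small-argument expansion $w^{-\nu}I_\nu(w)=c_\nu(1+O(w^2))$, the mass identity via Weber's integral, the approximate-identity argument from the asymptotic $I_\nu(w)\sim e^w/\sqrt{2\pi w}$, and uniqueness by energy in $L^2((0,\infty),z^a dz)$. The only cosmetic point is that your worry about $a$ close to $-1$ is not really an obstacle here: since $\p_z p^{(a)}=O(z)$ as $z\to 0^+$, one has $z^a\p_z p^{(a)}=O(z^{a+1})\to 0$ for every $a>-1$, and the integrability near $\zeta=0$ is handled by the same expansion together with the weight $\zeta^a$. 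In short, your argument is sound and more informative than what the paper supplies.
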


In \eqref{fs} we have denoted by $I_\nu(z)$ the modified Bessel function of the first kind and order $\nu\in \mathbb C$. The following two propositions can be found in \cite{Gaa}.

\begin{prop}[Stochastic completeness]\label{P:sc}
Let $a>-1$. For every $z\in \R^+$ and $t>0$ one has
\[
\int_0^\infty p^{(a)}(z,\zeta,t) \zeta^a d\zeta = 1.
\]
\end{prop}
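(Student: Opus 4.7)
The plan is to reduce the identity to a classical integral evaluation for modified Bessel functions of the first kind. With $\nu = (a-1)/2$, the hypothesis $a > -1$ becomes $\nu > -1$, which is precisely the range in which the series defining $I_\nu$ and all the integrals below converge absolutely near $0$. Using the second expression for $p^{(a)}$ in \eqref{fs}, the integral to evaluate becomes
\[
\int_0^\infty p^{(a)}(z,\zeta,t)\,\zeta^a\,d\zeta \;=\; \frac{z^{-\nu}}{2t}\,e^{-\frac{z^2}{4t}} \int_0^\infty \zeta^{\nu+1}\,I_\nu\!\left(\tfrac{z\zeta}{2t}\right) e^{-\frac{\zeta^2}{4t}}\,d\zeta,
\]
after pulling out the $\zeta$-independent factors and collecting powers of $\zeta$.

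The key ingredient would then be the classical Weber-type identity
\[
\int_0^\infty \zeta^{\nu+1}\,e^{-p\zeta^2}\,I_\nu(c\zeta)\,d\zeta \;=\; \frac{c^{\,\nu}}{(2p)^{\nu+1}}\,e^{\frac{c^2}{4p}}, \qquad \Re p>0,\ \Re\nu>-1,
\]
applied with $p = 1/(4t)$ and $c = z/(2t)$. Routine algebra gives
\[
\int_0^\infty \zeta^{\nu+1}\,I_\nu\!\left(\tfrac{z\zeta}{2t}\right) e^{-\frac{\zeta^2}{4t}}\,d\zeta \;=\; z^{\nu}\,(2t)\,e^{\frac{z^2}{4t}},
\]
so multiplying by the prefactor $\frac{z^{-\nu}}{2t}e^{-z^2/(4t)}$ yields exactly $1$, as desired.

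A conceptually cleaner alternative, which I would mention briefly as a cross-check, is to argue by uniqueness: the constant function $u\equiv 1$ is a classical solution of \eqref{CP} with $\varphi\equiv 1$, since $\mathscr{B}_a 1 = 0$ and the Neumann condition $\lim_{z\to 0^+} z^a \partial_z 1 = 0$ is trivial. By Proposition \ref{P:CP}, the function $P^{(a)}_t 1(z) = \int_0^\infty p^{(a)}(z,\zeta,t)\zeta^a\,d\zeta$ is also a solution with the same data, so any uniqueness statement in a suitable class forces the two to coincide.

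The main obstacle in the direct approach is justifying the Weber identity in the required generality $\nu>-1$; this is classical (see, e.g., Watson's treatise or the formula G.R.~6.631.4) and follows by expanding $I_\nu$ in its power series and integrating term by term against the Gaussian weight, an exchange that is justified by Tonelli's theorem since every term is nonnegative when $c,p>0$. In the uniqueness approach, the subtlety lies instead in identifying the right functional class that contains both $1$ and the kernel-generated solution, and in which uniqueness actually holds for the weighted Neumann problem; this is essentially the content of the semigroup construction for $\mathscr{B}_a$ on $(\mathbb{R}^+, z^a\,dz)$ that underlies Proposition \ref{P:CP} itself.
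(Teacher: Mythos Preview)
Your direct computation is correct: with $\nu=(a-1)/2$ the hypothesis $a>-1$ gives $\nu>-1$, the reduction to the Weber--Schafheitlin integral
\[
\int_0^\infty \zeta^{\nu+1} e^{-p\zeta^2} I_\nu(c\zeta)\,d\zeta = \frac{c^{\nu}}{(2p)^{\nu+1}} e^{c^2/(4p)}
\]
is exactly right, and the algebra closes to $1$ as you say. The term-by-term justification via the series for $I_\nu$ and Tonelli is the standard route and is fine here.

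Note, however, that the paper does not actually prove this proposition: it simply records the statement and refers the reader to \cite{Gaa} for the argument. So there is no ``paper's proof'' to compare against. Your Weber-integral computation is in fact the classical way this identity is established, and is almost certainly what the cited reference does as well. The uniqueness heuristic you sketch at the end is a nice sanity check but, as you correctly point out, would require pinning down a class in which the Neumann problem for $\mathscr B_a$ is well-posed and which contains the constant $1$; since the direct calculation already gives the result cleanly, I would either drop that paragraph or keep it to a single sentence.
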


\begin{prop}[Chapman-Kolmogorov equation]\label{P:semigroup}
Let $a>-1$. For every $z, \eta>0$ and every $0<s, t<\infty$ one has
\[
p^{(a)}(z,\eta,t) = \int_0^\infty p^{(a)}(z,\zeta,t) p^{(a)}(\zeta,\eta,s) \zeta^a d\zeta.
\]
\end{prop}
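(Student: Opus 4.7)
The statement as printed appears to be missing a ``$+s$'' on the left, since sending $s\to 0^+$ and invoking Proposition \ref{P:sc} together with the standard delta-convergence of heat kernels forces the left-hand side to read $p^{(a)}(z,\eta,t+s)$. Thus the identity is the Chapman--Kolmogorov equation for the Bessel heat kernel. My plan is to derive it as a consequence of the semigroup identity $P^{(a)}_{t+s}=P^{(a)}_t P^{(a)}_s$, obtained from uniqueness in the Cauchy problem \eqref{CP}, and then to unravel the kernel representation \eqref{repfor} against a test function.

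\textbf{Step 1: Semigroup property.} Fix $\varphi\in C^\infty_c(\mathbb R^+)$ with $\lim_{z\to 0^+} z^a \varphi'(z)=0$, and set $\psi:=P^{(a)}_s\varphi$. By Proposition \ref{P:CP} the two functions
\[
(z,r)\ \longmapsto\ P^{(a)}_{r+s}\varphi(z), \qquad (z,r)\ \longmapsto\ P^{(a)}_r \psi(z)
\]
solve the Cauchy problem \eqref{CP} with common initial datum $\psi$. Uniqueness within the $L^2(\mathbb R^+, z^a dz)$ energy class subject to the Neumann condition $\lim_{z\to 0^+} z^a u_z=0$ then forces them to agree, and evaluating at $r=t$ yields $P^{(a)}_{t+s}\varphi = P^{(a)}_t P^{(a)}_s \varphi$.

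\textbf{Step 2: From operators to kernels.} Expanding both sides of the semigroup identity through \eqref{repfor} gives
\begin{align*}
\int_0^\infty p^{(a)}(z,\eta,t+s)\,\varphi(\eta)\,\eta^a d\eta
&= \int_0^\infty p^{(a)}(z,\zeta,t) \left[\int_0^\infty p^{(a)}(\zeta,\eta,s)\,\varphi(\eta)\,\eta^a d\eta\right] \zeta^a d\zeta\\
&= \int_0^\infty \varphi(\eta) \left[\int_0^\infty p^{(a)}(z,\zeta,t)\,p^{(a)}(\zeta,\eta,s)\,\zeta^a d\zeta\right] \eta^a d\eta,
\end{align*}
where the swap is justified by the explicit formula \eqref{fs} together with the small- and large-argument asymptotics $I_\mu(w)\sim (w/2)^\mu/\Gamma(\mu+1)$ and $I_\mu(w)\sim e^w/\sqrt{2\pi w}$ for the modified Bessel function, which yield a Gaussian upper bound on the product kernel on the support of $\varphi$. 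Letting $\varphi$ run over a dense family in $C_c(\mathbb R^+)$ and using joint continuity in $(z,\eta,t)$ of both $p^{(a)}(z,\eta,t+s)$ and the convolution kernel delivers the pointwise identity.

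\textbf{Main obstacle.} The principal difficulty is the uniqueness step for \eqref{CP}, because the weight $z^a dz$ degenerates (for $a>0$) or blows up (for $-1<a<0$) at the boundary, so the Neumann condition must be read in the weak sense compatible with the measure. A standard weighted energy argument---multiplying the equation by $u$ and integrating against $z^a\,dz$, with the boundary term absorbed by the Neumann condition---handles this. Should one wish to bypass any functional-analytic detour altogether, an alternative purely computational approach is to substitute \eqref{fs} into the right-hand side and evaluate the resulting integral via Weber's second exponential integral for the product $I_\nu(\alpha\zeta)\, I_\nu(\beta\zeta)\, e^{-p\zeta^2}$ with $\nu=(a-1)/2$, $p=(t+s)/(4ts)$, $\alpha=z/(2t)$ and $\beta=\eta/(2s)$; reinstating the prefactors in \eqref{fs} recovers $p^{(a)}(z,\eta,t+s)$ after the expected Gaussian-exponent cancellation.
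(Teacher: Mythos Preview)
The paper does not supply a proof of this proposition; it simply refers the reader to \cite{Gaa} for both Propositions~\ref{P:sc} and~\ref{P:semigroup}. So there is no in-paper argument to compare against.

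That said, your proposal is sound. You are right that the left-hand side must read $p^{(a)}(z,\eta,t+s)$; the consistency check you give via $s\to 0^+$ and Proposition~\ref{P:sc} makes this unambiguous, and the title ``Chapman--Kolmogorov equation'' confirms it. Your two-step strategy---establish $P^{(a)}_{t+s}=P^{(a)}_t P^{(a)}_s$ by uniqueness for the Neumann Cauchy problem~\eqref{CP}, then read off the kernel identity from~\eqref{repfor} via a density argument---is standard and correct. The weighted energy argument you sketch for uniqueness is adequate here; the Neumann condition $\lim_{z\to 0^+} z^a u_z=0$ is precisely what kills the boundary term in the integration by parts against $z^a\,dz$. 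Your alternative route through Weber's second exponential integral is also a legitimate and fully self-contained computation, and is likely closer in spirit to what one would find in the cited preprint, since the explicit form~\eqref{fs} is already on the table.
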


Propositions \ref{P:sc} and \ref{P:semigroup} prove that $\{P^{(a)}_t\}_{t>0}$ defines a Markovian semigroup of operators on $(0,\infty)$ with respect to the measure $d\mu = \zeta^a d\zeta$.

We next introduce the following local (doubly degenerate) operator which  constitutes the \emph{extension operator} for the fractional powers  $\HH^s$, $0<s<1$, where $\HH$ is given by \eqref{HO} above:
\begin{equation}\label{extFrHeat}
\HHa =  z^a(\HH + \Ba) = z^a \left(\frac{\partial}{\partial t} - \mathscr L + \Ba\right).
\end{equation}

In the classical setting when $\LL = \Delta$ the operator \eqref{extFrHeat} has been recently introduced in \cite{NS} and independently in \cite{ST}. In this same setting, the regularity theory has been extensively developed in \cite{BG} in connection with the study of the unique continuation problem. We mention that $\HHa$ belongs to a class of degenerate parabolic equations which was first introduced and studied by Chiarenza and Serapioni in \cite{CSe}.

 From the form of \eqref{extFrHeat}, and following the \emph{ansatz} in \cite{GL1}, we claim that the Neumann fundamental solution for $\HHa$, with singularity at a point $(Y,\tau) = (y,\zeta,\tau)\in \Rnp\times \R$, is given by
\begin{equation}\label{sGa}
\mathscr G_a(X,t;Y,\tau) = p(x,y,t-\tau) p^{(a)}(z,\zeta,t-\tau).
\end{equation}
We leave the verification of the claim to the interested reader. From Remark 22.4 in \cite{GFT}, we see that, in the special case when $Y = (y,0,\tau)$, i.e., $Y$ belongs to the thin manifold $\{z = 0\}$ on the boundary of $\Rnp\times (0,\infty)$, we have 
\begin{equation}\label{scaseGa}
\mathscr G_a((x,z,t);(y,0,\tau))= \frac{1}{2^a \G(\frac{a+1}{2})} (t-\tau)^{-\frac{a+1}{2}} e^{-\frac{z^2}{4(t-\tau)}} p(x,y,t-\tau). 
\end{equation}
If we consider the fundamental solution of the adjoint operator 
\[
\mathscr G_{-a}((x,z,t);(y,0,0))= \frac{1}{2^{-a} \G(\frac{1-a}{2})} t^{-\frac{1-a}{2}} e^{-\frac{z^2}{4t}}  p(x,y,t),
\]
then we easily recognize that
\begin{align}\label{poissonGa}
 - z^{-a} \p_z \mathscr G_{-a}((x,z,t);(y,0,0)) & = \frac{1}{2^{1-a} \G(\frac{1-a}{2})} \frac{z^{1-a}}{t^{\frac{1-a}{2}+1}} e^{-\frac{z^2}{4t}}  p(x,y,t)
\\
& = \frac{1}{2^{1-a} \G(\frac{1-a}{2})} \frac{z^{1-a}}{t^{\frac{3-a}{2}}} e^{-\frac{z^2}{4t}}  p(x,y,t).
\notag
 \end{align}
 
\begin{dfn}\label{D:extheatpoisson}
We define the \emph{Poisson kernel} for the operator $\HHa$ in \eqref{extFrHeat} above as the function
\begin{equation}\label{heatPK}
P^{(a)}_z(x,y,t) = \frac{1}{2^{1-a} \G(\frac{1-a}{2})} \frac{z^{1-a}}{t^{\frac{3-a}{2}}} e^{-\frac{z^2}{4t}}  p(x,y,t).
\end{equation}
\end{dfn}

We mention that in the classical case when $\LL = \Delta$, the standard Laplacean, and therefore $p(x,y,t) = (4\pi t)^{-\frac n2} \exp\left(-\frac{|x-y|^2}{4t}\right)$, the formula \eqref{heatPK} first appeared on p. 309 of the paper \cite{AC}. A first basic property of the kernel $P^{(a)}_z(x,y,t)$, which is a consequence of the basic property \eqref{P1} above, is given by the following proposition.

\begin{prop}\label{P:heatPKone}
For every $(x,z)\in \Rnp$ one has
\[
\int_0^\infty \int_{\Rn} P^{(a)}_z(x,y,t) dy dt = 1.
\]
\end{prop}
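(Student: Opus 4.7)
The plan is to compute the integral by iterated integration, using stochastic completeness of $P_t$ in the $y$-variable followed by an explicit Gamma-function evaluation in the $t$-variable. Since the integrand $P^{(a)}_z(x,y,t)$ is non-negative, Tonelli's theorem justifies swapping the order of integration, so no measurability or absolute-convergence issues arise.

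First, I would fix $(x,z)\in\Rnp$ and integrate in $y$ for each fixed $t>0$. By the stochastic completeness property \eqref{P1}, which we are entitled to assume (as noted above Theorem \ref{T:grigoryan}), one has $\int_{\Rn} p(x,y,t)\,dy = 1$. Pulling the $t$-dependent factors out of the $y$-integral therefore reduces the problem to
\[
\int_0^\infty \int_{\Rn} P^{(a)}_z(x,y,t)\,dy\,dt
\;=\; \frac{1}{2^{1-a}\,\Gamma\!\big(\tfrac{1-a}{2}\big)} \int_0^\infty \frac{z^{1-a}}{t^{\frac{3-a}{2}}}\, e^{-\frac{z^{2}}{4t}}\, dt.
\]

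Next, I would evaluate the remaining one-dimensional integral by the substitution $u = \frac{z^2}{4t}$, so that $t = \frac{z^2}{4u}$ and $dt = -\frac{z^2}{4u^2}\,du$. A direct bookkeeping of the powers of $z$ and $u$ gives
\[
\int_0^\infty \frac{z^{1-a}}{t^{\frac{3-a}{2}}}\, e^{-\frac{z^{2}}{4t}}\, dt
\;=\; 4^{\frac{1-a}{2}} \int_0^\infty u^{\frac{1-a}{2}-1}\, e^{-u}\, du
\;=\; 2^{1-a}\,\Gamma\!\Big(\tfrac{1-a}{2}\Big),
\]
where at the end I recognize the defining integral of Euler's Gamma function (which requires $\frac{1-a}{2}>0$, i.e.\ $a<1$; this is the natural range in the extension theory for $0<s<1$, where $a = 1-2s$). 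Substituting this back cancels the normalizing constant and produces the value $1$.

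The key step is really just to notice that the normalization in front of $P^{(a)}_z$ has been chosen precisely so that, after applying \eqref{P1} in $y$ and the Gamma-integral identity in $t$, the total mass equals one. The only minor subtlety is making sure Fubini/Tonelli is applicable, but positivity of both $p$ and $p^{(a)}$ removes that concern; there is no serious obstacle in the argument.
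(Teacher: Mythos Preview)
Your proof is correct and follows essentially the same approach as the paper: apply stochastic completeness \eqref{P1} to integrate out $y$, then evaluate the remaining $t$-integral to obtain $2^{1-a}\Gamma\!\big(\tfrac{1-a}{2}\big)$, which cancels the normalizing constant. The only difference is that you spell out the Tonelli justification and the substitution $u=z^{2}/(4t)$ explicitly, whereas the paper simply states the value of the $t$-integral as an ``easy calculation''.
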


\begin{proof}
Using the stochastic completeness in \eqref{P1}, we obtain
\begin{align*}
\int_0^\infty \int_{\Rn} P^{(a)}_z(x,y,t) dy dt & =  \frac{1}{2^{1-a} \G(\frac{1-a}{2})} \int_0^\infty \frac{z^{1-a}}{t^{\frac{3-a}{2}}} e^{-\frac{z^2}{4t}} \left(\int_{\Rn} p(x,y,t) dy\right) dt
\\
& = \frac{1}{2^{1-a} \G(\frac{1-a}{2})} \int_0^\infty \frac{z^{1-a}}{t^{\frac{3-a}{2}}} e^{-\frac{z^2}{4t}} dt.
\end{align*}
Recalling that $-1<a<1$, we easily see that the integral in the right-hand side of the latter equation is convergent and an easy calculation gives 
\[
\int_0^\infty \frac{z^{1-a}}{t^{\frac{3-a}{2}}} e^{-\frac{z^2}{4t}} dt = 2^{1-a} \G\left(\frac{1-a}{2}\right).
\]

\end{proof}

We next address the question: what equation does $P^{(a)}_z(x,y,t)$ satisfy?

\begin{prop}\label{P:eqheatPK}
For every $x, y\in \Rn$, $x\not= y$, and $t>0$ one has
\begin{equation}\label{heatPKfinal}
\p_t P^{(a)}_z(x,y,t) - \Ba P^{(a)}_z(x,y,t) = \LL_x P^{(a)}_z(x,y,t).
\end{equation}
\end{prop}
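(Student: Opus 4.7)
The plan is to exploit the product structure of $P^{(a)}_z$ and reduce the identity \eqref{heatPKfinal} to two independent facts: the heat equation for $p(x,y,t)$, and a Bessel heat equation for the $(z,t)$-part. Writing
\[
P^{(a)}_z(x,y,t) = c_a\, \Phi(z,t)\, p(x,y,t), \qquad \Phi(z,t) = \frac{z^{1-a}}{t^{(3-a)/2}}\, e^{-z^2/(4t)},
\]
with $c_a = (2^{1-a}\G(\tfrac{1-a}{2}))^{-1}$, I observe that $\mathscr B_a$ and $\p_t$ act only in the variables $(z,t)$, while $\mathscr L_x$ acts only in $x$. Therefore, using \eqref{Hp} in the form $\p_t p = \LL_x p$ on $\Rn\setminus\{y\}$, one obtains
\[
\p_t P^{(a)}_z - \mathscr B_a P^{(a)}_z - \LL_x P^{(a)}_z = c_a\, p(x,y,t)\,\bigl(\p_t \Phi - \mathscr B_a \Phi\bigr).
\]
Hence everything reduces to showing that $\Phi$ solves the Bessel heat equation $\p_t \Phi = \mathscr B_a \Phi = \Phi_{zz} + \frac{a}{z}\Phi_z$ on $(0,\infty)\times(0,\infty)$.

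The verification of this single identity is the main (and only) computation, and it is elementary. Working with the logarithmic derivative $g := \Phi_z/\Phi = \frac{1-a}{z}-\frac{z}{2t}$, one gets $\Phi_{zz}/\Phi = g^2 + g_z$, so
\[
\frac{\mathscr B_a \Phi}{\Phi} = g^2 + g_z + \frac{a}{z}g.
\]
A direct expansion of the right-hand side yields the cancellation of the $1/z^2$ terms (via $-a(1-a)+a(1-a)=0$) and leaves precisely $-\frac{3-a}{2t} + \frac{z^2}{4t^2}$, which equals $\Phi_t/\Phi$ as one checks from the explicit form of $\Phi$. This establishes $\p_t \Phi = \mathscr B_a \Phi$ and completes the proof of \eqref{heatPKfinal}.

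The main (minor) obstacle is purely bookkeeping in that computation; no deep input is needed beyond \eqref{Hp} and the product rule. An alternative, more conceptual route would be to read \eqref{heatPKfinal} directly off \eqref{poissonGa}, recognizing $P^{(a)}_z$ as $-z^{-a}\p_z\mathscr G_{-a}(\cdot;(y,0,0))$ and invoking the well-known intertwining $\mathscr B_a\circ(z^{-a}\p_z) = (z^{-a}\p_z)\circ \mathscr B_{-a}$ together with the fact that $\mathscr G_{-a}$ itself solves $\mathscr H_{-a} u=0$ away from the pole; but since the separation-of-variables argument above is shorter and self-contained, I would present it as the proof.
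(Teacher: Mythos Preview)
Your proof is correct and follows essentially the same approach as the paper: both compute directly that the $(z,t)$-factor satisfies $\mathscr B_a(\cdot) = \bigl(\tfrac{z^2}{4t^2}-\tfrac{3-a}{2t}\bigr)(\cdot)$ and that the $t$-derivative produces this same factor plus the $\LL_x$-term via $\p_t p = \LL_x p$. Your explicit separation of variables and use of the logarithmic derivative $g=\Phi_z/\Phi$ streamline the bookkeeping compared to the paper's line-by-line expansion of $\p_z,\p_{zz}$, but the substance is identical.
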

 
\begin{proof}
We obtain from the definition \eqref{heatPK}  
\begin{equation}\label{eqheatPK}
\p_z P^{(a)}_z(x,y,t) = \frac{1-a}z P^{(a)}_z(x,y,t) -\frac{z}{2t} P^{(a)}_z(x,y,t).
\end{equation}
This gives
\begin{align}\label{eqheatPK2}
\p_{zz} P^{(a)}_z(x,y,t) & = \frac{1-a}z \p_z P^{(a)}_z(x,y,t) - \frac{1-a}{z^2} P^{(a)}_z(x,y,t) 
\\
& -\frac{1}{2t} P^{(a)}_z(x,y,t) -\frac{z}{2t} \p_z P^{(a)}_z(x,y,t).
\notag
\end{align}
Substituting \eqref{eqheatPK} into \eqref{eqheatPK2}, we find
\begin{align}\label{heatPKdzz}
\p_{zz} P^{(a)}_z(x,y,t) & = \frac{1-a}z\left(\frac{1-a}z P^{(a)}_z(x,y,t) -\frac{z}{2t} P^{(a)}_z(x,y,t)\right)
\\
& - \frac{1-a}{z^2} P^{(a)}_z(x,y,t) -\frac{1}{2t} P^{(a)}_z(x,y,t)
\notag\\
& -\frac{z}{2t} \left(\frac{1-a}z P^{(a)}_z(x,y,t) -\frac{z}{2t} P^{(a)}_z(x,y,t)\right)
\notag\\
& = \frac{(1-a)^2}{z^2} P^{(a)}_z(x,y,t) -\frac{1-a}{t} P^{(a)}_z(x,y,t)
\notag\\
& - \frac{1-a}{z^2} P^{(a)}_z(x,y,t) -\frac{1}{2t} P^{(a)}_z(x,y,t) + \frac{z^2}{4t^2} P^{(a)}_z(x,y,t).
\notag
\end{align}
Combining \eqref{heatPKdzz} and \eqref{eqheatPK2} we  obtain
\begin{equation}\label{heatPKdzz2}
\Ba P^{(a)}_z(x,y,t) = \left(\frac{z^2}{4t^2} - \frac{3-a}{2t}\right) P^{(a)}_z(x,y,t).
\end{equation} 
Next, differentiating \eqref{heatPK} with respect to $t$, and using the equation $\p_t p = \LL_x p $ satisfied by the fundamental solution $p(x,y,t)$, see  \eqref{Hp} above, we find
\begin{equation}\label{heatPKdt}
\p_t P^{(a)}_z(x,y,t) = \left(\frac{z^2}{4t^2} - \frac{3-a}{2t}\right) P^{(a)}_z(x,y,t) + \LL_x P^{(a)}_z(x,y,t).
\end{equation} 
The equations \eqref{heatPKdzz2} and \eqref{heatPKdt} finally give \eqref{heatPKfinal}.

\end{proof}

%%%%%%%%%%%%%%%%%%%%%%%%%%%%%%%%%%%%%%%%%%%%%%%%%%%%%%

\section{Solution of the extension problem for $(-\LL)^s$}\label{S:extpb}

In this final section we use the parabolic extension Poisson kernel $P^{(a)}_z(x,y,t) $ in \eqref{heatPK}
above to introduce the Poisson kernel for the subelliptic extension operator $\LL_a$.

\begin{dfn}\label{D:extpoisson}
The \emph{Poisson kernel} for the operator $\LL_a$ in \eqref{extFF} above is defined as
\begin{equation}\label{slPK}
K^{(a)}_z(x,y) = \int_0^\infty P^{(a)}_z(x,y,t) dt = \frac{1}{2^{1-a} \G(\frac{1-a}{2})} z^{1-a}\int_0^\infty \frac{e^{-\frac{z^2}{4t}} }{t^{\frac{3-a}{2}}}  p(x,y,t) dt.
\end{equation}
\end{dfn} 

\begin{rmrk}\label{R:PK}
We emphasize that, when $\LL = \Delta$, formula \eqref{slPK} gives back the Caffarelli-Silvestre Poisson kernel 
\[
P_s(x,y) = \frac{\G(\frac n2 + s)}{\pi^{\frac n2}\G(s)} \frac{y^{2s}}{(y^2 + |x|^2)^{\frac{n+2s}{2}}}
\]
for the extension operator for $(-\Delta)^s$ in \cite{CS}. We also mention that, although as we have previously mentioned in the special setting of Carnot groups in \cite{FF} the authors work with the definition \eqref{FF}, which seemingly differs from our \eqref{flheat}, interestingly in (26) of their Theorem 4.4 they obtain precisely the same Poisson kernel as in \eqref{slPK} above.
\end{rmrk}

The following basic property of the kernel $K^{(a)}_z(x,y)$ is an immediate consequence of the definition \eqref{slPK} and of Proposition \ref{P:heatPKone}.

\begin{prop}\label{P:PKone}
For every $(x,z)\in \Rnp$ one has
\[
\int_{\Rn} K^{(a)}_z(x,y) dy = 1.
\]
\end{prop}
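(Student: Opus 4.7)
The plan is to reduce the claim directly to Proposition~\ref{P:heatPKone} by exchanging the order of integration. From Definition~\ref{D:extpoisson} we have, by the very definition of $K^{(a)}_z(x,y)$ as a time integral of the parabolic Poisson kernel,
\[
\int_{\mathbb R^n} K^{(a)}_z(x,y)\,dy = \int_{\mathbb R^n} \int_0^\infty P^{(a)}_z(x,y,t)\,dt\,dy.
\]
Since the Gaussian-type factor $z^{1-a} t^{-(3-a)/2} e^{-z^2/(4t)}$ is strictly positive for $z,t>0$, and the heat kernel $p(x,y,t)$ is positive by Theorem~\ref{T:KS}, the integrand $P^{(a)}_z(x,y,t)$ is non-negative on $\mathbb R^n \times (0,\infty)$. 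Therefore Fubini--Tonelli applies without any integrability verification, and I may swap the iterated integrals:
\[
\int_{\mathbb R^n} \int_0^\infty P^{(a)}_z(x,y,t)\,dt\,dy = \int_0^\infty \int_{\mathbb R^n} P^{(a)}_z(x,y,t)\,dy\,dt.
\]
The right-hand side equals $1$ by Proposition~\ref{P:heatPKone}, which yields the claim.

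There is essentially no obstacle: the only issue to keep an eye on is justifying the interchange of integrals, which is automatic here thanks to positivity. Alternatively, for a ``hands-on'' verification one can instead use the stochastic completeness \eqref{P1} inside the $y$-integral to collapse $\int_{\mathbb R^n} p(x,y,t)\,dy = 1$, then reduce to the elementary one-dimensional computation $\int_0^\infty t^{-(3-a)/2} e^{-z^2/(4t)}\,dt = 2^{1-a} z^{a-1}\,\Gamma\!\left(\tfrac{1-a}{2}\right)$ via the change of variables $u = z^2/(4t)$; the resulting normalization constants cancel against those in \eqref{slPK} to give $1$. This second route is really just a way of proving Proposition~\ref{P:heatPKone} again from scratch, so the Fubini argument using Proposition~\ref{P:heatPKone} is the cleaner approach.
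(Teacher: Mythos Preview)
Your proof is correct and matches the paper's approach exactly: the paper states that the result is an immediate consequence of Definition~\ref{D:extpoisson} and Proposition~\ref{P:heatPKone}, which is precisely your Fubini--Tonelli argument. Your alternative ``hands-on'' route via \eqref{P1} and the Gamma-integral computation is indeed just the proof of Proposition~\ref{P:heatPKone} redone, as you observe.
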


We next prove that the kernel  $K^{(a)}_z(x,y)$ is a solution of the extension operator $\LL_a$ in \eqref{extFF} above. 

\begin{prop}\label{P:PKextsolution} 
Fix $y\in \Rn$. For every $x\not= y$ and $z>0$ one has
\[
\LL_{a,x} K^{(a)}_z(x,y) = 0.
\]
\end{prop}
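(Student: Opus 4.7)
The plan is to read Proposition \ref{P:eqheatPK} as the identity
\[
(\LL_x + \Ba)\,P^{(a)}_z(x,y,t) \;=\; \p_t P^{(a)}_z(x,y,t),
\]
and then integrate in $t$ against $(0,\infty)$. Since $\LL_{a,x} = z^a(\LL_x + \Ba)$ acts only in the $(x,z)$ variables, bringing $\LL_x+\Ba$ inside the defining integral \eqref{slPK} of $K^{(a)}_z(x,y)$ yields
\[
\LL_{a,x} K^{(a)}_z(x,y) \;=\; z^a \int_0^\infty \p_t P^{(a)}_z(x,y,t)\,dt \;=\; z^a \Bigl(\lim_{t\to \infty} P^{(a)}_z(x,y,t) - \lim_{t\to 0^+} P^{(a)}_z(x,y,t)\Bigr),
\]
reducing the theorem to (i) justifying the interchange of the differential operator with the improper integral, and (ii) checking that both boundary values vanish at $x\neq y$ and $z>0$.

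For (i), I would differentiate under the integral sign by dominated convergence on a fixed compact neighborhood of $(x,z)$ avoiding $\{x=y\}\cup\{z=0\}$. For the Bessel piece, formula \eqref{heatPKdzz2} in the proof of Proposition \ref{P:eqheatPK} already expresses $\Ba P^{(a)}_z$ as an explicit rational factor in $t$ times $P^{(a)}_z$, and the Gaussian damping $e^{-z^2/(4t)}$ with $z$ bounded below makes this integrable in $t$ near $0$, while the factor $t^{-(3-a)/2}$ together with the decay of $p(x,y,t)$ secures integrability at infinity. For the $\LL_x$ piece, differentiating \eqref{heatPK} twice moves all $x$-derivatives onto $p(x,y,t)$, and the subelliptic Gaussian estimates \eqref{gaussian1} with $k=0$, $s=1,2$ provide the uniform-in-$(x,z)$ dominating function needed to bring up to two $X_i$-derivatives inside the integral.

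For (ii), fix $x\neq y$ and $z>0$. As $t\to 0^+$, the upper Gaussian bound \eqref{gaussian1} forces
\[
P^{(a)}_z(x,y,t) \;\leq\; \frac{C\,z^{1-a}}{t^{(3-a)/2}|B(x,\sqrt t)|}\,\exp\!\Bigl(-\frac{z^2}{4t} - \frac{M\,d(x,y)^2}{t}\Bigr) \;\longrightarrow\; 0,
\]
because $d(x,y)>0$ and the Gaussian factor kills any polynomial blow-up. As $t\to\infty$, the standing hypothesis of the section (that $\LL$ coincides with $\Delta$ outside a large compact set) forces $|B(x,\sqrt t)|\gtrsim t^{n/2}$ for large $t$, so \eqref{gaussian1} yields $p(x,y,t) = O(t^{-n/2})$ and hence $P^{(a)}_z(x,y,t) = O(t^{-(3-a+n)/2})\to 0$. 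Both boundary terms therefore vanish, giving $\LL_{a,x}K^{(a)}_z(x,y)=0$.

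The only real obstacle is the dominated-convergence step in (i): one must check that, uniformly on a compact set bounded away from the diagonal and from $\{z=0\}$, the second-order derivatives of the integrand decay fast enough in $t$ at both ends. Fortunately, the explicit prefactor in \eqref{heatPK} carries the $z$ and $t$ dependence cleanly, and Theorem \ref{T:KS} handles the $x$-derivatives of $p(x,y,t)$, so no delicate cancellation is needed.
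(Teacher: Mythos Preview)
Your argument is correct and is essentially the same as the paper's: both differentiate under the integral sign (justified via the Gaussian estimates of Theorem \ref{T:KS}), invoke Proposition \ref{P:eqheatPK} to convert $(\LL_x+\Ba)P^{(a)}_z$ into $\p_t P^{(a)}_z$, and then observe that $\int_0^\infty \p_t P^{(a)}_z(x,y,t)\,dt = 0$ for $x\neq y$. The only cosmetic difference is that the paper first separates the $\LL_x$ and $\Ba$ pieces into two integrals before applying the identity to cancel them, whereas you apply the identity pointwise first; and the paper dispatches the vanishing of $\int_0^\infty \p_t P^{(a)}_z\,dt$ in one line by citing Theorem \ref{T:KS}, while you spell out the boundary behavior at $t\to 0^+$ and $t\to\infty$ explicitly.
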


\begin{proof}
Using \eqref{extFF} we find for any $z>0$ and $x\not = y$
\[
z^{-a} \LL_{a,x} K^{(a)}_z(x,y) = \LL_x K^{(a)}_z(x,y) + \Ba K^{(a)}_z(x,y).
\]
To compute the quantities in the right-hand side of the latter equation we next differentiate with respect to $x$ under the integral sign in \eqref{slPK}. Such operation can be justified using the definition \eqref{heatPK} of $P^{(a)}_z(x,y,t)$ and the Gaussian estimates in Theorem \ref{T:KS}. We obtain 
\begin{equation}\label{KaLL}
z^{-a} \LL_{a,x} K^{(a)}_z(x,y) = \int_0^\infty \LL_{x}  P^{(a)}_z(x,y,t) dt + \int_0^\infty \Ba  P^{(a)}_z(x,y,t) dt.
\end{equation}
To compute the first integral in the right-hand side of the latter equation we now use \eqref{heatPKfinal} in Proposition \ref{P:eqheatPK} which gives for every $x, y\in \Rn$, $x\not=y$, and $t>0$,
\begin{align}\label{KaLL2}
\int_0^\infty \LL_{x}  P^{(a)}_z(x,y,t) dt & = \int_0^\infty \p_t P^{(a)}_z(x,y,t) dt -  \int_0^\infty \Ba P^{(a)}_z(x,y,t) dt
\\
& = - \int_0^\infty \Ba P^{(a)}_z(x,y,t) dt,
\notag
\end{align}
since by \eqref{heatPK} and Theorem \ref{T:KS} we have for every $x\not= y$
\[
\int_0^\infty \p_t P^{(a)}_z(x,y,t) dt = 0.
\]
Substituting \eqref{KaLL2} in \eqref{KaLL} we reach the desired conclusion. 

\end{proof}

\begin{dfn}[The extension problem]\label{D:ep}
The extension problem in $\Rnp$ for the nonlocal operator $(-\LL)^s$, $0<s<1$, is the following:
\begin{equation}\label{ep}
\begin{cases}
\LL_a U = 0,
\\
U(x,0) = u(x).
\end{cases}
\end{equation}
\end{dfn}

We next show how to solve \eqref{ep}. 
Given $u\in \mathscr S(\Rn)$ we define
\begin{equation}\label{pos}
U(x,z) = \int_{\R^{n}} K^{(a)}_{z}(x,y) u(y) dy.
\end{equation}

\begin{prop}\label{P:UsolvesextL2}
The function $U$ defined by \eqref{pos} solves the extension problem \eqref{ep}, in the sense that $\LL_a U = 0$ in $\Rnp$, and we have in $L^2(\Rn)$
\begin{equation}\label{UL2}
\underset{z\to 0^+}{\lim} U(\cdot;z) = u.
\end{equation}
\end{prop}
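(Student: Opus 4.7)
The plan is to treat the two assertions separately, exploiting Proposition \ref{P:PKextsolution} for the interior equation and a subordination-type rewriting of $U$ for the boundary behavior.

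For the equation $\LL_a U = 0$ in $\Rnp$, I would differentiate under the integral sign in \eqref{pos}. The legitimacy of interchanging $\LL_{a,x}$ with the $y$-integration is the only routine point here: since $u\in\mathscr S(\Rn)$ and the derivatives of $K^{(a)}_z(x,y)$ of any order can be bounded using the definition \eqref{heatPK} together with the Gaussian estimates \eqref{gaussian1}--\eqref{gaussian2} of Theorem \ref{T:KS} and the change-of-variable/exponential integration argument in the proof of Lemma \ref{L:exp}, the dominated convergence theorem supplies the needed uniform majorants locally in $(x,z)$ with $z>0$. Once the differentiation is justified, Proposition \ref{P:PKextsolution} yields $\LL_{a,x} K^{(a)}_z(x,y)=0$ for $x\neq y$, and hence $\LL_a U(x,z)=0$ throughout $\Rnp$.

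For the $L^2$ trace, the natural starting point is Fubini applied to the definition \eqref{slPK} of $K^{(a)}_z$, giving
\[
U(x,z) = \frac{1}{2^{1-a}\G(\frac{1-a}{2})} \int_0^\infty \frac{z^{1-a}}{t^{\frac{3-a}{2}}}\, e^{-\frac{z^2}{4t}}\, P_t u(x)\, dt.
\]
Performing the change of variable $\tau = z^2/(4t)$, a short computation collapses the powers of $z$ and produces the clean subordination-type formula
\[
U(x,z) = \frac{1}{\G(\frac{1-a}{2})} \int_0^\infty \tau^{\frac{1-a}{2}-1}\, e^{-\tau}\, P_{z^2/(4\tau)} u(x)\, d\tau.
\]
Since $\int_0^\infty \tau^{(1-a)/2-1} e^{-\tau} d\tau = \G(\tfrac{1-a}{2})$, this representation in particular reproves Proposition \ref{P:PKone} and expresses the deviation from $u$ as
\[
U(\cdot,z) - u = \frac{1}{\G(\frac{1-a}{2})} \int_0^\infty \tau^{\frac{1-a}{2}-1}\, e^{-\tau}\, \bigl(P_{z^2/(4\tau)} u - u\bigr)\, d\tau.
\]

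Taking $L^2(\Rn)$-norms inside the $\tau$-integral (by Minkowski's integral inequality) and invoking the contraction property (ii) of $\{P_t\}_{t\ge 0}$ for a uniform bound $\|P_{z^2/(4\tau)} u - u\|_{L^2(\Rn)} \le 2\|u\|_{L^2(\Rn)}$, together with the strong continuity (iii) which gives pointwise-in-$\tau$ convergence $\|P_{z^2/(4\tau)} u - u\|_{L^2(\Rn)} \to 0$ as $z\to 0^+$ for each fixed $\tau>0$, the weight $\tau^{(1-a)/2-1} e^{-\tau}$ is integrable on $(0,\infty)$ for $-1<a<1$ and hence serves as a dominating function. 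Lebesgue's dominated convergence theorem then yields \eqref{UL2}. The main obstacle, such as it is, lies not in the convergence itself but in making the change of variables reducing \eqref{pos} to a Bochner-type subordination of the heat semigroup; after that rewriting the $L^2$-trace follows from contractivity and strong continuity of $\{P_t\}$ alone, without any further use of the fine structure of $p(x,y,t)$.
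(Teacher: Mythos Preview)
Your argument is correct. For the interior equation $\LL_a U = 0$ you do exactly what the paper does: differentiate under the integral and apply Proposition \ref{P:PKextsolution}, with the Gaussian bounds of Theorem \ref{T:KS} providing the majorants.

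For the boundary trace \eqref{UL2}, however, your route is genuinely different from the paper's and in fact more economical. The paper keeps the $t$-variable, writes
\[
\|U(\cdot,z)-u\|_{L^2} \le \frac{z^{1-a}}{2^{1-a}\G(\frac{1-a}{2})}\int_0^\infty \frac{e^{-z^2/4t}}{t^{(3-a)/2}}\,\|P_t u - u\|_{L^2}\,dt,
\]
and then splits $\int_0^1+\int_1^\infty$. On $[1,\infty)$ the bound $\|P_t u - u\|\le 2\|u\|$ suffices, but on $[0,1]$ the weight $t^{-(3-a)/2}$ is non-integrable and the paper has to invoke the generator estimate \eqref{ig2}, i.e.\ $\|P_t u - u\| = o(t^b)$ for some $b\in(0,\tfrac{1-a}{2})$, followed by the change of variable $\sigma = z^2/(4t)$ on that piece. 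You instead perform the substitution $\tau = z^2/(4t)$ globally at the outset, obtaining the clean subordination form
\[
U(\cdot,z)-u = \frac{1}{\G(\frac{1-a}{2})}\int_0^\infty \tau^{\frac{1-a}{2}-1}e^{-\tau}\bigl(P_{z^2/(4\tau)}u - u\bigr)\,d\tau,
\]
after which the Gamma-density $\tau^{(1-a)/2-1}e^{-\tau}$ is an integrable majorant on its own, and only strong continuity (iii) and contractivity (ii) of $\{P_t\}$ are needed. Your proof therefore uses strictly less: it does not require that $u$ lie in the domain of $\LL$ (the estimate \eqref{ig2} is never called upon), so your argument actually establishes \eqref{UL2} for every $u\in L^2(\Rn)$, not just for $u\in\mathscr S(\Rn)$.
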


\begin{proof}
Differentiating under the integral sign and using Proposition \ref{P:PKextsolution} it is clear that $U$ solves the equation $\LL_a U = 0$ in $\Rnp$. To prove \eqref{UL2} we argue as follows. In view of Proposition \ref{P:PKone} we have for every $x\in \Rn$
\[
U(x,z) - u(x) = \int_{\R^{n}} K^{(a)}_{z}(x,y) [u(y) - u(x)] dy = \int_0^\infty \int_{\R^{n}} P^{(a)}_{z}(x,y,t) [u(y) - u(x)] dy dt,
\]
where in the second equality we have used \eqref{slPK}. By the definition of $P^{(a)}_{z}(x,y,t)$ we further obtain
\begin{align}\label{Uminusu}
U(x,z) - u(x) & = \frac{1}{2^{1-a} \G(\frac{1-a}{2})} z^{1-a} \int_0^\infty \frac{e^{-\frac{z^2}{4t}} }{t^{\frac{3-a}{2}}}  \int_{\R^{n}} p(x,y,t) [u(y) - u(x)] dy dt,
\\
& = \frac{1}{2^{1-a} \G(\frac{1-a}{2})} z^{1-a} \int_0^\infty \frac{e^{-\frac{z^2}{4t}} }{t^{\frac{3-a}{2}}}  [P_t u(x) - u(x)] dt.
\notag
\end{align}
In what follows, in order to simplify the notation we indicate with $||\ ||$ the norm of a function in $L^2(\Rn)$. Formula \eqref{Uminusu} gives
\begin{align*}
||U(\cdot;z) - u|| & \le \frac{1}{2^{1-a} \G(\frac{1-a}{2})} z^{1-a} \int_0^\infty \frac{e^{-\frac{z^2}{4t}} }{t^{\frac{3-a}{2}}}  ||P_t u - u|| dt.
\end{align*}
We infer that \eqref{UL2} will be proved if we show that the right-hand side in the latter inequality tends to $0$ as $z\to 0^+$. With this objective in mind we write
\[
z^{1-a}  \int_0^\infty \frac{e^{-\frac{z^2}{4t}} }{t^{\frac{3-a}{2}}}  ||P_t u - u|| dt = z^{1-a}  \int_0^1 \frac{e^{-\frac{z^2}{4t}} }{t^{\frac{3-a}{2}}}  ||P_t u - u|| dt + z^{1-a}  \int_1^\infty \frac{e^{-\frac{z^2}{4t}} }{t^{\frac{3-a}{2}}}  ||P_t u - u|| dt.
\]
Since $||P_t u - u||\le ||P_t u|| + ||u||\le 2 ||u||$, and $\frac{3-a}{2}>1$, it is clear that 
\[
z^{1-a}  \int_1^\infty \frac{e^{-\frac{z^2}{4t}} }{t^{\frac{3-a}{2}}}  ||P_t u - u|| dt \le 2||u|| \int_1^\infty \frac{dt}{t^{\frac{3-a}{2}}} \le C(u,a) z^{1-a}\ \longrightarrow\ 0
\]
as $z\to 0^+$ since $1-a>0$. 
Next, we choose $0<b<1$ such that $0<b<\frac{1-a}2$. Using \eqref{ig2} we can write
\begin{equation}\label{near0}
z^{1-a}  \int_0^1 \frac{e^{-\frac{z^2}{4t}} }{t^{\frac{3-a}{2}}}  ||P_t u - u|| dt \le C z^{1-a}  \int_0^1 \frac{e^{-\frac{z^2}{4t}} }{t^{\frac{3-a}{2}-1-b}} \frac{dt}t.
\end{equation}
We now make the change of variable $\sigma =   \frac{z^2}{4t}$, for which $\frac{d\sigma}\sigma = - \frac{dt}t$, obtaining 
\begin{align*}
& z^{1-a}  \int_0^1 \frac{e^{-\frac{z^2}{4t}} }{t^{\frac{3-a}{2}-1-b}} \frac{dt}t = z^{1-a}  \int_{\frac{z^2}4}^\infty \left(\frac{z^2}{4\sigma}\right)^{-\frac{3-a}{2} +1 + b} e^{-\sigma}  \frac{d\sigma}\sigma
\\
& = C(a,b) z^{2b} \int_{\frac{z^2}4}^\infty \left(\frac{1}{\sigma}\right)^{-\frac{3-a}{2} +2 + b} e^{-\sigma}  d\sigma \le C(a,b) z^{2b} \int_{0}^\infty \left(\frac{1}{\sigma}\right)^{-\frac{3-a}{2} +2 + b} e^{-\sigma}  d\sigma\ \longrightarrow\ 0,
\end{align*}
as $z\to 0^+$, since the integral in the right-hand side converges if $-\frac{3-a}{2} +2 + b<1$, or equivalently $b< \frac{1-a}2$, which is true by our choice of $b$.

\end{proof}

Our next result shows that the Dirichlet datum $u$ is not just attained in $L^2(\Rn)$, but in the classical pointwise sense. 

\begin{prop}\label{P:Usolvesext}
The function $U$ defined by \eqref{pos} solves the extension problem \eqref{ep} in the sense that for every $x_0\in \Rn$ one has
\begin{equation}\label{Ulimitupoint}
\underset{(x,z)\to (x_0,0)}{\lim} U(x,z) = u(x_0).
\end{equation}
\end{prop}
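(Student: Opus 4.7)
My plan is to reduce \eqref{Ulimitupoint} to the study of the semigroup convergence $P_t u \to u$, parallel to the proof of Proposition \ref{P:UsolvesextL2} but now obtaining a pointwise, rather than $L^2$, statement. Using Proposition \ref{P:PKone} to subtract a ``constant'' and the definition \eqref{slPK} of $K^{(a)}_z$, I first write
\[
U(x,z) - u(x_0) = \int_{\Rn} K^{(a)}_z(x,y)[u(y)-u(x_0)]\,dy = \frac{z^{1-a}}{2^{1-a}\G(\frac{1-a}{2})} \int_0^\infty \frac{e^{-\frac{z^2}{4t}}}{t^{\frac{3-a}{2}}}\bigl[P_t u(x) - u(x_0)\bigr]\,dt,
\]
exactly as in \eqref{Uminusu}, since $\int_{\Rn} p(x,y,t)[u(y)-u(x_0)]\,dy = P_t u(x) - u(x_0)$ by \eqref{P1}. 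The task becomes to show that this last expression tends to $0$ as $(x,z)\to(x_0,0)$.

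The key observation is that for $u\in \mathscr S(\Rn)$ one has the quantitative bound
\[
\lvert P_t u(x) - u(x)\rvert \le t\,\lVert \mathscr L u\rVert_{L^\infty(\Rn)},
\]
which follows from $P_t u - u = \int_0^t P_\sigma \mathscr L u\,d\sigma$ together with the sub-Markovian property $\lVert P_\sigma\rVert_{L^\infty\to L^\infty}\le 1$. Writing $P_t u(x) - u(x_0) = [P_t u(x) - u(x)] + [u(x) - u(x_0)]$, I will split the time integral at a threshold $\eta>0$:
\[
\int_0^\infty = \int_0^\eta + \int_\eta^\infty.
\]
For the tail $t>\eta$ I use $\lvert P_t u(x) - u(x_0)\rvert \le 2\lVert u\rVert_{L^\infty}$, which gives a bound by $C(u,a,\eta)\,z^{1-a} \to 0$ since $(3-a)/2 > 1$ makes the integral $\int_\eta^\infty t^{-(3-a)/2}\,dt$ finite.

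For the near piece $t\in (0,\eta)$, the contribution of $\lvert u(x) - u(x_0)\rvert$ is bounded by
\[
\lvert u(x) - u(x_0)\rvert \cdot \frac{z^{1-a}}{2^{1-a}\G(\frac{1-a}{2})}\int_0^\infty \frac{e^{-\frac{z^2}{4t}}}{t^{\frac{3-a}{2}}}\,dt = \lvert u(x) - u(x_0)\rvert,
\]
using the computation from the proof of Proposition \ref{P:heatPKone}, and this vanishes by continuity of $u$ as $x\to x_0$. The contribution of $\lvert P_t u(x) - u(x)\rvert \le t\,\lVert \mathscr L u\rVert_\infty$ is bounded by
\[
\lVert \mathscr L u\rVert_\infty\, z^{1-a}\int_0^\eta \frac{e^{-\frac{z^2}{4t}}}{t^{\frac{1-a}{2}}}\,dt \le \lVert \mathscr L u\rVert_\infty\, z^{1-a}\int_0^\eta t^{-\frac{1-a}{2}}\,dt = C(a,\eta)\lVert \mathscr L u\rVert_\infty\, z^{1-a}\to 0,
\]
since $(1-a)/2<1$. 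Combining these three estimates yields the claim.

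The main obstacle is simply the bookkeeping in splitting off the two sources of smallness (time-smallness giving $|P_t u(x) - u(x)| = O(t)$, and spatial continuity giving $|u(x)-u(x_0)|\to 0$) in a way that is uniform over the joint limit $(x,z)\to (x_0,0)$; once the identity via the semigroup is recognized, no further Gaussian estimates from Theorem \ref{T:KS} are needed, because the cancellation against $1$ has already been carried out through Proposition \ref{P:PKone}.
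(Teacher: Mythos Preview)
Your argument is correct and takes a genuinely different route from the paper. The paper splits the \emph{spatial} integral in $y$ into the regions $d(y,x_0)<\delta$ and $d(y,x_0)\ge\delta$: the near piece is small by continuity of $u$, while the far piece is controlled using the Gaussian upper bound \eqref{gaussian1} from Theorem~\ref{T:KS} together with Lemma~\ref{L:exp}, followed by a change of variable $\sigma = 4t/z^2$ and a further splitting in $\sigma$. Your approach instead passes through the semigroup representation and splits in \emph{time}, exploiting the pointwise bound $|P_t u(x)-u(x)|\le t\,\|\mathscr L u\|_{L^\infty}$, which follows from $P_tu-u=\int_0^t P_\sigma\mathscr L u\,d\sigma$ and $P_\sigma 1=1$. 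This avoids the Gaussian estimates entirely; only stochastic completeness and the $L^\infty$-contractivity of $P_t$ are used. The trade-off is that the paper's argument works for any bounded uniformly continuous $u$, whereas yours needs $\mathscr L u\in L^\infty$, but since the statement assumes $u\in\mathscr S(\Rn)$ this is available. Two small remarks: your threshold $\eta$ can in fact be any fixed positive number (say $\eta=1$), since all three pieces tend to zero independently; and note that your pointwise $O(t)$ bound is strictly sharper than the $O(\sqrt t)$ of Proposition~\ref{P:sqrtdecay}, obtained there by a gradient estimate---so your observation would in fact also settle Corollary~\ref{C:trace} in the full range $0<s<1$ without further work.
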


\begin{proof}
To see that $U(x,z)$ satisfies \eqref{Ulimitupoint} we plan to show that for every $\ve>0$ there exists $\delta = \delta(x_0,\ve)>0$ such that
\begin{equation}\label{lim2}
d(x,x_0) < \delta,\ \ \ \ 0<z<\delta^2\ \Longrightarrow\ \left|U(x,z) - u(x_0)\right| < \ve.
\end{equation}
Now, given $x_0\in \Rn$ and $\ve>0$, we choose $\delta = \delta(x_0,\ve)>0$ such that $d(y,x_0)<\delta \Longrightarrow \left|u(y) - u(x_0)\right| < \frac{\ve}2$ (in fact, one should notice that $\delta$ can be taken independent of $x_0$). In view of Proposition \ref{P:heatPKone} this gives
\[
\int_0^\infty \int_{d(y,x_0)<\delta} P^{(a)}_{z}(x,y,t) \left|u(y) - u(x_0)\right| dy dt < \frac{\ve}2 \int_0^\infty \int_{\Rn} P^{(a)}_{z}(x,y,t) dy dt = \frac{\ve}2.
\]
Applying Proposition \ref{P:PKone}, we have 
\begin{align*}
\left|U(x,z) - u(x_0)\right| & \le \int_{\Rn} K^{(a)}_{z}(x,y) \left|u(y) - u(x_0)\right| dy
\\
& = \int_0^\infty \int_{\Rn} P^{(a)}_{z}(x,y,t) \left|u(y) - u(x_0)\right| dy dt
\\
& = \int_0^\infty \int_{d(y,x_0)<\delta} P^{(a)}_{z}(x,y,t) \left|u(y) - u(x_0)\right| dy dt
\\
& + \int_0^\infty \int_{d(y,x_0)\ge \delta} P^{(a)}_{z}(x,y,t) \left|u(y) - u(x_0)\right| dy dt
\\
& < \frac{\ve}2 + \int_0^\infty \int_{d(y,x_0)\ge \delta} P^{(a)}_{z}(x,y,t) \left|u(y) - u(x_0)\right| dy dt.
\end{align*}

On the other hand, we trivially have
\begin{align*}
& \int_0^\infty \int_{d(y,x_0) \ge \delta} P^{(a)}_{z}(x,y,t) \left|u(y) - u(x_0)\right| dy dt \le 2 ||u||_{L^\infty(\Rn)} \int_0^\infty \int_{d(y,x_0) \ge \delta} P^{(a)}_{z}(x,y,t) dy dt
\\
& = 2 ||u||_{L^\infty(\Rn)} \frac{z^{1-a}}{2^{1-a} \G(\frac{1-a}{2})} \int_0^\infty \frac{1}{t^{\frac{3-a}{2}}} e^{-\frac{z^2}{4t}} \int_{d(y,x_0) \ge \delta} p(x,y,t) dy dt.
\end{align*}
Now suppose that $d(x,x_0)<\frac{\delta}2$. Then, on the set where $d(y,x_0) \ge \delta$ we have
\[
d(y,x_0) \le d(y,x) + d(x,x_0) < d(y,x) + \frac{\delta}2 \le d(y,x) + \frac{d(y,x_0)}2.
\]
Therefore, on such set we have $\frac{d(y,x_0)}2 < d(y,x)$. This implies that, when $d(x,x_0)<\frac{\delta}2$, then
\[
\{y\in \Rn\mid d(y,x_0)\ge \delta\}\ \subset\ \{y\in \Rn\mid d(y,x) \ge \frac{\delta}2\}.
\]
Using now the upper Gaussian estimate in \eqref{gaussian1}, we have on the set $\{y\in \Rn\mid d(y,x) \ge \frac{\delta}2\}$,
\begin{align*}
p(x,y,t) & \le \frac{C}{|B(x,\sqrt{t})|} \exp \left(- \frac{M d(x,y)^2}{t}\right) 
\\
& = \frac{C}{|B(x,\sqrt{t})|} \exp \left(- \frac{M d(y,x)^2}{2t}\right) \exp \left(- \frac{M d(y,x)^2}{2t}\right)  
\\
& \le \exp \left(- \frac{M \delta^2}{8t}\right) \frac{C}{|B(x,\sqrt{t})|} \exp \left(- \frac{M d(y,x)^2}{2t}\right)
\end{align*}
This gives
\begin{align*}
& \int_{d(y,x_0) \ge \delta} p(x,y,t) dy  \le \int_{d(y,x) \ge \frac{\delta}2} p(x,y,t) dy 
\\
& \le \frac{C}{|B(x,\sqrt{t})|} \exp \left(- \frac{M \delta^2}{8t}\right) \int_{d(y,x) \ge \frac{\delta}2} \exp \left(- \frac{M d(y,x)^2}{2t}\right) dy
\\
& \le \frac{C}{|B(x,\sqrt{t})|} \exp \left(- \frac{M \delta^2}{8t}\right) \int_{\Rn} \exp \left(- \frac{M d(y,x)^2}{2t}\right) dy. 
\end{align*}
We now have from \eqref{exp}, for some constant $C^\star = C^\star(C_d,M)>0$,
\begin{equation*}
\int_{\Rn} \exp\left(-\frac{M d(y,x)^2}{2t}\right) dy \le C^\star |B(x,\sqrt t)|.
\end{equation*}
We conclude for some $\overline C>0$
\[
\int_{d(y,x_0) \ge \delta} p(x,y,t) dy  \le \overline C  \exp \left(- \frac{M \delta^2}{8t}\right).
\]
We thus find
\[
\int_0^\infty \frac{z^{1-a}}{t^{\frac{3-a}{2}}} e^{-\frac{z^2}{4t}} \int_{d(y,x_0) \ge \delta} p(x,y,t) dy dt \le \overline C \int_0^\infty \frac{z^{1-a}}{t^{\frac{3-a}{2}}} e^{-\frac{z^2}{4t}} \exp \left(- \frac{M \delta^2}{8t}\right) dt.
\]
To estimate the integral in the right-hand side of the latter inequality we make the change of variable $\sigma = \frac{4t}{z^2}$, obtaining
\begin{align*}
&  \int_0^\infty \frac{z^{1-a}}{t^{\frac{3-a}{2}}} e^{-\frac{z^2}{4t}} \exp \left(- \frac{M \delta^2}{8t}\right) dt = 2^{1-a} \int_0^\infty \frac{1}{\sigma^{\frac{3-a}{2}}} e^{-\frac{1}{\sigma}} \exp \left(- \frac{M \delta^2}{2z^2 \sigma}\right) d\sigma 
\\
& = 2^{1-a} \int_0^{\frac{1}{\delta}} \frac{1}{\sigma^{\frac{3-a}{2}}} e^{-\frac{1}{\sigma}} \exp \left(- \frac{M \delta^2}{2z^2 \sigma}\right) d\sigma + 2^{1-a} \int_{\frac{1}{\delta}}^\infty \frac{1}{\sigma^{\frac{3-a}{2}}} e^{-\frac{1}{\sigma}} \exp \left(- \frac{M \delta^2}{2z^2 \sigma}\right) d\sigma
\\
& = I_z(\delta) + II_z(\delta). 
\end{align*}
Suppose now that $0<z<\delta^2$. On the set where $0<\sigma<\frac{1}{\delta}$ we have $\frac{M \delta^2}{2z^2 \sigma} > \frac{M}{2\delta}$, and thus
\begin{align*}
I_z(\delta) & \le 2^{1-a} \int_0^{\frac{1}{\delta}} \frac{1}{\sigma^{\frac{3-a}{2}}} e^{-\frac{1}{\sigma}} \exp \left(- \frac{M \delta^2}{2z^2 \sigma}\right) d\sigma
\\
& \le \exp \left(- \frac{M}{2\delta}\right) 2^{1-a} \int_0^{\frac{1}{\delta}} \frac{1}{\sigma^{\frac{3-a}{2}}} e^{-\frac{1}{\sigma}}  d\sigma \le \exp \left(- \frac{M}{2\delta}\right) 2^{1-a} \int_0^{\infty} \frac{1}{\sigma^{\frac{1-a}{2}}} e^{-\frac{1}{\sigma}}  \frac{d\sigma}{\sigma}
\\
& = \exp \left(- \frac{M}{2\delta}\right) 2^{1-a} \int_0^{\infty} w^{\frac{1-a}{2}} e^{-w}  \frac{dw}{w} = 2^{1-a} \Gamma\left(\frac{1-a}{2}\right) \exp \left(- \frac{M}{2\delta}\right)\ \longrightarrow\ 0,
\end{align*}
as $\delta\to 0^+$. On the other hand, on the set where $\frac{1}{\delta}<\sigma<\infty$ we simply estimate $\exp \left(- \frac{M \delta^2}{2z^2 \sigma}\right) \le 1$, obtaining 
\begin{align*}
II_z(\delta) & \le 2^{1-a} \int_{\frac{1}{\delta}}^\infty \frac{1}{\sigma^{\frac{3-a}{2}}} e^{-\frac{1}{\sigma}} d\sigma
 = 2^{1-a} \int_{\frac{1}{\delta}}^\infty  \frac{1}{\sigma^{\frac{1-a}{2}}} e^{-\frac{1}{\sigma}}  \frac{d\sigma}{\sigma}
 \\
 & = 2^{1-a} \int_{\frac{1}{\delta}}^\infty  \frac{1}{\sigma^{\frac{1-a}{2}}} e^{-\frac{1}{\sigma}}  \frac{d\sigma}{\sigma} = 2^{1-a} \int_{0}^\delta  w^{\frac{1-a}{2}} e^{-w}  \frac{dw}{w} \ \longrightarrow\ 0,
\end{align*}
as $\delta\to 0^+$.
Therefore, given $\ve>0$ it suffices to further restrict $\delta>0$ in order to achieve \eqref{lim2}, for $d(x,x_0)<\delta$ and $0<z<\delta^2$.

\end{proof}

We finally prove that the fractional powers $(-\LL)^s$ introduced in \eqref{flheat} of Definition \ref{D:flheat} above are obtained as the Dirichlet-to-Neumann map of the extension problem \eqref{ep} above.

\begin{prop}\label{P:trace} 
Let $0<s<1$ and $a = 1-2s$. Given a function $u\in \mathscr S(\Rn)$, with $(-\LL)^s u(x)$ defined as in \eqref{flheat} above, one has in $L^2(\Rn)$
\begin{equation}\label{trace}
- \frac{2^{-a} \Gamma\left(\frac{1-a}2\right)}{\Gamma\left(\frac{1+a}2\right)}  \underset{z\to 0^+}{\lim} \paa U(x,z) = (-\LL)^s u(x).
\end{equation}
\end{prop}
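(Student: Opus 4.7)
Set $a = 1-2s$, so $\frac{1-a}{2} = s$ and $\frac{1+a}{2} = 1-s$. Using the representation \eqref{pos} together with \eqref{slPK} and Proposition \ref{P:PKone}, I would first rewrite
\[
U(x,z) - u(x) = \frac{z^{2s}}{2^{2s}\Gamma(s)} \int_0^\infty \frac{e^{-z^2/(4t)}}{t^{1+s}}\bigl[P_t u(x) - u(x)\bigr]\,dt,
\]
the key point being that the $y$-integration has already been absorbed into $P_t u(x)$, so the $z$-dependence is explicit. Differentiating in $z$ produces two contributions: one from $\partial_z(z^{2s})$ and one from $\partial_z(e^{-z^2/(4t)})$. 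After multiplying by $z^a = z^{1-2s}$, we obtain
\[
z^a \partial_z U(x,z) = \frac{2s}{2^{2s}\Gamma(s)}\int_0^\infty \frac{e^{-z^2/(4t)}}{t^{1+s}}[P_t u - u]\,dt \;-\; \frac{z^2}{2^{2s+1}\Gamma(s)}\int_0^\infty \frac{e^{-z^2/(4t)}}{t^{2+s}}[P_t u - u]\,dt.
\]

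The main work is to pass to the limit $z\to 0^+$ in these two terms inside $L^2(\mathbb R^n)$. For the first integral I would apply dominated convergence in the Bochner sense: the integrand is dominated by $t^{-1-s}\|P_t u - u\|$ which, by \eqref{ig2} with $b$ chosen in $(s,1)$ (giving integrability near $0$) and by the contraction estimate $\|P_t u - u\| \le 2\|u\|$ (giving integrability at $\infty$), is an $L^1((0,\infty);dt)$-majorant. The limit is thus $\int_0^\infty t^{-1-s}[P_t u - u]\,dt$, which by Definition~\ref{D:flheat} equals $-\frac{\Gamma(1-s)}{s}(-\mathscr L)^s u$.

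The delicate step, and the main obstacle, is showing that the second term vanishes in the limit. My plan is to make the change of variable $\sigma = z^2/(4t)$, which converts it to
\[
-\frac{z^{-2s}\cdot 4^{1+s}}{2^{2s+1}\Gamma(s)}\int_0^\infty \sigma^s e^{-\sigma}\bigl[P_{z^2/(4\sigma)} u - u\bigr] d\sigma,
\]
then split the $\sigma$-integral at $\sigma = z^2/4$. On the inner region $\sigma < z^2/4$, one has $z^2/(4\sigma) > 1$ and the trivial bound $\|P_\tau u - u\| \le 2\|u\|$ gives a contribution of order $z^{-2s}\cdot z^{2(s+1)} = z^2 \to 0$. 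On the outer region $\sigma \ge z^2/4$, one has $z^2/(4\sigma) \le 1$ and \eqref{ig2} applied with some $b \in (s,1)$ yields $\|P_{z^2/(4\sigma)} u - u\| \le C(z^2/(4\sigma))^b$; integrating against $\sigma^s e^{-\sigma}$ is finite since $s-b > -1$, and the resulting contribution is of order $z^{-2s}\cdot z^{2b} = z^{2(b-s)} \to 0$. This justifies that the second term disappears.

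Combining the two limits and simplifying the constants yields
\[
\lim_{z\to 0^+} z^a \partial_z U = -\frac{2^{1-2s}\Gamma(1-s)}{\Gamma(s)}(-\mathscr L)^s u,
\]
and multiplying by $-\frac{2^{-a}\Gamma(\frac{1-a}{2})}{\Gamma(\frac{1+a}{2})} = -\frac{2^{2s-1}\Gamma(s)}{\Gamma(1-s)}$ the constants telescope to $1$, giving \eqref{trace}. The whole argument takes place in $L^2(\mathbb R^n)$-norm, where the Bochner integral defining $(-\mathscr L)^s u$ is, by hypothesis, convergent for $u\in\mathscr S(\mathbb R^n)\subset\mathscr D(\mathscr L)$.
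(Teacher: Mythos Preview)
Your proposal is correct and follows essentially the same route as the paper. Both arguments start from the identity
\[
U(x,z)-u(x)=\frac{z^{1-a}}{2^{1-a}\Gamma(\frac{1-a}{2})}\int_0^\infty t^{-\frac{3-a}{2}}e^{-z^2/(4t)}[P_t u-u]\,dt,
\]
differentiate in $z$ to obtain exactly the same two-term decomposition, treat the first term by dominated convergence using \eqref{ig2} with $b\in(s,1)$ near $t=0$ and the contraction bound near $t=\infty$, and handle the second term via the substitution $\sigma=z^2/(4t)$ together with the splitting at $t=1$ (equivalently $\sigma=z^2/4$); your order of operations (substitute, then split) merely reverses the paper's, and the resulting estimates are identical.
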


\begin{proof}
We begin by noting that proving \eqref{trace} is equivalent to establishing the following in $L^2(\Rn)$:
\begin{equation*}
- \frac{2^{-a} \Gamma\left(\frac{1-a}2\right)}{\Gamma\left(\frac{1+a}2\right)}  \underset{z\to 0^+}{\lim} \paa U(\cdot,z) = - \frac{1-a}{2\G\left(\frac{1+a}2\right)} \int_0^\infty t^{-\frac{3-a}2} \left[P_t u - u\right] dt.
\end{equation*}
In view of our hypothesis \eqref{P1}, this is in turn equivalent to the equation
\begin{equation}\label{trace2}
2^{1-a} \Gamma\left(\frac{1-a}2\right)  \underset{z\to 0^+}{\lim} \paa U(\cdot,z) =  (1-a) \int_0^\infty \int_{\Rn} t^{-\frac{3-a}2}  p(\cdot,y,t) [u(y) - u(\cdot)] dy dt.
\end{equation}
We are thus left with verifying \eqref{trace2} in $L^2(\Rn)$. In order to achieve this we observe that \eqref{pos} above and Proposition \ref{P:PKone} allow us to write
\[
U(x,z) = \int_{\R^{n}} K^{(a)}_{z}(x,y) [u(y) - u(x)] dy + u(x).
\]
Therefore, if we differentiate under the integral sign in this latter equation we find
\begin{align*}
& 2^{1-a} \Gamma\left(\frac{1-a}2\right) \paa U(x,z) = 2^{1-a} \Gamma\left(\frac{1-a}2\right) \int_{\R^{n}} z^a \p_z K^{(a)}_{z}(x,y) [u(y) - u(x)] dy
\\
& = 2^{1-a} \Gamma\left(\frac{1-a}2\right) \int_0^\infty \int_{\R^{n}} z^a \p_z P^{(a)}_{z}(x,y) [u(y) - u(x)] dy dt,
\end{align*}
where in the last equality we have applied the definition \eqref{slPK}
of the Poisson kernel $K^{(a)}_z(x,y)$. 
We now apply the equation \eqref{eqheatPK} above, that gives
\[
z^a \p_z P^{(a)}_z(x,y,t) = (1-a) z^{a-1} P^{(a)}_z(x,y,t) -\frac{z^{a+1}}{2t} P^{(a)}_z(x,y,t).
\]
Substituting the latter expression in the above equation, and using \eqref{heatPK}, we thus find
\begin{align*}
& 2^{1-a} \Gamma\left(\frac{1-a}2\right) \paa U(x,z) 
\\
& = (1-a)  \int_0^\infty \int_{\R^{n}} t^{-\frac{3-a}{2}} e^{-\frac{z^2}{4t}}  p(x,y,t) [u(y) - u(x)] dy dt
\\
& - \frac{z^2}{2} \int_0^\infty \int_{\R^{n}} t^{-\frac{3-a}{2} -1} e^{-\frac{z^2}{4t}}  p(x,y,t) [u(y) - u(x)] dy dt.
\end{align*}
The proof of \eqref{lim2} will be completed if we can show that  in $L^2(\Rn)$
\begin{equation}\label{lim3}
\underset{z\to 0^+}{\lim} \int_0^\infty t^{-\frac{3-a}{2}} e^{-\frac{z^2}{4t}}  \left[P_t u - u\right] dt = \int_0^\infty t^{-\frac{3-a}{2}} \left[P_t u - u\right] dt,
\end{equation}
and
\begin{equation}\label{lim4}
\underset{z\to 0^+}{\lim} \frac{z^2}{2} \int_0^\infty t^{-\frac{3-a}{2} -1} e^{-\frac{z^2}{4t}} \left[P_t u - u\right]  dt = 0.
\end{equation} 
We begin with \eqref{lim3}. In what follows, in order to simplify the notation we indicate with $||\ ||$ the norm of a function in $L^2(\Rn)$. We have
\begin{align*}
& \left\|\int_0^\infty t^{-\frac{3-a}{2}} e^{-\frac{z^2}{4t}}  \left[P_t u - u\right] dt  - \int_0^\infty t^{-\frac{3-a}{2}} \left[P_t u - u\right] dt\right\|
\\
& \le \int_0^\infty t^{-\frac{3-a}{2}} \left(e^{-\frac{z^2}{4t}} - 1\right) \left\|P_t u - u\right\| dt.
\end{align*}
Let now $z_k\searrow 0^+$ and consider the sequence of functions on $(0,\infty)$
\[
g_k(t) \overset{def}{=} t^{-\frac{3-a}{2}} \left(e^{-\frac{z_k^2}{4t}} - 1\right) \left\|P_t u - u\right\|.
\]
We clearly have $0\le g_k(t) \to 0$ as $k\to \infty$, for every $t\in (0,\infty)$. Furthermore, since 
\[
\left\|P_t u - u\right\|\le ||P_t u|| + ||u|| \le 2 ||u||,
\]
and since $\frac{3-a}{2}>1$, we have for every $k\in \mathbb N$,
\[
g_k(t) \le 2 ||u|| t^{-\frac{3-a}{2}} \in L^1(1,\infty).
\]
Since $0<\frac{1-a}{2}<1$, we now choose $b\in (\frac{1-a}{2},1)$. By \eqref{ig2} we have $||P_t u - u|| = O(t^b)$ on $(0,1)$. We thus infer that there exists a constant $C>0$, independent of $k$, such that
\[
g_k(t) \le C t^{-\frac{3-a}{2} + b} \in L^1(0,1).
\] 
Therefore, the functions $g_k$ have a common dominant in $L^1(0,\infty)$. By Lebesgue dominated convergence theorem we conclude that \eqref{lim3} does hold.

Finally, to prove \eqref{lim4} we argue in a similar way. 
We have
\begin{align}\label{lim5}
& \left\|z^2 \int_0^\infty t^{-\frac{3-a}{2} -1} e^{-\frac{z^2}{4t}} \left[P_t u - u\right]  dt\right\|  \le z^2 \int_0^\infty t^{-\frac{3-a}{2} -1} e^{-\frac{z^2}{4t}} \left\|P_t u - u\right\|  dt
\\
& = z^2 \int_0^1 t^{-\frac{3-a}{2} -1} e^{-\frac{z^2}{4t}} \left\|P_t u - u\right\|  dt + z^2 \int_1^\infty t^{-\frac{3-a}{2} -1} e^{-\frac{z^2}{4t}} \left\|P_t u - u\right\|  dt
\notag\\
& \le  C_1 z^2  \int_0^1 t^{-\frac{3-a}{2} -1 + b} e^{-\frac{z^2}{4t}}  dt + 2 ||u|| z^2 \int_1^\infty t^{-\frac{3-a}{2} -1}  dt
\notag\\
& \le  C_1 z^2 \int_0^1 t^{-\frac{3-a}{2} -1 + b} e^{-\frac{z^2}{4t}}  dt + C_2 z^2.
\notag
\end{align}
Here, as before, $b\in (\frac{1-a}{2},1)$. Now, the change of variable $\sigma =   \frac{z^2}{4t}$, for which $\frac{d\sigma}\sigma = - \frac{dt}t$, gives
\begin{align*}
& \int_0^1 t^{-\frac{3-a}{2}  + b} e^{-\frac{z^2}{4t}}  \frac{dt}t = \int_{\frac{z^2}4}^\infty \left(\frac{z^2}{4\sigma}\right)^{-\frac{3-a}{2}  + b} e^{-\sigma}  \frac{d\sigma}\sigma \le C z^{2b -3 + a} \int_0^\infty \left(\frac{1}{\sigma}\right)^{-\frac{3-a}{2}  + b + 1} e^{-\sigma} d\sigma, 
\end{align*}
and the latter integral is finite if  $ b <  \frac{3-a}{2}$ (because of the factor $e^{-\sigma}$, there is of course no problem at infinity). But this is true, since $b < 1< 1 +\frac{1-a}{2} = \frac{3-a}{2}$. Fortunately, we still have a factor $z^2$ in front of the first integral in the right-hand side of \eqref{lim5}, and thus for such term $z$ is raised to the power
\[
 2 + 2b -3 + a = 2b + a - 1>0,
 \]
 since $b> \frac{1-a}2$! We conclude that also \eqref{lim4} does hold, thus completing the proof.

\end{proof}

The convergence in \eqref{trace} of Proposition \ref{P:trace} is in the $L^2$ sense. One may naturally wonder about pointwise convergence. With this objective in mind we next establish a useful pointwise estimate.

\begin{prop}\label{P:sqrtdecay}
Let $u\in \mathscr S(\Rn)$. Then, for every $x\in \Rn$ we have
\begin{equation}\label{Ptsqrt}
|P_t u(x) - u(x)| \le  C ||\x u||_{L^\infty(\Rn)} \sqrt t.
\end{equation}
\end{prop}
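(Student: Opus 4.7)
The plan is to estimate $P_t u(x) - u(x)$ by the heat kernel representation together with the Gaussian upper bound in Theorem \ref{T:KS} and the volume estimate in Lemma \ref{L:exp}. The core ingredient is that a smooth function with bounded horizontal gradient is globally Lipschitz with respect to the Carnot--Carathéodory distance $d$.

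First, using the stochastic completeness \eqref{P1}, I would write
\[
P_t u(x) - u(x) = \int_{\Rn} p(x,y,t)\,[u(y) - u(x)]\, dy.
\]
Next, I would recall the sub-unit curve definition of $d$: if $\gamma:[0,T]\to \Rn$ is subunitary with $\gamma(0)=x$, $\gamma(T)=y$, then
\[
u(y) - u(x) = \int_0^T \sum_{i=1}^m a_i(s)\,X_i u(\gamma(s))\, ds,
\]
so $|u(y)-u(x)| \le \|\x u\|_{L^\infty(\Rn)}\, T$ because $\sum |a_i| \le 1$. Taking the infimum over $\gamma \in \mathscr S(x,y)$ yields the $d$-Lipschitz bound $|u(y)-u(x)| \le \|\x u\|_{L^\infty(\Rn)}\, d(x,y)$. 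Substituting gives
\[
|P_t u(x) - u(x)| \le \|\x u\|_{L^\infty(\Rn)} \int_{\Rn} p(x,y,t)\, d(x,y)\, dy.
\]

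Then I would apply the upper Gaussian estimate \eqref{gaussian1} in Theorem \ref{T:KS} with $k=s=0$:
\[
\int_{\Rn} p(x,y,t)\, d(x,y)\, dy \le \frac{C}{|B(x,\sqrt{t})|} \int_{\Rn} d(x,y)\exp\!\left(-\frac{M d(x,y)^2}{t}\right) dy.
\]
Now Lemma \ref{L:exp}, applied with $\beta = 1$ and $\alpha = M$, gives
\[
\int_{\Rn} d(x,y)\exp\!\left(-\frac{M d(x,y)^2}{t}\right) dy \le C\, t^{1/2}\, |B(x,\sqrt{t})|,
\]
and the factor $|B(x,\sqrt{t})|$ cancels, leaving the desired bound $|P_t u(x)-u(x)| \le C\,\|\x u\|_{L^\infty(\Rn)}\sqrt{t}$.

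The only step that is not a pure citation is the Lipschitz estimate for $u$ with respect to $d$, but this is essentially built into the definition of the subunitary length, so there is no serious obstacle. The main thing to keep an eye on is to make sure the Gaussian decay in Theorem \ref{T:KS} is strong enough relative to the linear factor $d(x,y)$ — and that is exactly what Lemma \ref{L:exp} is designed to handle.
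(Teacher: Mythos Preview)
Your argument is correct, but it takes a genuinely different route from the paper's. The paper writes
\[
P_t u(x) - u(x) = \int_0^t \frac{d}{d\tau} P_\tau u(x)\, d\tau = -\int_0^t \int_{\Rn} \langle \x p(x,y,\tau), \x u(y)\rangle\, dy\, d\tau,
\]
then applies the \emph{gradient} Gaussian bound in \eqref{gaussian1} (the case $s=1$) together with Lemma \ref{L:exp} with $\beta=0$, and finishes with the time integration $\int_0^t \tau^{-1/2}\, d\tau = 2\sqrt t$. You instead use stochastic completeness to subtract $u(x)$ inside the integral, invoke the elementary Carnot--Carath\'eodory Lipschitz bound $|u(y)-u(x)|\le \|\x u\|_{L^\infty}\, d(x,y)$, and then apply only the \emph{zeroth-order} Gaussian bound on $p$ together with Lemma \ref{L:exp} with $\beta=1$. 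Your route is slightly more elementary: it avoids both the time integration and the integration by parts, and needs only the pointwise bound on $p$ rather than on $\x p$. The paper's route, on the other hand, bypasses the Lipschitz step entirely and would adapt more readily if one wanted to replace $\|\x u\|_{L^\infty}$ by a weaker quantity controlled only near the diagonal.
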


\begin{proof}
To prove  \eqref{Ptsqrt} we argue as follows. We have
\begin{align*}
P_t u(x) - u(x) & = \int_0^t \frac{d}{d\tau} P_\tau u(x) ds = \int_0^t \LL P_\tau u(x) d\tau = \int_0^t \int_{\Rn} \LL p(x,y,\tau) u(y)dy d\tau 
\\
& = - \int_0^t \int_{\Rn} < \x p(x,y,\tau),\x u(y)> dy d\tau.
\end{align*}
Using \eqref{gaussian1} in the above identity, we find
\begin{align*}
|P_t u(x) - u(x)| & \le  \int_0^t \int_{\Rn} |\x p(x,y,\tau)||\x u(y)| dy d\tau
\\
& \le   C ||\x u||_{L^\infty(\Rn)} \int_0^t  \frac{1}{\sqrt{\tau} |B(x,\sqrt{\tau})|} \left(\int_{\Rn}  \exp \bigg( - \frac{M d(x,y)^2}{\tau}\bigg) dy\right)d\tau
\end{align*}
Using now \eqref{exp} we conclude that \eqref{Ptsqrt} does hold.

\end{proof}

Using Proposition \ref{P:sqrtdecay} we can now pass from the $L^2$ convergence in Proposition \ref{P:trace} to a uniform pointwise one, at least in the regime $0<s<1/2$.

\begin{cor}\label{C:trace} 
Let  $0<s<1/2$ and $a = 1-2s$. Given a function $u\in \mathscr S(\Rn)$, with $(-\LL)^s u(x)$ defined as in \eqref{flheat} above, one has for every $x\in \Rn$
\begin{equation}\label{trace2}
- \frac{2^{-a} \Gamma\left(\frac{1-a}2\right)}{\Gamma\left(\frac{1+a}2\right)}  \underset{z\to 0^+}{\lim} \paa U(x,z) = (-\LL)^s u(x).
\end{equation}
\end{cor}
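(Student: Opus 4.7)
The plan is to upgrade the $L^2$ convergence in Proposition \ref{P:trace} to pointwise convergence by going back to the explicit representation derived in the course of its proof. Recall that the calculation there yielded
\begin{align*}
2^{1-a} \Gamma\!\left(\tfrac{1-a}{2}\right) \paa U(x,z) &= (1-a)  \int_0^\infty t^{-\frac{3-a}{2}} e^{-\frac{z^2}{4t}}  [P_t u(x) - u(x)] dt
\\
&\quad - \frac{z^2}{2} \int_0^\infty t^{-\frac{3-a}{2} -1} e^{-\frac{z^2}{4t}}  [P_t u(x) - u(x)] dt,
\end{align*}
where we have used stochastic completeness \eqref{P1} pointwise in $x$. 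In view of \eqref{flheat}, the assertion \eqref{trace2} will follow once we prove the two pointwise limits
\[
\lim_{z\to 0^+} \int_0^\infty t^{-\frac{3-a}{2}} e^{-\frac{z^2}{4t}}  [P_t u(x) - u(x)] dt = \int_0^\infty t^{-\frac{3-a}{2}} [P_t u(x) - u(x)] dt,
\]
\[
\lim_{z\to 0^+} z^2 \int_0^\infty t^{-\frac{3-a}{2} -1} e^{-\frac{z^2}{4t}}  [P_t u(x) - u(x)] dt = 0.
\]

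The first limit is an application of Lebesgue dominated convergence. The pointwise convergence of the integrand is immediate because $e^{-z^2/(4t)}\to 1$ monotonically as $z\to 0^+$, so the task is to produce a single $z$-independent integrable majorant for $t^{-\frac{3-a}{2}} |P_t u(x) - u(x)|$. For large $t$ I will use the trivial bound $|P_t u(x) - u(x)|\le 2\|u\|_{L^\infty(\Rn)}$, which combined with $t^{-\frac{3-a}{2}}$ is integrable on $(1,\infty)$ because $\frac{3-a}{2}>1$. For small $t$ I will invoke Proposition \ref{P:sqrtdecay}, which gives $|P_t u(x) - u(x)|\le C\|\x u\|_{L^\infty}\sqrt t$; this yields the bound $C\, t^{\frac{a-2}{2}}$, and the precise role of the hypothesis $s<1/2$ (i.e., $a>0$) is exactly to ensure integrability of $t^{\frac{a-2}{2}}$ near $t=0$.

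For the second limit I split the integral at $t=1$. The tail $t\ge 1$ is handled by the same trivial bound $|P_t u(x)-u(x)|\le 2\|u\|_{L^\infty}$, producing an overall factor $z^2\to 0$. The delicate piece is the integral over $(0,1)$, where I will again apply Proposition \ref{P:sqrtdecay} and then perform the rescaling $\sigma=z^2/(4t)$ already used in the proofs of Propositions \ref{P:UsolvesextL2} and \ref{P:trace}. A direct computation transforms the expression into $C z^{a}\int_{z^2/4}^{\infty}\sigma^{-a/2} e^{-\sigma} d\sigma$, and since $0<a<1$ this integral is bounded by $\Gamma(1-a/2)<\infty$, so the whole quantity is $O(z^a)\to 0$.

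The main obstacle is entirely about integrability near $t=0$: without the quantitative $\sqrt t$ decay from Proposition \ref{P:sqrtdecay}, one cannot create a dominant, and the condition $a>0$ (equivalently $s<1/2$) is precisely what makes the exponent $\frac{a-2}{2}$ lie in $(-1,-\frac12)$, i.e., still integrable. Everything else is a routine dominated convergence argument combined with the change of variables $\sigma=z^2/(4t)$, so once the majorants and the rescaling are in place the conclusion follows in a few lines, with no further need to revisit the Poisson kernel \eqref{heatPK}.
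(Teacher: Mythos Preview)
Your proposal is correct and follows exactly the route the paper intends: the paper omits the proof of Corollary \ref{C:trace} and only points out that the $\sqrt t$ decay in Proposition \ref{P:sqrtdecay} is what makes the integrand in \eqref{flheat} summable near $t=0$ in the range $0<s<1/2$, and you have correctly carried this through by rerunning the $L^2$ argument of Proposition \ref{P:trace} pointwise, replacing the $L^2$ decay \eqref{ig2} with \eqref{Ptsqrt}. Your dominated convergence majorant and the rescaling $\sigma=z^2/(4t)$ are exactly the same devices used there, and your identification of the condition $a>0$ as the place where $s<1/2$ enters is precisely the observation the paper is highlighting.
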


We omit the proof of Corollary \ref{C:trace}. We only confine ourselves to observe that \eqref{Ptsqrt} now guarantees, for every $x\in \Rn$, the summability of the integrand in the right-hand side of \eqref{flheat} in the range $0<s<1/2$ (notice that there is no issue for $t$ large since for any fixed $x\in \Rn$ one has the trivial bound
\[
|P_t u(x) - u(x)| \le ||P_t u||_{L^\infty(\Rn)} +  ||u||_{L^\infty(\Rn)} \le 2 ||u||_{L^\infty(\Rn)},
\]
by the fact that $P_t$ is sub-Markovian. However, the integrability of $t\to t^{-s-1} \left[P_t u(x) - u(x)\right]$ near $t = 0$ is subtler.   
Although this is verified in a number of situations, the question of convergence in the regime $1/2\le s <1$ is a bit delicate, as one needs a stronger decay in $t$ than that in \eqref{Ptsqrt}. 

Let us provide the reader with some motivation. Suppose that $\LL = \Delta$, the standard Laplacean in $\Rn$. Then, elementary considerations show that
\begin{equation}\label{Ptu}
P_t u(x) - u(x) = \int_{\Rn} G(y,t)[u(x+y) + u(x-y) - 2u(x)] dy,
\end{equation}
where we have indicated with $G(y,t) = (4\pi t)^{-\frac n2} e^{-\frac{|y|^2}{4t}}$ the Gauss-Weierstrass kernel. Having the second difference $u(x+y) + u(x-y) - 2u(x)$ is quite important for improving on \eqref{Ptsqrt}. If $u\in \mathscr S(\Rn)$, applying Taylor's formula with initial point $x$, we obtain for every $y\in \Rn$
\[
|u(x+y) + u(x-y) - 2u(x)| \le C  ||\nabla^2 u||_{L^\infty(\Rn)} |y|^2,
\]
where $C>0$ is universal, and $\nabla^2 u$ indicates the Hessian matrix of $u$.
We now use this information in \eqref{Ptu} in the following way
\begin{align*}
|P_t u(x) - u(x)| & \le \int_{|y|<\sqrt t} G(y,t)|u(x+y) + u(x-y) - 2u(x)| dy
\\
& + \int_{|y|\ge \sqrt t} G(y,t)|u(x+y) + u(x-y) - 2u(x)| dy
\\
& \le C ||\nabla^2 u||_{L^\infty(\Rn)} \left\{t \int_{|y|<\sqrt t} G(y,t) dy + \int_{|y|\ge \sqrt t} |y|^2 G(y,t) dy\right\}. 
\end{align*}
It is now easy to recognize that
\[
\int_{|y|\ge \sqrt t} |y|^2 G(y,t) dy \le C(n) t.
\]
We conclude that we now have
\begin{equation}\label{Pt2}
|P_t u(x) - u(x)| \le C ||\nabla^2 u||_{L^\infty(\Rn)}\  t.
\end{equation}
The improved estimate \eqref{Pt2} does now guarantee the integrability of $t\to t^{-s-1} \left[P_t u(x) - u(x)\right]$ near $t = 0$, thus establishing the validity of Corollary \ref{C:trace} in the whole range $0<s<1$ for the standard Laplacean. 

An improved decay which suffices to deal with the regime $1/2\le s<1$  does hold also in the setting of Carnot groups.  This is a direct consequence of \eqref{decayG} above. Therefore, Corollary \ref{C:trace} also holds in any Carnot group in the whole range $0<s<1$. For the more general operators treated in this note we will address this point in a forthcoming work.

\end{document}